\newtheorem{proposition}{Proposition}
\newtheorem{theorem}[proposition]{Theorem}
\newtheorem{lemma}[proposition]{Lemma}
\newtheorem{corollary}[proposition]{Corollary}
\theoremstyle{remark}
\theoremstyle{definition}
\numberwithin{equation}{section}
\numberwithin{proposition}{section}
\numberwithin{figure}{section}
\numberwithin{table}{section}
\newcommand{\dT}{{d}}
\newcommand{\N}{\mathbb{N}}
\newcommand{\R}{\mathbb{R}}
\newcommand{\E}{\mathbb{E}}
\newcommand{\eps}{\varepsilon}
\renewcommand{\le}{\leqslant}
\renewcommand{\ge}{\geqslant}
\renewcommand{\leq}{\leqslant}
\renewcommand{\geq}{\geqslant}
\renewcommand{\subset}{\subseteq}
\newcommand{\la}{\langle}
\newcommand{\ra}{\rangle}
\DeclareMathOperator{\sign}{sign}
\DeclareMathOperator{\supp}{supp}
\renewcommand{\tilde}{\widetilde}
\newsavebox\myboxA
\newsavebox\myboxB
\newlength\mylenA
\newcommand*\mybar[2][0.75]{%
    \sbox{\myboxA}{$\m@th#2$}%
    \setbox\myboxB\null
    \ht\myboxB=\ht\myboxA%
    \dp\myboxB=\dp\myboxA%
    \wd\myboxB=#1\wd\myboxA
    \sbox\myboxB{$\m@th\overline{\copy\myboxB}$}
    \setlength\mylenA{\the\wd\myboxA}
    \addtolength\mylenA{-\the\wd\myboxB}%
    \ifdim\wd\myboxB<\wd\myboxA%
       \rlap{\hskip 0.5\mylenA\usebox\myboxB}{\usebox\myboxA}%
    \else
        \hskip -0.5\mylenA\rlap{\usebox\myboxA}{\hskip 0.5\mylenA\usebox\myboxB}%
    \fi}
\newcommand{\Var}{\operatorname{Var}}
\newcommand{\e}{{\mathbb{E}}}
\newcommand{\Reals}{\mathbb{R}}
\newcommand{\Natural}{\mathbb{N}}
\newcommand{\bs}{{\mybar{\sigma}}}
\newcommand{\II}{\mathcal{I}}
\newcommand\rrac[2][r]{%
  \ifx r#1 (#2)\else
  \ifx s#1 [#2]\else
  \ifx c#1 \{#2\}\else
  \ifx v#1 |#2|\else
  \ifx a#1 \langle#2\rangle\else  
  \mathrm{Illegal~option}%
  \fi\fi\fi\fi\fi
}
\newcommand\Brac[2][r]{%
  \ifx r#1 \Big(#2\Big)\else
  \ifx s#1 \Big[#2\Big]\else
  \ifx c#1 \Big\{#2\Big\}\else
  \ifx v#1 \Big|#2\Big|\else
  \ifx a#1 \Big\langle#2\Big\rangle\else  
  \mathrm{Illegal~option}%
  \fi\fi\fi\fi\fi
}
\newcommand\brac[2][r]{%
  \ifx r#1 \big(#2\big)\else
  \ifx s#1 \big[#2\big]\else
  \ifx c#1 \big\{#2\big\}\else
   \ifx v#1 \big|#2\big|\else
  \ifx a#1 \big\langle#2\big\rangle\else    
  \mathrm{Illegal~option}%
  \fi\fi\fi\fi\fi
}
\begin{document}

\title[Strong replica symmetry in optimal Bayesian inference]{Strong replica symmetry in\\high-dimensional optimal Bayesian inference}

\author[J. Barbier]{Jean Barbier}
\address[Jean Barbier]{International Center for Theoretical Physics, Trieste, Italy.}
\email{jbarbier@ictp.it}

\author[D. Panchenko]{Dmitry Panchenko}\address[Dmitry Panchenko]{Department of Mathematics, University of Toronto, Ontario, Canada}
\email{panchenk@math.toronto.edu}

\begin{abstract}
We consider generic optimal Bayesian inference, namely, models of signal reconstruction where the posterior distribution and all hyperparameters are known. Under a standard assumption on the concentration of the free energy, we show how replica symmetry in the strong sense of concentration of all multioverlaps can be established as a consequence of the Franz-de Sanctis identities; the identities themselves in the current setting are obtained via a novel perturbation coming from exponentially distributed ``side-observations'' of the signal. Concentration of multioverlaps means that asymptotically the posterior distribution has a particularly simple structure encoded by a random probability measure (or, in the case of binary signal, a non-random probability measure). We believe that such strong control of the model should be key in the study of inference problems with underlying sparse graphical structure (error correcting codes, block models, etc) and, in particular, in the rigorous derivation of replica symmetric formulas for the free energy and mutual information in this context.
\end{abstract}
\date{}
\maketitle

\section{Introduction}

The contributions to the fields of high-dimensional (Bayesian) inference and machine learning coming from the mathematical physics of disordered systems are numerous. This is partly due to the by now well-established deep links between some of the archetypal models of these disciplines. Like the Ising model in physics, or the Sherrington-Kirkpatrick (SK) mean-field spin glass \cite{SKarticle}, a number of paradigmatic models in high-dimensional inference have emerged. Let us mention spiked matrix and tensor models \cite{baik2005phase,johnstone2001distribution,johnstone2004sparse} where a low-rank ``spike'' tensor to be recovered is hidden inside a full-rank noisy tensor. This idealised, yet very rich, probabilistic model of principal component analysis is directly connected to physics. Indeed, it is nothing else than the ``planted'' version of the SK model or, in the tensor case, the planted $p$-spin model\footnote{More precisely the symmetric spiked matrix model, also called spiked Wigner model, is the planted SK. The non-symmetric version, or spiked Wishart model, is the planted bipartite SK. Finally the symmetric spiked tensor model is equivalent to the planted $p$-spin model, and the non-symmetric version to the planted multipartite $p$-spin.}. Another important model is high-dimensional linear and generalised regression, that has applications in signal processing \cite{Donoho10112009,barbier_allerton_RLE,private}, communications \cite{barron2010sparse,barbier2015approximate,rush2015capacity} and machine learning \cite{barbier2017phase}. This is the planted version of (generalisations of) the famous ``perceptron'' model of statistical physics \cite{gardner1988optimal}. Optimal Bayesian inference models --optimal meaning that the true posterior is known-- are therefore generically equivalent to planted spin glasses, or, said differently, spin glasses living on their ``Nishimori line'', a peculiar region of the phase diagram on which deep identities force replica symmetry \cite{nishimori01,contucci2009spin}. 

These models have been solved in the sense of rigorously demonstrating the validity of ``replica symmetric formulas'' for the asymptotic mutual information (or free energy in physics terms) \cite{MezardParisi87b,MezardMontanari09} thanks to a combination of methods from spin glass physics, or information-theoretic and algorithmic techniques as in \cite{dia2016mutual,barbier_allerton_RLE,barbier2017mutual,private}. In particular two main proof schemes have emerged: a combination of the \emph{cavity method} \cite{MezardParisi87b} (or ``Aizenman-Sims-Starr scheme'' \cite{aizenman2003extended,Talagrand2011spina,Talagrand2011spinb,SKmodel}) and the canonical Guerra-Toninelli \emph{interpolation method} \cite{guerra2002thermodynamic,Guerra-2003} (inspired by a frequent use of interpolations in earlier works of Talagrand), used, e.g., to solve the spiked tensor models in \cite{2016arXiv161103888L,miolane2017fundamental,lesieur2017statistical}. See also \cite{el2018estimation,chen2019phase,chen2018phase,perry2018optimality,perry2020statistical} for related results. Another more recent proof strategy is an evolution of the interpolation method specifically tailored for optimal Bayesian inference problems, coined \emph{adaptive interpolation method} \cite{BarbierM17a,BarbierMacris2019}, and that has proven to be one of the simplest and most versatile technique for proving replica symmetric formulas in this context \cite{barbier2017phase,2017arXiv170910368B,RLEStructuredMatrices,NIPS2018_7453,NIPS2018_7584,barbier20190}. 

These two classes of models, namely spiked tensor estimation and regression, both possess an underlying dense graphical structure, with each ``spin'' interacting with all the others. Another important class of mean-field inference models are sparsely connected graphical models (or ``dilute models''). This includes sparse graphs error-correcting codes such as low-density parity check (LDPC) and generator matrix (LDGM) codes \cite{richardson2008modern} (the latter being the planted sparse $p$-spin model), planted combinatorial optimisation problems (random $K$-satisfiability, coloring, etc) \cite{krzakala2009hiding,coja2018information}, or models of community detection such as the stochastic and censored block models, see \cite{decelle2011asymptotic,abbe2017community} and references therein. The two proof schemes mentionned earlier extend to the sparse setting, with some new complications due to the additional layer of disorder of the graph; see \cite{franz2003replica,Franz-Leone-Toninelli,Panchenko-Talagrand-2004,coja2018information} for the extension of the canonical interpolation to sparse graphs and \cite{RSKsat,1RSB,finiteRSB,coja2018information} for the cavity method, as well as \cite{eric-censor-block} for the adaptive interpolation.

In all these works, the proofs are based in some way or another on the rigorous control of the order parameter of the model under consideration, generally in the form of an overlap between conditionally independent samples of the posterior (Gibbs) measure of the model (or ``replicas''), and/or between a sample and the planted ground-truth signal. Optimal Bayesian inference is an ubiquitous setting in the sense that the overlap can be shown to concentrate in the whole regime of parameters (amplitude of the noise, number of data points divided by the number of parameters to infer, etc). 
When the overlap is self-averaging, which is the case in optimal Bayesian inference \cite{barbier2019overlap}, spin glass models at high temperature \cite{Talagrand2011spina}, or ferromagnetic models \cite{chatterjee2015absence,2019arXiv190106521B}, one expects {\it replica symmetric} variational formulas for the asymptotic free energy or mutual information density, as was understood in the eighties by the physicists (and in mathematical literature in the nineties \cite{Pastur-Shcherbina-1991,Pastur-Shcherbina-Tirozzi-1994,talagrand1998sherrington}). Actually in the physics literature replica symmetry is generally the term used to precisely mean that the order parameter concentrates. This is in contrast with models where the overlap is not self-averaging, like in spin glasses at low temperature or combinatorial optimisation problems at high constraint density, which leads to more complicated formulas for the free energy computed using Parisi's replica symmetry breaking scheme \cite{Parisi-1979,Parisi-1980,Parisi-1983,MezardParisi87b,Talagrand-annals-2006, Talagrand2011spina,Talagrand2011spinb,SKmodel, SGpotts, SGvector} and the M\'ezard-Parisi ansatz \cite{MPbethe, MPZdiluted, MezardMontanari09}. 

In the present contribution, we prove that in optimal Bayesian inference replica symmetry holds in a \emph{strong} sense: all multioverlaps, namely overlaps between arbitrarily many replicas, do concentrate both with respect to the Gibbs measure and with respect to the disorder of the model. This key structural property is particularly important for dilute inference models. Indeed in densely connected models the physics is generally controlled by the usual overlap. But in sparse models, the additional source of disorder stemming from the graph (resulting in a local dependence of the cavity fields) implies that the whole series of multioverlaps matters, in particular for proving replica symmetric formulas. At a fully rigorous level, multioverlaps and the related notion of ``correlation decay'' \cite{krzakala2007gibbs} have been put under control in few situations, namely constraint satisfaction problems in \cite{talagrand2001high,montanari2006counting,RSKsat,bapst2016harnessing,coja2018charting,coja2018replica} that treat sub-regions of the phase diagram corresponding to ``high-temprature'' or ``low constraints density'' where replica symmetry holds, or ferromagnetic models in the whole phase diagram \cite{2019arXiv190106521B} (using very different techniques relying on the ferromagnetic nature of the models, and that therefore cannot be exported to the present setting). 

One important contribution where multioverlaps were studied, and that is of particular relevance for the present work, is the paper of Franz and de Sanctis \cite{FdS} (with some ideas already found in \cite{Franz-Leone-Toninelli}), where some analogues for the multioverlaps of the Ghirlanda-Guerra \cite{GG,panchenko2010ghirlanda} and Aizenman-Contucci identities \cite{ACident} for the usual overlap were derived, partly heuristically (see also follow-up works of \cite{barra2007stability,sollich2012spin}). Thanks to a new type of perturbation adapted to inference and inspired by \cite{FdS}, we manage here to prove that in optimal Bayesian inference the usual notion of replica symmetry, that is concentration of the overlap, induces strong replica symmetry, namely concentration of \emph{all} multioverlaps with respect to \emph{all} the randomness in the model, and this in the \emph{whole phase diagram} (an implication also exploited in the replica symmetric sub-region of constraint satisfaction problems in \cite{bapst2016harnessing,coja2018charting,coja2018replica}).

\section{Setting and main results}

\subsection{High-dimensional optimal Bayesian inference: base model}

Consider a ground-truth ``signal'' $\sigma^*=(\sigma_i^*)_{i\le N}$ generated probabilistically from a family $(P_N^*)_{N\ge 1}$ of \emph{factorised} (product) prior distributions that may depend on hyperparameter $(\theta^*_N)_{N\ge 1}$,
\begin{align}\label{ProductM}
\sigma^* \sim P_N^*(\,\cdot \mid \theta^*_N)=\prod_{i\le N} P_i^*(\,\cdot\mid\theta^*_N) \, ,
\end{align}
supported on $\Sigma^N$ for some bounded set $\Sigma \subset \Reals$. Data $Y=Y(\sigma^*)$ is generated conditionally on the unknown signal $\sigma^*$ and possibly an hyperparameter $\theta_{{\rm out},N}$:
\begin{equation}
Y \sim P_{{\rm out},N}(\,\cdot \mid \sigma^*,\theta_{{\rm out},N})\,.
\end{equation}
This formulation is very generic and the (real) data and hyperparameters can be vectors, tensors etc. The conditional distribution $P_{{\rm out},N}(\,\cdot \mid \sigma^*,\theta_{{\rm out},N})$ is called likelihood, or ``output channel''. In general the hyperparameters can also be random, with respective probability distributions $P_{\theta^*,N}$ and $P_{\theta_{\rm out},N}$. Of course this setting includes the case where some fixed number of  hyperparameters are set to specific values (i.e., are not random).

The inference task is to recover the signal $\sigma^*$ as accurately as possible given the data $Y$. We moreover assume that, given $N$, the hyperparameters $\theta_N:=(\theta^*_N, \theta_{{\rm out},N})$, the likelihood $P_{{\rm out},N}$ and the prior $P^*_N$ are known to the statistician that can therefore write down the correct posterior of the model, and call this setting {\it optimal} Bayesian inference.

For the sake of readability, from now on we will drop the size index $N$ for the aforementioned distributions and hyperparameters, but the reader should keep in mind that an increasing $N$ will construct an infinite famility of models.

Employing the language of statistical mechanics we define the base {\it Hamiltonian} $\mathcal{H}_{N}(\sigma)=\mathcal{H}_{N}(\sigma,Y,\theta_{\rm out})$ as the log-likelihood:
\begin{align}
\mathcal{H}_{N}(\sigma):= \ln P_{\rm out}(Y \mid \sigma,\theta_{\rm out})\,.
\end{align}
Then the (random) posterior distribution,  or ``Gibbs measure'' of the Bayesian inference model, is expressed using Bayes' formula:
\begin{align}
G_{N}(d\sigma):=\mathbb{P}(\sigma^*\in d\sigma \mid Y,\theta)&= \frac{1}{\mathcal{Z}_{N}(Y,\theta)} P^*(d\sigma \mid \theta^*)\exp\mathcal{H}_{N}(\sigma)\,. \label{post}
\end{align}
Note that we use the convention of having a $+$ sign in front of the Hamiltonian while there is usually a $-$ sign in statistical mechanics. The normalisation constant $\mathcal{Z}_{N}(Y,\theta):=P(Y \mid \theta)=\int P^*(d\sigma \mid \theta^*)P_{\rm out}(Y \mid \sigma,\theta_{\rm out})$ of the posterior is the {\it partition function} of the base inference model. This is the marginal distribution of the data and is called the ``evidence'' in Bayesian inference.

In addition of \eqref{ProductM} a second assumption required for our results to hold is the \emph{symmetry among spins}. This means that the random posterior (which is random through its dependence on $(\theta,\sigma^*,Y)$) is invariant in distribution under permutation of spins. Namely we assume that for any permutation $\rho$ of spin indices $\rho(\sigma):=(\sigma_{\rho(i)})_{i\le N}$,
\begin{align}
\mathbb{P}(\sigma^*\in d\sigma\mid Y,\theta) \,\overset{{\rm d}}{=}\, \mathbb{P}(\rho(\sigma^*)\in d\sigma\mid Y,\theta) \,.\label{spin_exchan} 
\end{align}
This assumption is standard in statistical mechanics models, but also in the analysis of inference in the high-dimensional regime, see the references in the next section. For example, at the level of the ``prior per component'' $P_i^*(\,\cdot\mid\theta^*_N)$, it means that it can be random, e.g. through its dependence in a single component of its random hyperparameter (in which case it could be written instead as $P_i^*(\,\cdot\mid\theta^*_N)=p^*(\,\cdot\mid(\theta^*_N)_i)$ for some $p^*$), but they must have same law for all $i\le N$. This is similar to having random external magnetic fields with same law in a spin model. 

Finally the {\it free entropy} (or minus {\it free energy}) is the averaged log-partition function:
\begin{align*}
F_{N}:= \ln\mathcal{Z}_{N}(Y,\theta) =\ln  \int P^*(d\sigma\mid\theta^*)\exp\mathcal{H}_{N}(\sigma)\,, \quad \text{with expectation} \quad \E F_{N}\,.
\end{align*}
The average $\E=\E_{\theta}\E_{\sigma^*\mid\theta^*}\E_{Y\mid \sigma^*,\theta_{\rm out}}$ is over the randomness of $(\theta,\sigma^*,Y)$. These are jointly called {\it quenched variables} as they are fixed by the realisation of the problem, in contrast with the dynamical variable $\sigma$ which fluctuates according to the posterior distribution. The averaged free entropy is minus the Shannon entropy of the evidence $P(Y\mid\theta)=\mathcal{Z}_{N}(Y,\theta)$, namely $-\e F_{N}=H(Y\mid\theta)$. Therefore it relates to the mutual information between the observations and the signal through
\begin{align*}
  I(\sigma^*;Y\mid \theta)= -\e F_{N}-H(Y\mid \sigma^*,\theta)\,.
\end{align*}
The conditional entropy $H(Y \mid \sigma^*,\theta)$ is often ``trivial'' to compute, while $\e F_{N}$ is not. The mutual information is one of the main information-theoretic quantities of interest as it contains the location of possible phase transitions in the inference problem, corresponding to its non-analyticities as a function of parameters of the problem such as the noise level or the amount of accessible data. It sometimes also allows to derive the optimal value of important error metrics, such as the minimum mean-square error through the I-MMSE relation \cite{GuoShamaiVerdu_IMMSE}, and therefore to establish fundamental limits to the quality of inference.

\subsection{Examples}\label{sec:examples}
Let us provide some examples of models that fall under the setting of optimal Bayesian inference described in the previous section. In these models, all assumptions we require, namely spin exchangability \eqref{spin_exchan} and, later, the concentration of a ``perturbed free'' quantified by \eqref{vNfreeEnergy}, can be verified at least in settings with $i)$ independent signal entries $(\sigma_i^*)$, $ii)$ conditionally (on the signal) independent data points, and $iii)$ hyperparameters $\theta$ with a law factorized over its components; e.g., this is often called the ``random design'' setting in the context of high-dimensional regression (as in the generalised linear model \eqref{ex:GLM}).

In the symmetric order-$p$ rank-$1$ tensor factorization problem, the data-tensor $Y=(Y_{i_1\ldots i_p})$ is generated through the observation model 
\begin{align}
Y_{i_1\ldots i_p}\sim \mathcal{N}\big(N^{\frac{1-p}{2}}\, \sigma^*_{i_1}\sigma^*_{i_2}\ldots \sigma^*_{i_p } ,1\big)\,, \qquad
1\le i_1\le i_2\le \ldots \le i_p\le N\,,\label{ex:tensorFacto}
\end{align}
i.i.d. conditionally on $\sigma^*$. The case $p=2$ is the Wigner spike model, or low-rank matrix factorization, and is one of the simplest probabilistic model for principal component analysis. 

Another model is the following generalized linear model:
\begin{align}
Y_\mu \sim p_{\rm out}\Big(\,\cdot\, \Big|\, \sum_{i\le N} \theta^{\rm out}_{\mu i}\sigma^*_{i}\Big), \qquad 1\le\mu\le M\,.\label{ex:GLM}
\end{align}
The $M$ observations are i.i.d. given $\mathbb{R}^{M\times N}\ni\theta_{\rm out}=(\theta_{\mu}^{\rm out})_{\mu\le M}$ and $\sigma^*$; this is the reason for the notation $p_{\rm out}$ instead of $P_{\rm out}$, the latter representing the full likelihood while the former is the conditional distribution of a single data point, i.e., $P_{\rm out}(\,\cdot\mid \theta_{\rm out} \sigma^*)=\otimes_{\mu\le M}\, p_{\rm out}(\,\cdot \mid \theta_{\mu}^{\rm out} \cdot \sigma^*)$. A particular simple deterministic case is 
\begin{align}
Y_\mu ={\sign}\sum_{i\le N} \theta^{\rm out}_{\mu i}\sigma^*_{i}\,, \qquad 1\le\mu\le M\,. \label{committee}
\end{align}
This model is the known as the perceptron neural network, see \cite{barbier2017phase}. Here $(\sigma^*_{i})_{i\le N}$ can be interpreted as the weights of a single neuron, and $(\theta_{\mu}^{\rm out})$ are $N$-dimensional data points used to generate the labels $(Y_\mu)$. The teacher-student scenario in which our results apply corresponds to the following: the teacher network \eqref{committee} (or \eqref{ex:GLM} in general) generates $Y$ from the data $\theta_{\rm out}$. The pairs $(Y_{\mu}, \theta_{\mu}^{\rm out})_{\mu\le M}$ are then used in order to train (i.e., learn the weights of) a student network with exactly the same architecture.

A richer example is a multi-layer version of the GLM above:
\begin{align}\label{ex:multiLayer}
\begin{cases}
X^{(L)}_{i_L} \sim p_{\rm out}^{(L)}\big(\,\cdot\, \big| \sum_{j\le N_{L-1}} \theta_{i_L j}^{(L)}X_{j}^{(L-1)}\big)\,, &1\le i_L\le N_L\,,\\
X_{i_{L-1}}^{(L-1)}\sim  p_{\rm out}^{(L-1)}\big(\,\cdot\, \big| \sum_{j\le N_{L-2}} \theta_{i_{L-1} j}^{(L-1)}X_{j}^{(L-2)}\big)\,, &1\le i_{L-1}\le N_{L-1}\,,\\
\qquad\qquad\qquad\qquad\vdots\\
X_{i_1}^{(1)}\sim  p_{\rm out}^{(1)}\big(\,\cdot \,\big| \sum_{j\le N} \theta_{i_1 j}^{(1)}\sigma^*_{j}\big)\,,  &1\le i_1\le N_1\,.  
\end{cases}
\end{align}
with an input $\sigma^*$ with independent entries. In this model $(X^{(\ell)})_{\ell=1}^{L-1}$ represent intermediate hidden variables, the visible variable $X^{(L)}= Y$ is the data, and $\theta_{\rm out}=(\theta^{(\ell)})_{\ell \le L}$ with $\theta^{(\ell)}$ is the weight matrix at the $\ell$-th layer. Note that in the single layer version \eqref{ex:GLM}, $\theta_{\rm out}$ was instead interpreted as data points and $\sigma^*$ was the weight vector to learn. Also $N_\ell=\Theta(N)$ for $\ell=1,\ldots,L$. This scaling for the variables sizes is often assumed in order not to make the inference of $\sigma^*$ from $Y$ impossible nor trivial. This multi-layer GLM has been studied by various authors \cite{manoel_multi-layer_2017,reeves_additivity_2017,fletcher_inference_2017,Gabrie:NIPS2018,DBLP:journals/corr/abs-1903-01293}. 

Another example could be another combination of complex statistical models such as, e.g., the following symmetric matrix factorization problem where the hidden low-rank representation $X$ of the matrix is itself generated from a generalized linear model over a more primitive signal $\sigma^*$:
\begin{align}\label{ex:mix}
\begin{cases}
Y_{ij}=(\lambda/N)^{1/2}\, X_{i}X_{j}+Z_{ij}\,,  &1\le i\le j\le N\,,\\
X_i \sim p_{\rm out}\big(\,\cdot\, \big| \sum_{j\le N} \theta^{\rm out}_{i j}\sigma^*_{j}\big)\,, &1\le i\le N\,. 
\end{cases}
\end{align}
Such model has recently been studied in \cite{aubin2019spiked}.

\subsection{The Ising spins case: perturbed model and multioverlaps concentration} 
The case of Ising spins $\sigma_i^*\in\Sigma=\{-1,1\}$ is simpler and we will consider it first before going to soft spins $\sigma_i^*\in[-1,1]$ in Section~\ref{sec:softspins_results}. For binary spins we can parametrise the prior in terms of ``external magnetic fields'' and write the concrete representation of the product measure \eqref{ProductM} as
\begin{equation}
P^*(\sigma^*\mid \theta^*) \sim \exp\sum_{i\le N} \theta_i^*\sigma_i^*\,,
\label{ProductMising}
\end{equation}
where $\sim$ means here equality up to a normalization constant.

\subsubsection*{Perturbed model} Computing the mutual information crucially relies on understanding the structural properties of the Gibbs measure $G_{N}$, which may be a daunting task without a bit of help. One of the most important ideas that have emerged in the study of such systems (and related spin glass models) is that one can often slightly modify the model in a way that does not affect the free entropy per variable in the thermodynamic $N\to+\infty$ limit but, at the same time, enforces ``good structural properties'' of the \emph{perturbed Gibbs measure}. This idea is not new: for example in the fully connected ferromagnetic Ising model without external magnetic field, the non-physical $0$ magnetisation solution of the mean-field free entropy potential function present below the critical temperature due to the up-down symmetry is supressed by introducing a small external magnetic field that ``selects'' a physical solution with non-vanishing magnetisation. This field is then removed after taking the thermodynamic limit, yielding the correct result for the free entropy at zero field.

In the context of spin glasses things are more subtle as (exponentially abundant) solutions to the mean-field equations are not related to such simple symmetries that can be ``broken by hand''. But yet ``good structural properties'' can be obtained thanks to perturbations (usually of the mixed $p$-spin type) that, e.g., translate into the so-called Aizenman-Contucci identities \cite{ACident} and Ghirlanda-Guerra identities \cite{GG,panchenko2010ghirlanda}, and then ultrametricity \cite{AAultra,panchenko2010connection,panchenko2013parisi}, two crucial ingredients in the proof of the free energy formula for the Sherrington-Kirkpatrick model in \cite{panchenko2013PF} (although the original proof by Talagrand \cite{Talagrand-annals-2006} found a way around this). In the context of high-dimensional Bayesian inference, an idea developed in \cite{macris2007griffith,MacrisKudekar2009,KoradaMacris_CDMA} (see also \cite{andrea2008estimating,coja2018information} for later modifications) is to add a noisy side gaussian channel with signal-to-noise ratio $\lambda_0$:
\begin{align*}
Y^{\rm gauss}\sim\mathcal{N}\big(\sqrt{\lambda_0\eps_N}\sigma^*,{\rm I}_N\big)\,, \quad \text{or equivalently}\quad Y^{\rm gauss} = \sqrt{\lambda_0\eps_N}\sigma^*+Z
\end{align*}
where $Z\sim \mathcal{N}(0,{\rm I}_N)$. This ``side-information'' modifies the posterior and results in an extra term in the Hamiltonian of the form (here $\cdot$ is the usual inner product between vectors)
\begin{equation}
\mathcal{H}^{\rm gauss}_N(\sigma,\lambda_0)=\mathcal{H}^{\rm gauss}_N(\sigma,\lambda_0,Y^{\rm gauss}(\sigma^*,Z),\eps_N):=
\lambda_0\eps_N\sigma^*\cdot \sigma + \sqrt{\lambda_0\eps_N}Z\cdot \sigma-\frac{1}{2}\lambda_0\eps_N\|\sigma\|^2\,, 
\label{OverPert}
\end{equation}
which corresponds to only keeping the $\sigma$-dependent terms in $-\frac12\|Y^{\rm gauss}-\sqrt{\lambda_0\eps_N} \sigma\|_2^2$ (note that the last term could be simplified too as $\|\sigma\|^2=N$ for binary spins, but for the soft spins case it must be included). Here the perturbation parameter $\lambda_0\in [1/2,1]$ and 
\begin{align}\label{scalingepsN}
1\ge \eps_N\to 0 \qquad \text{and} \qquad N\eps_N\to +\infty\,.  
\end{align}
The first condition implies that this Hamiltonian does not affect the free entropy per variable in the large $N$ limit and, under some assumptions on the model that we will state shortly, the usual two-replicas overlap (see definition below) concentrates on average over $\lambda_0\sim \mathcal{U}[1/2,1]$. The second condition enforces the perturbation to be ``strong enough'' to force overlap concentration.

However, our aim will be to show that one can force \emph{all} multioverlaps to concentrate. To this end, in the binary spin case $\sigma_i\in \{-1,1\}$, we introduce a novel type of side-information coming from an ``exponential channel'', namely the extra observations are drawn according an exponential distribution whose mean depends on the signal. To be precise, let $s_N$ be a positive sequence verifying
\begin{align}
 \frac{s_N}{N}\to 0 \qquad \text{and} \qquad \frac{s_N}{\sqrt{N}}\to +\infty  \,.\label{sNcond}
\end{align}
These conditions have the same purpose as the previous conditions \eqref{scalingepsN} for $\eps_N$. Given this sequence draw i.i.d. Poisson numbers $\pi_k\sim {\rm Poiss}(s_N)$ as well as i.i.d. random indices $i_{jk}\sim \mathcal{U}\{1,\ldots,N\}$ for $j\le \pi_k$ and $k\ge 1$. Denote the exponential probability density function of mean $\gamma$ as ${\rm Exp}(\gamma)$, namely the density of $X\sim {\rm Exp}(\gamma)$ is $\gamma\exp\{-\gamma x\}$ for $x\ge 0$. The side-information $Y^{\rm exp}=(Y^{\rm exp}_{jk})$ are, conditionally on $\sigma^*$, i.i.d. observations 
\begin{align}
 Y^{\rm exp}_{jk}\sim {\rm Exp}(1+\lambda_k\sigma^*_{i_{jk}}) \quad \text{for} \quad j\le \pi_k, \  k\ge 1\,. \label{Exp_channel}
\end{align}
Here $\lambda=(\lambda_k)_{k\geq 0}$ where $\lambda_k\sim  \mathcal{U}[2^{-k-1},2^{-k}]$ will be our ``averaging perturbation parameters'' and control the signal strength. Note the following scaling property of exponentially distributed random variables: if $X\sim {\rm Exp}(\gamma)$ then $X=a/\gamma$ for some $a\sim {\rm Exp}(1)$. This allows to introduce exponentially distributed i.i.d. ``noise variables'' $\xi=(\xi_{jk})$ that will sometimes be more convenient to work with than the actual observations:
\begin{align}
Y^{\rm exp}_{jk}=\frac{\xi_{jk}}{1+\lambda_k\sigma^*_{i_{jk}}} \quad \text{with}\quad  \xi_{jk}\sim{\rm Exp}(1)  \quad \text{for} \quad j\le \pi_k, \  k\ge 1\,.
\end{align}
These obervations yield another extra perturbation term in the Hamiltonian that takes the form
\begin{align}
 \mathcal{H}^{\rm exp}_N(\sigma,\lambda)=\mathcal{H}^{\rm exp}_N(\sigma,\lambda,Y^{\rm exp}(\sigma^*,\xi),\pi,(i_{jk})):=\sum_{k\ge 1}\sum_{j\le \pi_k} \Big(\ln(1+\lambda_k\sigma_{i_{jk}})-\frac{\lambda_k \xi_{jk} \sigma_{i_{jk}}}{1+\lambda_k\sigma^*_{i_{jk}}}\Big)\,, \label{exp_pertu}
\end{align}
which corresponds to the log-likelihood of the exponential observations re-expressed in terms of the signal and noise (up to irrelevant $\sigma$-independent terms that simplify with the normalisation).

Denote the set of all data and hyperparameters $\mathcal{S}_N:=\{W,\theta,\eps_N,\lambda,\pi,(i_{jk}),s_N\}$ for the perturbed inference problem, where $W:=(Y,Y^{\rm gauss},Y^{\rm exp})$ is the whole accessible data and recall $\theta:=(\theta^*,\theta_{\rm out})$.
Our proof will crucially rely on a set of important identities, called ``Nishimori identities'', that are only valid in the Bayes \emph{optimal} setting where $\mathcal{S}_N$ is assumed to be known, so that the posterior used for inference is the correct one. The notation $\E$ will be used for an average with respect to the quenched random variables $(\sigma^*,W,\theta,\pi,(i_{jk}))$ appearing in the ensuing expression, or equivalently when working with the independent noise variables, with respect to $(\sigma^*,Y,Z,\xi,\theta,\pi,(i_{jk}))$. The perturbation parameters are always considered \emph{fixed} if not explicitly averaged over using $\E_{\lambda}$. 

Together \eqref{OverPert} and \eqref{exp_pertu} result in a perturbed model with the Hamiltonian
\begin{equation}
\mathcal{H}_N^{\mathrm{pert}}(\sigma,\lambda):= \mathcal{H}_N(\sigma)+\mathcal{H}^{\rm gauss}_N(\sigma,\lambda_0) + \mathcal{H}^{\rm exp}_N(\sigma,\lambda)\,.
\label{HNpert}
\end{equation}
The associated Gibbs measure for the perturbed inference model, which is a proper Bayes optimal posterior distribution, reads
\begin{align}
G_{N}^{\rm pert}(\sigma,\lambda)=\mathbb{P}_\lambda(\sigma^*=\sigma \mid \mathcal{S}_N):= \frac{1}{\mathcal{Z}_{N}^{\rm pert}(\mathcal{S}_N)} \exp\Big\{\sum_{i \le N} \theta_i^*\sigma_i+\mathcal{H}_N^{\mathrm{pert}}(\sigma,\lambda)\Big\} \quad \text{for} \quad \sigma\in\{-1,1\}^N\,. \label{Gibbs}
\end{align}
The random perturbed posterior measure still verifies the spin symmetry, or symmetry among sites, i.e., for any permutation $\rho$ of spin indices 
\begin{align}
G_{N}^{\rm pert}(\sigma,\lambda) \,\overset{{\rm d}}{=}\, G_{N}^{\rm pert}(\rho(\sigma),\lambda) \,. \label{spinSymm}
\end{align}
We will use the notation $\sigma^\ell$, $\ell\geq 1$, for conditionally i.i.d. samples from $G_N^{\rm pert}(\,\cdot\,,\lambda)$, also called ``replicas''. As usual in statistical mechanics we denote with a bracket $\la\,\cdot\,\ra$ the average with respect to the product measure $G_N^{{\rm pert}}(\,\cdot\,,\lambda)^{\otimes \infty}$ acting on replicas, 
\begin{align}\label{def:bracket}
  \big\la A((\sigma^{\ell})_{\ell\in \mathcal{C}})\big\ra := \sum_{(\sigma^\ell)_{\ell \in \mathcal{C}}} A((\sigma^{\ell})_{\ell\in \mathcal{C}}) \prod_{\ell\in \mathcal{C}} G_N^{\rm pert}(\sigma^{\ell},\lambda) 
\end{align}
for a generic finite set of replica indices $\mathcal{C}$.
The above sum is over the hypercube $\{-1,1\}^{N\times |\mathcal{C}| }$. We will sometimes make the dependence on $\lambda$ explicit in the notation and write $\la\,\cdot\,\ra_{\lambda}$. 

Finally the average free entropy is
\begin{equation}
\E F_N^{\rm pert}(\lambda):= \E \ln \mathcal{Z}_{N}^{\rm pert}= \E \ln \sum_{\sigma\in\{-1,1\}^N}\exp\Big\{\sum_{i\le N} \theta_i^*\sigma_i+\mathcal{H}_N^{\mathrm{pert}}(\sigma,\lambda)\Big\}\,.
\label{pertFreeEnergy}
\end{equation}
It is not affected by the perturbation terms, that are smaller order (see the Appendix for a proof):
\begin{align}
   \frac1N|\mathbb{E} F_N^{\rm pert}(\lambda)-\mathbb{E}F_N| \le \frac{\eps_N}2 +\frac{6s_N}{N}\to 0\,. \label{pertIsnonPert}
\end{align}

\subsubsection*{Main results for Ising spins} The main quantities of interest are the \emph{multioverlaps}, which generalise the usual two-replicas (Edwards-Anderson) overlap order parameter in spin glasses:
\begin{equation}
R_{\ell_1,\ldots,\ell_n}:= \frac{1}{N}\sum_{i\le N}\sigma_i^{\ell_1}\ldots\sigma_i^{\ell_n}\,.\label{multioverlaps}
\end{equation}
When a single replica appears in some expression and no confusion can arise we simply denote it $$\sigma=\sigma^1\,.$$ Before discussing multioverlaps we recall the following by now classical result (proven in the next section for completeness). We use the compact notation $\e(\cdots)^2:=\e[(\cdots)^2]\ge (\e(\cdots))^2$. Let 
\begin{equation}
N v_N:=\sup\Big\{\e\big(F_N^{\rm pert}(\lambda)-\e F_N^{\rm pert}(\lambda)\big)^2 \, :\,  \lambda_k\in [2^{-k-1},2^{-k}] \ \text{for} \ k\geq 0\Big\}\,. \label{vNfreeEnergy}
\end{equation}
In situations where the same types of assumptions are assumed as in the present paper (mainly the spin exchangability \eqref{spin_exchan}, such as in all the references mentioned in Section~\ref{sec:examples}), $v_N$ can be upper bounded by a constant independent of $N$. The following holds (here it is not important that the spins are binary, only bounded suffices; in the case of soft spins the Gibbs average $\la \, \cdot\, \ra$ corresponds to the measure defined below by \eqref{bracket_soft}).
\begin{theorem}[Overlap concentration for bounded spins]\label{OverlapConc}Suppose that $\supp(P^*)\subseteq [-1,1]^N$. Let $\lambda_0\sim \mathcal{U}[1/2,1]$. There exists an absolute constant $C>0$ such that
\begin{equation}
\e_{\lambda_0}\e\big\la (R_{1,2} - \e\la R_{1,2} \ra)^2\big\ra
\leq \frac{C}{\eps_N}\Big(\frac{v_N}{N\eps_N}+\frac{1}{N}\Big)^{1/3}\,.
\label{Oconcentration}
\end{equation}
\end{theorem}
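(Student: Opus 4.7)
The plan is to use the classical thermal/disorder decomposition of the total overlap variance and bound each piece using the Gaussian perturbation with parameter $\lambda_0$:
$$
\mathbb{E}\big\langle (R_{1,2} - \mathbb{E}\langle R_{1,2}\rangle)^2\big\rangle = \mathbb{E}\big\langle (R_{1,2} - \langle R_{1,2}\rangle)^2\big\rangle + \mathbb{E}\big(\langle R_{1,2}\rangle - \mathbb{E}\langle R_{1,2}\rangle\big)^2.
$$
A direct computation of $\partial_{\lambda_0}\mathcal{H}^{\rm gauss}_N$, followed by Gaussian integration by parts on the $Z$-dependent term and the Nishimori identity $\mathbb{E}\langle \sigma\cdot\sigma^*\rangle = \mathbb{E}\|\langle\sigma\rangle\|^2 = N\mathbb{E}\langle R_{1,2}\rangle$, identifies $\frac{1}{N\epsilon_N}\partial_{\lambda_0}\mathbb{E}F_N^{\rm pert}(\lambda_0)$ with a constant multiple of $\mathbb{E}\langle R_{1,2}\rangle$; the second derivative in $\lambda_0$ is, up to a factor $N^2\epsilon_N^2$ and lower-order corrections, the thermal (Gibbs) variance of $R_{1,2}$.

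For the thermal piece, I would exploit convexity of $F_N^{\rm pert}(\lambda_0)$ in $\lambda_0$ and the fact that its first derivative is at most of order $N\epsilon_N$ (because $|\sigma_i|, |\sigma_i^*|\le 1$). Integrating once,
$$
\int_{1/2}^1 \partial_{\lambda_0}^2 \mathbb{E}F_N^{\rm pert}(\lambda_0)\, d\lambda_0 = \partial_{\lambda_0}\mathbb{E}F_N^{\rm pert}\big|_{1/2}^1 = \mathcal{O}(N\epsilon_N),
$$
which after normalization translates to $\mathbb{E}_{\lambda_0}\mathbb{E}\langle (R_{1,2} - \langle R_{1,2}\rangle)^2\rangle \leq C/(N\epsilon_N)$.

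The disorder piece is the subtler step. The hypothesis \eqref{vNfreeEnergy} says $\mathbb{E}(F_N^{\rm pert}(\lambda_0) - \mathbb{E}F_N^{\rm pert}(\lambda_0))^2 \leq N v_N$, and one wants to deduce concentration of the derivative (which is, up to the Gaussian-IBP computation, $\epsilon_N\langle R_{1,*}\rangle$). The standard device is: for any random convex function $\phi$ with deterministic convex envelope $\Phi=\mathbb{E}\phi$, a three-point inequality
$$
\phi'(x)-\Phi'(x) \,\le\, \tfrac1\eta\big[(\phi-\Phi)(x+\eta)-(\phi-\Phi)(x-\eta)\big] + \big(\Phi'(x+\eta)-\Phi'(x-\eta)\big)
$$
and an analogous lower bound. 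Apply this to $\phi(\lambda_0)=F_N^{\rm pert}(\lambda_0)/(N\epsilon_N)$: squaring, averaging in $\lambda_0\in[1/2,1]$, using the hypothesis to control the first bracket by $v_N/(N\epsilon_N^2\eta^2)$, and using the thermal bound already proven (plus the trivial $O(1)$ bound coming from $\|\sigma\|^2\le N$) to control the integrated second derivative term by $C\eta/\epsilon_N$, one obtains after optimizing in $\eta$ a $1/3$-power bound of the form
$$
\mathbb{E}_{\lambda_0}\mathbb{E}(\langle R_{1,2}\rangle - \mathbb{E}\langle R_{1,2}\rangle)^2 \leq \frac{C}{\epsilon_N}\Big(\frac{v_N}{N\epsilon_N} + \frac{1}{N}\Big)^{1/3}.
$$

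Summing the two bounds gives \eqref{Oconcentration}. The main obstacle is the disorder step: one must pass from concentration of a convex function (on the scale $\sqrt{N v_N}$) to concentration of its derivative using only information from neighbouring points. The thermal step, by contrast, is routine once the integration by parts and Nishimori identities have been set up. A secondary technical point is keeping track of the lower-order corrections when identifying $\partial_{\lambda_0}^2\mathbb{E}F_N^{\rm pert}$ with the thermal variance of $R_{1,2}$ for soft (as opposed to Ising) spins, since the $\|\sigma\|^2$ term in \eqref{OverPert} contributes non-trivially; for this the boundedness $\supp(P^*)\subseteq[-1,1]^N$ is enough.
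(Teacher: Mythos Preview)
Your overall strategy---thermal/disorder decomposition and the three-point convexity argument for the disorder piece---matches the paper's. The gap is in the object to which you apply this strategy. You propose to decompose $\mathrm{Var}(R_{1,2})$ directly and identify $\partial_{\lambda_0}^2\mathbb{E}F_N^{\rm pert}$ with the thermal variance of $R_{1,2}$ ``up to lower-order corrections'', and the pointwise derivative with $\langle R_{1,*}\rangle$ ``up to Gaussian IBP''. Neither identification holds: the pointwise first derivative is $\langle\mathcal{H}'\rangle$ with $\mathcal{H}'=\sigma\cdot\sigma^*+\sigma\cdot Z/(2\sqrt{\lambda_{0,N}})-\|\sigma\|^2/2$, and the $Z$-term does \emph{not} disappear before taking the outer expectation; likewise the second derivative is $\mathbb{E}\langle(\mathcal{H}'-\langle\mathcal{H}'\rangle)^2\rangle$ minus a correction, which is the thermal variance of $\mathcal{H}'$, not of $R_{1,2}$. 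These are not the same object even to leading order (one involves $\sigma^*,Z$, the other a second replica), so the convexity argument applied to $F_N^{\rm pert}$ gives you concentration of $\langle\mathcal{L}\rangle:=\langle\mathcal{H}'\rangle/N$, not of $\langle R_{1,2}\rangle$.

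The paper closes this gap by inserting one non-obvious inequality \emph{before} the decomposition: it proves (via an algebraic identity using Nishimori and Gaussian IBP, see the Appendix) that
\[
\mathbb{E}\big\langle(R_{1,2}-\mathbb{E}\langle R_{1,2}\rangle)^2\big\rangle \;\le\; 4\,\mathbb{E}\big\langle(\mathcal{L}-\mathbb{E}\langle\mathcal{L}\rangle)^2\big\rangle,
\]
and \emph{then} runs exactly the thermal/disorder argument you describe on $\mathcal{L}$, for which the identifications are exact: $dF_N^{\rm pert}/d\lambda_{0,N}=\langle\mathcal{H}'\rangle$ pointwise, and $d^2F_N^{\rm pert}/d\lambda_{0,N}^2=\langle(\mathcal{H}'-\langle\mathcal{H}'\rangle)^2\rangle$ plus an explicit correction. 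So your disorder-step mechanics are right, but you need this bridging inequality (or an equivalent) to transfer the conclusion from $\mathcal{L}$ back to $R_{1,2}$; without it the argument does not close.
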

The upper bound here is uniform in $\lambda$. Denoting by $\e_\lambda$ the expectation in $\lambda$ when all $\lambda_k\sim \mathcal{U}[2^{-k-1},2^{-k}]$ for $k\geq 0$ are independent of each others and choosing, given $(v_N)$, an appropriate sequence $(\eps_N)$ verifying \eqref{scalingepsN}, we obtain
\begin{equation}
\lim_{N\to+\infty}\e_\lambda\e\big\la (R_{1,2} - \e\la R_{1,2} \ra)^2\big\ra= 0\,.
\label{Oconcentration2}
\end{equation}
This overlap concentration is forced by the perturbation term $\mathcal{H}^{\rm gauss}_N(\sigma,\lambda_0)$ and, once we have it, the concentration of all other multioverlaps will be forced by the perturbation term $\mathcal{H}^{\rm exp}_N(\sigma,\lambda)$. More precisely, this perturbation will be used to prove in Theorem \ref{ThFdSiden} below the analogue of the Franz-de Sanctis identities \cite{FdS}, and then we will use them to derive the following.
\begin{theorem}[Multioverlap concentration for binary spins]\label{MainTh}
Suppose that \eqref{Oconcentration2} holds, the prior factorises as \eqref{ProductM}, and the symmetry between spins \eqref{spinSymm} holds. Under \eqref{scalingepsN}, \eqref{sNcond} we have
\begin{equation}
\lim_{N\to+\infty}\e_{\lambda}\e\big\la (R_{1,\ldots,n}  - \e\la R_{1,\ldots,n} \ra)^2\big\ra = 0 \quad \text{for all} \quad n\geq 1\,.
\label{MOconcentration}
\end{equation}
\end{theorem}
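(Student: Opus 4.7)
The overall strategy is to bootstrap concentration of higher multioverlaps from the two-replica case \eqref{Oconcentration2} using the Franz--de Sanctis (FdS) identities of Theorem~\ref{ThFdSiden}, which are triggered by the exponential-channel perturbation \eqref{exp_pertu}. Schematically, the FdS identity proven there will assert that, for every bounded function $f$ of finitely many replica coordinates,
\begin{equation*}
\lim_{N\to\infty}\mathbb{E}_\lambda\bigl|\mathbb{E}\la f\cdot \Phi(\lambda_k,\sigma^1_I,\sigma^*_I)\ra - \mathbb{E}\la f\ra\cdot\mathbb{E}\la\Phi(\lambda_k,\sigma^1_I,\sigma^*_I)\ra\bigr|=0,
\end{equation*}
where $I$ is sampled uniformly from $\{1,\ldots,N\}$ and $\Phi$ is the per-observation log-likelihood associated with a single exponential side-observation (after integrating out $\xi\sim\mathrm{Exp}(1)$). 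It will come from a cavity-style comparison between $s$ and $s+1$ observations at level $k$, controlled by the free-entropy concentration \eqref{vNfreeEnergy} and regularised by the integration over $\lambda_k\sim\mathcal{U}[2^{-k-1},2^{-k}]$.

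The reason this form of the identity yields information on arbitrary multioverlaps is a peculiar feature of the exponential channel: after integrating out $\xi$, the function $\Phi(\lambda_k,\cdot,\cdot)$ admits a power series in $\lambda_k$ whose coefficients generate all monomials in $\sigma^1_I\sigma^*_I$. By the Nishimori identity these monomials correspond, after replacing each $\sigma^*_I$ by an extra replica, to every $n$-fold product $\sigma^{\ell_1}_I\cdots\sigma^{\ell_n}_I$. Because the FdS identity holds for every $\lambda_k$ in a nontrivial interval, one can separate contributions of distinct powers of $\lambda_k$ (by matching coefficients of an analytic function in $\lambda_k$, equivalently by a Vandermonde argument on distinct values), and reach, for every $n\geq 1$ and every admissible $f$,
\begin{equation*}
\lim_{N\to\infty}\mathbb{E}_\lambda\,\frac{1}{N}\sum_{I\leq N}\Bigl\{\mathbb{E}\la f\cdot\sigma^1_I\cdots\sigma^n_I\ra - \mathbb{E}\la f\ra\cdot\mathbb{E}\la\sigma^1_I\cdots\sigma^n_I\ra\Bigr\}=0.
\end{equation*}

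The conclusion of the theorem is then immediate from the symmetry among spins \eqref{spinSymm}. Picking $f=\sigma^1_{I'}\cdots\sigma^n_{I'}$ for an independent uniform $I'$ and averaging over $I,I'$ identifies the two terms with, respectively,
\begin{equation*}
\frac{1}{N^2}\sum_{I,I'}\mathbb{E}\la(\sigma^1_I\cdots\sigma^n_I)(\sigma^1_{I'}\cdots\sigma^n_{I'})\ra=\mathbb{E}\la R_{1,\ldots,n}^2\ra,\qquad \frac{1}{N}\sum_I\mathbb{E}\la\sigma^1_I\cdots\sigma^n_I\ra=\mathbb{E}\la R_{1,\ldots,n}\ra,
\end{equation*}
so the previous display collapses to $\mathbb{E}_\lambda[\mathbb{E}\la R_{1,\ldots,n}^2\ra-(\mathbb{E}\la R_{1,\ldots,n}\ra)^2]\to 0$, which is exactly \eqref{MOconcentration}. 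The cases $n=1$ and $n=2$ are handled separately by the Nishimori identity applied to a single spin and by the hypothesis \eqref{Oconcentration2}, respectively.

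The main obstacle I anticipate is the \emph{coefficient-matching step}: extracting, from a scalar FdS identity parametrized by $\lambda_k$, a separate identity for each multioverlap order $n$. The specific algebraic structure of the exponential likelihood is essential here---it generates all polynomial moments of $\sigma\sigma^*$, in contrast with a Gaussian channel that only accesses the two-replica overlap. The geometric partition $\lambda_k\in[2^{-k-1},2^{-k}]$, together with the scalings \eqref{sNcond}, is tuned so that the total perturbation remains subleading at the free-entropy level (as quantified by \eqref{pertIsnonPert}) while still being strong enough to enforce concentration at every fixed order $n$; balancing these requirements uniformly in $n$ is where the analysis will be most delicate.
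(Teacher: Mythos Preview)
Your proposal has a real gap at the central step, and the approach as written cannot produce the higher multioverlaps.

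First, the Franz--de Sanctis identity of Theorem~\ref{ThFdSiden} is \emph{not} of the form you state. It does not say that a function $\Phi(\lambda_k,\sigma_I^1,\sigma_I^*)$ decorrelates from $f$; rather, the object on the left is
\[
\e\,\frac{\la f_n\, d_{ik}^1\, e^{\sum_{\ell\le n}\theta_{ik}^\ell}\ra}{\la e^{\theta_{ik}}\ra^n},
\]
which involves \emph{all $n$ replicas simultaneously} through the tilt $e^{\sum_{\ell}\theta_{ik}^\ell}$ and, crucially, carries a nontrivial denominator $\la e^{\theta_{ik}}\ra^n$. Your $\Phi$ depending on a single replica and the signal is a different (and weaker) statement.

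Second, even granting your simplified identity, the coefficient-matching step fails for binary spins. Any function of $(\sigma_I^1,\sigma_I^*)\in\{-1,1\}^2$ lies in the four-dimensional span of $\{1,\sigma_I^1,\sigma_I^*,\sigma_I^1\sigma_I^*\}$, so expanding in $\lambda_k$ can only produce these four monomials as coefficients. After a single Nishimori replacement $\sigma^*\to\sigma^2$, you recover at most decorrelation of the magnetisation and of $R_{1,2}$---never $\sigma_I^1\sigma_I^2\sigma_I^3$ or higher. There is no Vandermonde/analyticity argument that can extract an $n\geq 3$ monomial from a function of only two $\pm1$ variables.

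The paper's route is genuinely different and uses the hypothesis \eqref{Oconcentration2} in an essential way, not just as a base case. One first passes to a subsequential limit and invokes the Aldous--Hoover representation $\sigma_i^\ell=\sigma(w,u_\ell,v_i,x_{i,\ell})$; the concentration of $R_{1,2}$ then forces the \emph{thermal pure state} $\bar\sigma(w,u,v)=\bar\sigma(w,v)$, so that all replicas share the same conditional mean $\la\sigma\ra$. Only in this limit does the denominator in the FdS identity become useful: starting from a quantity of the form
\[
X(w)=\e_{|w}\int_0^\infty \la \sigma e^{-\lambda y\sigma}\ra\,\frac{\la \sigma(1+\lambda\sigma)e^{-\lambda y\sigma}\ra}{\la(1+\lambda\sigma)e^{-\lambda y\sigma}\ra}\,y e^{-y}\,dy,
\]
each $\lambda$-derivative applied to the \emph{denominator} brings out a fresh factor $\la\sigma\ra$, which in the pure state is a new replica; the $n$th derivative at $\lambda=0$ therefore isolates $R_{1,\ldots,n+2}^\infty$ (with a nonzero coefficient $\e\,\xi(\xi-1)^n$), plus lower-order multioverlaps handled by induction. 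This is how the higher replicas appear---through the denominator, not through a power expansion of a two-argument function---and it cannot be reproduced at finite $N$ without the Aldous--Hoover limit and the pure-state reduction. Your outline omits both of these ingredients.
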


Asymptotically, multioverlaps contain all information about finite dimensional distributions of the array $(\sigma_i^\ell)_{i\le N,\ell\ge 1}$ under the \emph{quenched Gibbs measure} $\e[G_N^{{\rm pert}}(\,\cdot\,,\lambda)^{\otimes \infty}]$. Indeed if one writes a generic joint moment of this measure
\begin{align}
\e\big\la \prod_{(i,\ell)\in \mathcal{C}} \sigma_{i}^{\ell}\big\ra\,, \label{jointMom}
\end{align}
where this time $\mathcal{C}$ is any finite set of pairs $(i,\ell)$ of spin/replica indices (with possible repetitions), then it can be re-expressed straightforwardly as a function of the multioverlaps. For example,
\begin{align}
\begin{split}
\e\big\la R_{1,2,4}(R_{2,3})^2\ldots\big\ra
&=
\e\big\la N^{-1}\sum_{i\leq N}\sigma_i^{1}\sigma_i^{2}\sigma_i^{4}
\times N^{-1}\sum_{j\leq N}\sigma_j^{2}\sigma_j^{3}
\times N^{-1}\sum_{k\leq N}\sigma_k^{2}\sigma_k^{3}\ldots\big\ra
\\
&=\e\big\la \sigma_1^{1}\sigma_1^{2}\sigma_1^{4}\sigma_2^{2}\sigma_2^{3}\sigma_3^{2}\sigma_3^{3}\ldots\big\ra+\mathcal{O}(N^{-1}),
\label{multi_are_spins}
\end{split}
\end{align}
by symmetry between sites/spins. Therefore controlling the multioverlaps gives precise structural information about the quenched Gibbs measure of the model.

Concretely, this result implies two things: firstly, a simple representation of asymptotic measures, and
secondly, a strong decoupling phenomenon: in average w.r.t. the perturbation pameters $\lambda$, any finite number of spins are asymptotically independent of each others under $\E\langle\,\cdot\,\rangle$. Precise statements are given below, and proven in Section~\ref{sec:proof_coro}.
\begin{corollary}[Asymptotic spin distribution]
    Under the hypotheses of Theorem \ref{MainTh}, there exist a sequence of perturbation paramaters $\lambda^N$ such that for every subsequence $(N_j)_{j\geq1}$ along which the replicated system $(\sigma_i^\ell)_{i,\ell \geq1}$ converges in distribution and at the same time $\lambda^{N_j}$ converges with limit $\lambda$, there exists a probability measure $\zeta_\lambda$ over the set of Borel probability measures on $\{-1,1\}$ such that, for all $i\geq1$, the spin variables $(\sigma_i^\ell)_{\ell\geq1}$ converge jointly in distribution towards independent samples from $\mu_i$, where the $(\mu_i)_{i\ge 1}$ (which can be parametrized by their mean $(m_i)_{i\ge i}$, $m_i\in [-1,1]$) are i.i.d. measures distributed according to $\zeta_\lambda$.
\end{corollary}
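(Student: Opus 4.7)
The plan is to use the multioverlap concentration of Theorem \ref{MainTh} to derive an asymptotic factorization of all joint spin moments across distinct sites, and then invoke de Finetti's theorem on each replica column to obtain the desired representation.

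First, Theorem \ref{MainTh} gives $\E_\lambda \E\langle (R_{1,\ldots,n} - \E\langle R_{1,\ldots,n}\rangle)^2\rangle \to 0$ for every $n \ge 1$. A diagonal extraction (enumerating $n=1,2,\ldots$ and using Markov's inequality in $\lambda$ on the finite intersection of the corresponding bad events) produces a deterministic sequence $\lambda^N \in \prod_k [2^{-k-1}, 2^{-k}]$ such that
\begin{equation*}
\lim_{N\to+\infty}\E\langle (R_{1,\ldots,n} - \E\langle R_{1,\ldots,n}\rangle)^2\rangle_{\lambda^N} = 0 \quad \text{for every fixed } n\ge 1.
\end{equation*}
Since the spin array takes values in the compact set $\{-1,1\}^{\N\times\N}$ and each $\lambda^N$ lies in a compact product of intervals, Prokhorov's theorem guarantees that along any subsequence one can further extract $(N_j)$ on which $(\sigma_i^\ell)_{i,\ell}$ (under $\E\langle\,\cdot\,\rangle_{\lambda^{N_j}}$) and $\lambda^{N_j}$ both converge, the latter to some limit $\lambda$.

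The central step is asymptotic site decoupling. For any (possibly overlapping) subsets $S_1, \ldots, S_k \subset \N$ of replica indices, define $R_{S_r} := N^{-1}\sum_{i\le N}\prod_{\ell \in S_r}\sigma_i^\ell$; each is a multioverlap from \eqref{multioverlaps} up to relabeling of replicas, and hence concentrates. Since $|R_{S_r}|\le 1$, Cauchy--Schwarz upgrades individual $L^2$-concentration to $\E\langle R_{S_1}\cdots R_{S_k}\rangle = \prod_r \E\langle R_{S_r}\rangle + o(1)$. On the other hand, expanding each $R_{S_r}$, the contribution from coinciding summation indices is $O(1/N)$ (spins are bounded), while all off-diagonal terms are equal by the spin symmetry \eqref{spinSymm} of the Gibbs measure, so that
\begin{equation*}
\E\big\langle\prod_{r=1}^k\prod_{\ell \in S_r}\sigma_r^\ell\big\rangle \;\longrightarrow\; \prod_{r=1}^k\E\big\langle\prod_{\ell \in S_r}\sigma_r^\ell\big\rangle,
\end{equation*}
and the same limit holds for any $k$ distinct sites $j_1,\ldots,j_k$ by exchangeability. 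This is exactly asymptotic independence of distinct replica columns under the quenched Gibbs measure.

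Finally, for each fixed site $i$ the sequence $(\sigma_i^\ell)_{\ell\ge 1}$ is exchangeable under $\E\langle\,\cdot\,\rangle$ (replicas are i.i.d. under $\langle\,\cdot\,\rangle$), so de Finetti's theorem furnishes a random probability measure $\mu_i$ on $\{-1,1\}$, measurable with respect to the $i$-th column, such that, conditionally on $\mu_i$, the replicas at site $i$ are i.i.d. with law $\mu_i$; the two-point support means $\mu_i$ is fully encoded by its mean $m_i \in [-1,1]$. The column decoupling above then forces the $\mu_i$'s to be mutually independent in the subsequential limit, while \eqref{spinSymm} makes them identically distributed; their common law is the announced $\zeta_\lambda$, which may depend on the limit $\lambda$ of the chosen subsequence. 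The only delicate point is the first one, namely converting the $\lambda$-average concentration of Theorem \ref{MainTh} into a single deterministic sequence $\lambda^N$ that simultaneously realizes concentration of all multioverlaps as $N\to+\infty$; the countable diagonal procedure above handles it, and the remaining steps are then a clean application of de Finetti and the spin symmetry.
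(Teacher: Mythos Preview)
Your argument is correct and arrives at the same conclusion as the paper, but by a genuinely different route. The paper works through the Aldous--Hoover representation: it writes the limiting array as $\sigma(w,u_\ell,v_i,x_{i,\ell})$, shows that multioverlap concentration forces the asymptotic multioverlaps $R^\infty_{1,\ldots,n}(w,(u_{\ell_j}))$ to be almost surely constant, and then argues that one may freeze $w=w_0$, $u_\ell=u_{0,\ell}$ and pass to a reduced representation $\tilde\sigma_\lambda(v_i,x_{i,\ell})$ without changing any joint moment; the measure $\zeta_\lambda$ is then read off as the law of $dx\circ(x\mapsto\tilde\sigma_\lambda(v,x))^{-1}$ under uniform $v$. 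You instead bypass Aldous--Hoover entirely: you prove site decoupling of all joint spin moments directly from the factorization $\E\langle R_{S_1}\cdots R_{S_k}\rangle\to\prod_r\E\langle R_{S_r}\rangle$ combined with spin symmetry, and then invoke only the classical de~Finetti theorem column by column, observing that the directing measures $\mu_i$ are measurable functions of independent, identically distributed columns and hence themselves i.i.d.

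Your approach is more elementary in that it avoids the representation theorem for row--column exchangeable arrays, using only the one-dimensional de~Finetti theorem; the paper's approach has the advantage that the Aldous--Hoover framework is already set up for the proof of Theorem~\ref{MainTh} itself, so the corollary falls out as an immediate reinterpretation of the reduced function $\tilde\sigma_\lambda$. Both the extraction of a single deterministic $\lambda^N$ and the passage from $L^2$-concentration of individual multioverlaps to factorization of their products are handled correctly in your write-up.
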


Measure $\zeta_\lambda$ is not necessarily unique and may a priori depend on the subsequence $(N_j)_{j\geq1}$. The second consequence of Theorem~\ref{MainTh} is the following strong asymptotic independence.

\begin{corollary}[Strong asymptotic spin independence]
    Under the hypotheses of Theorem \ref{MainTh}, for every $k\geq 1$ and any collection $h_1,\dots,h_k:\{-1,1\} \to \R$ of continuous functions, we have that
    \begin{equation}
        \E_\lambda\Big|\E \Big\langle {\prod_{j=1}^k h_j(\sigma_{j})}\Big \rangle - \prod_{j=1}^k \E \Big\langle {h_j(\sigma_{j})}\Big\rangle\Big| \xrightarrow{N\to+\infty} 0\,.\label{decou}
    \end{equation}
\end{corollary}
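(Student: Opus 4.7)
The plan is to reduce this decoupling statement to two ingredients: spin exchangeability \eqref{spinSymm} and concentration of the single-replica magnetization $R_1:=N^{-1}\sum_{i\le N}\sigma_i^1$, which is precisely the $n=1$ case of Theorem~\ref{MainTh}. First, since $\Sigma=\{-1,1\}$ every continuous $h_j\colon\{-1,1\}\to\R$ is affine, $h_j(x)=a_j+b_jx$. Expanding both products over subsets $S\subseteq\{1,\ldots,k\}$ and setting $c_S:=\prod_{j\notin S}a_j\prod_{j\in S}b_j$,
$$
\E\la\prod_{j=1}^k h_j(\sigma_j)\ra-\prod_{j=1}^k\E\la h_j(\sigma_j)\ra=\sum_{S}c_S\Big(\E\la\prod_{j\in S}\sigma_j\ra-\prod_{j\in S}\E\la\sigma_j\ra\Big),
$$
so by the triangle inequality it is enough to prove $\E_\lambda\bigl|\E\la\sigma_1\cdots\sigma_k\ra-\prod_{j=1}^k\E\la\sigma_j\ra\bigr|\to 0$ for every fixed $k\ge 1$.

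Second, rewrite both sides through the magnetization. By \eqref{spinSymm}, the scalar $\E\la\sigma_{i_1}\cdots\sigma_{i_k}\ra$ takes a common value over all ordered $k$-tuples of distinct indices, and since only $N^k-(N)_k=\O(N^{k-1})$ of the $N^k$ ordered $k$-tuples involve a repetition (with every summand bounded by $1$),
$$
\E\la R_1^k\ra=\frac{1}{N^k}\sum_{i_1,\ldots,i_k\le N}\E\la\sigma_{i_1}\cdots\sigma_{i_k}\ra=\E\la\sigma_1\cdots\sigma_k\ra+\O(N^{-1})
$$
uniformly in $\lambda$. The same symmetry gives $\E\la\sigma_j\ra=\E\la R_1\ra$ for every $j$, hence $\prod_{j=1}^k\E\la\sigma_j\ra=(\E\la R_1\ra)^k$ exactly, and it remains to show $\E_\lambda|\E\la R_1^k\ra-(\E\la R_1\ra)^k|\to 0$.

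For this last step, the inequality $|a^k-b^k|\le k|a-b|$ on $[-1,1]$ combined with Cauchy--Schwarz gives
$$
|\E\la R_1^k\ra-(\E\la R_1\ra)^k|\le k\,\E\la|R_1-\E\la R_1\ra|\ra\le k\,\bigl(\E\la(R_1-\E\la R_1\ra)^2\ra\bigr)^{1/2},
$$
and averaging in $\lambda$ with Cauchy--Schwarz reduces the problem to $\E_\lambda\E\la(R_1-\E\la R_1\ra)^2\ra\to 0$, which is Theorem~\ref{MainTh} at $n=1$. There is no real obstacle once Theorem~\ref{MainTh} is in hand: the corollary is bookkeeping plus magnetization concentration, and in fact does not require concentration of any higher multioverlap. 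The substantive work -- the exponential side-channel perturbation and the Franz--de Sanctis identities -- has already been carried out upstream.
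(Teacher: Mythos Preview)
Your proof is correct, and it is genuinely different from the paper's argument. The paper proceeds via the Aldous--Hoover representation: it extracts a convergent subsubsequence of the spin array, invokes Corollary~\ref{cor:asymp_spin_dist} (which rests on the concentration of \emph{all} multioverlaps) to conclude that the limiting function $\sigma(w,u,v,x)$ is independent of $w$ and $u$, and then uses a Prokhorov-metric / subsequence argument to deduce the decoupling along the full sequence. Your route is far more elementary: you exploit that on $\{-1,1\}$ every function is affine, reduce the claim to $\E_\lambda|\E\la R_1^k\ra-(\E\la R_1\ra)^k|\to 0$, and close with Cauchy--Schwarz and the $n=1$ case of Theorem~\ref{MainTh} (indeed, the explicit $1/N$ bound in Section~\ref{sec:magn} even makes the estimate uniform in $\lambda$). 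This is a real simplification in the Ising case and your closing remark is well taken: the corollary, as stated for binary spins, does not actually need the Franz--de Sanctis machinery or higher multioverlap concentration. The trade-off is that your argument is specific to $\{-1,1\}$; for the soft-spin Corollary~\ref{cor:asymp_indep_disthm} the affine reduction fails and one would need either the paper's Aldous--Hoover route or an analogous but more involved moment argument.
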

In the above statement, by spin permutation symmetry (hypothesis \eqref{spinSymm}), we fix without loss of generality the spin indices to be the first $k$ ones but this choice is arbitrary.

It is important to realize that such strong control of the asympotic law(s) of the replicated system are possible to obtain because \emph{all} multioverlaps have been shown to concentrate with respect to \emph{all} the randomness in the model. Simply controlling the more standard two-replicas overlap would not be enough to get these corollaries in general. In particular, in models defined by \emph{sparse} graphical structures, such as sparse graph codes for error correction \cite{richardson2008modern} or community detection and sparse stochastic block models \cite{abbe2017community}, the asympotic finite marginals of the spins cannot be ``simply'' parametrized by few scalar parameters (linked to the usual overlap) as in models with a \emph{dense} underlying graphical structure (like the ones described in Section~\ref{sec:examples}). Our results may therefore end up being particularly useful in the context of these sparse models.

\subsubsection*{The Nishimory identity} Many proofs crucially rely on the Nishimori property of optimal Bayesian inference models. It is a simple consequence of the fact that sampling $(\sigma^*,W)$ according to their joint law is equivalent to first sampling the data $W=(Y,Y^{\rm gauss},Y^{\rm exp})$ according to its marginal, and then sampling $\sigma^*$ according to the conditional distribution which, in the Bayesian optimal setting, is the posterior distribution. This simple fact implies that, for any function $f$ of multiple replicas $(\sigma^\ell)$, the data and the signal $\sigma^*$, we have
\begin{equation}
\e_{\sigma^*}\E_{W \mid \sigma^*}\big\la f(\sigma^*,\sigma^2,\ldots,\sigma^n,W) \big\ra
=
\e_{W}\big\la f(\sigma^1,\sigma^2,\ldots,\sigma^n,W) \big\ra\,,
\label{Nishi_minimal}
\end{equation}
where $\e_{\sigma^*}$ is the expectation w.r.t. $\sigma^*$
 only, and $\E_{W \mid \sigma^*}$ the one w.r.t. $W$ conditional on $\sigma^*$. Recall that the bracket is the expectation with respect to the product posterior measure acting on the conditionally independent replicas. Inside expectations involving both the data (and the signal if explicitely appearing in $f$) and the (product) posterior measure, one replica can therefore be ``replaced'' by the planted signal, and vice-versa.
This key replica/signal symmetry is at the origin of the strong replica symmetry in optimal Bayesian inference.
%
Averaging over all $(\sigma^*,W,\theta,\pi,(i_{jk}))$ or equivalently $(\sigma^*,Y,Z,\xi,\theta,\pi,(i_{jk}))$,
\begin{equation}
\e\big\la f(\sigma^*,\sigma^2,\ldots,\sigma^n,W) \big\ra
=
\e\big\la f(\sigma^1,\sigma^2,\ldots,\sigma^n,W) \big\ra\,.
\label{Nishi}
\end{equation}

\subsection{Soft bounded spins: perturbed model and multioverlaps concentration}\label{sec:softspins_results}
Next, we consider a more general case of bounded spins, for certainty, $\sigma^*\in [-1,1]^N$. Note that, because in the binary spins case $\sigma^*\in\{-1,1\}^N$ we have $(\sigma_i^\ell)^p=1$ or $\sigma_i^\ell$ depending on the parity of $p\in\mathbb{N}$, then only multioverlaps \eqref{multioverlaps} with $\ell_1\neq \ldots \neq \ell_n$ appear when computing the joint moments of the quenched Gibbs measure as in \eqref{multi_are_spins}. This is why in Theorem~\ref{MainTh} the replica indices $1,\ldots,n$ that appear are different and this is sufficient. If instead $\sigma^*\in[-1,1]^N$ (or any other alphabet) then richer multioverlaps with generic replica indices $\ell_1,\ldots,\ell_n$ may appear. Equivalently the multioverlaps to control are, therefore,
\begin{equation}
R_{\ell_1,\ldots, \ell_n}^{(k_1,\ldots,k_n)}:= \frac{1}{N}\sum_{i\le N}(\sigma_i^{\ell_1})^{k_1}\ldots(\sigma_i^{\ell_n})^{k_n}
\end{equation}
with again $\ell_1\neq \ldots \neq \ell_n$, and each replica index $\ell_j$ comes with an integer power $k_j\geq 1$. Allowing same replica indices would be redundant as, e.g., $R_{1,1,2}^{(2,1,4)}$ would be the same as $R_{1,2}^{(3,4)}$.

\subsubsection*{Perturbed model} As a result of the increased richness of the multioverlaps definition, we will first need to control generalised overlaps
\begin{equation}
R_{1,2}^{(k)}:= \frac{1}{N}\sum_{i\le N} (\sigma_i^{1}\sigma_i^{2})^k \label{gene_overlap}
\end{equation}
for all $k\geq 1$. In order to do that, we add noisy side gaussian channels $$Y_{ik}^{\rm gauss} = \sqrt{\lambda_{0k}\eps_N}(\sigma_i^*)^k+Z_{ik}\quad \mbox{for all}\quad i\le N\quad \mbox{and} \quad  k\ge1$$ with $Z=(Z_{ik})$ with entries $Z_{ik}$ being i.i.d. standard gaussians. These modify the posterior and result in an extra term in the Hamiltonian:
\begin{equation}
\mathcal{H}^{\rm gauss}_N(\sigma,\lambda_{0}):=\sum_{k\geq 1}
\sum_{i\le N}\Bigl(
\lambda_{0k}\eps_N(\sigma_i^*\sigma_i)^k + \sqrt{\lambda_{0k}\eps_N}Z_{ik}(\sigma_i)^k-\frac{1}{2}\lambda_{0k}\eps_N(\sigma_i)^{2k}
\Bigr)\,.
\label{OverPertG}
\end{equation}
We will take $\lambda_0=(\lambda_{0k})_{k\ge 1}$ with $\lambda_{0k}\in [2^{-k-1},2^{-k}]$, which ensures that the above is well defined.

The analogue of the exponential channel perturbation \eqref{exp_pertu} is defined in exactly the same way as before, only a spin $\sigma_i$ is replaced by some polynomial $P_I(\sigma_i)$ and index $k$ is replaced by some multi-index $I$. Namely, let us consider multi-index $I$ consisting of an integer $m\geq 1$ and $m$ dyadic numbers 
\begin{equation}
a_{p}\in \{2^{-k}: k\geq 1\} \quad\mbox{for}\quad p\in \{0,\ldots, m-1\}\,.
\label{eqApq}
\end{equation}
This is a countable collection $\II$, so we can enumerate it by an injection $\iota\,:\, \II\to \Natural_{\ge 0}$. For $I\in \II,$ let
\begin{equation}
P_I(x):= 2^{-\iota(I)-m} \sum_{p=0}^{m-1} a_{p} x^p\,,\quad x\in[-1,1]
\label{POLdef}
\end{equation}
be a bounded polynomial of one spin, with values in $[-m 2^{-\iota(I)-m}, m2^{-\iota(I)-m}]\subset [-1/2,1/2]$. As before we introduce i.i.d. Poisson numbers $\pi=(\pi_I)_{I\in\mathcal{I}}$ with $\pi_I\sim{\rm Poiss}(s_N)$ as well as random indices $i_{jI}$ which are independently drawn from $\mathcal{U}\{1,\ldots,N\}$ for $j\le \pi_I$ and $I\in\mathcal{I}$. The exponential side observations are $$Y^{\rm exp}_{jI}=\frac{\xi_{jI}}{1+\lambda_IP_I(\sigma^*_{i_{jI}})} \quad\text{for} \quad j\le \pi_I \quad \text{and} \quad I\in\mathcal{I}$$ with i.i.d. noise $\xi=(\xi_{jI})$ with $\xi_{jI}\sim{\rm Exp}(1)$, and where each parameter $\lambda_I\in [1/2,1]$. Then instead of \eqref{exp_pertu}, and letting $\lambda_{\mathcal{I}}=(\lambda_I)_{I\in \mathcal{I}}$, these side observations yield the perturbation Hamiltonian
\begin{align}
 \mathcal{H}^{\rm exp}_N(\sigma,\lambda_{\mathcal{I}})
 :=\sum_{I\in \II}\sum_{j\le \pi_I} \Big(\ln\big(1+\lambda_IP_I(\sigma_{i_{jI}})\big)-\frac{\lambda_I \xi_{jI} P_I(\sigma_{i_{jI}})}{1+\lambda_I P_I(\sigma^*_{i_{jI}})}\Big) \label{PLdefG}\,.
\end{align}
Here $\lambda=(\lambda_0,\lambda_{\mathcal{I}})$. All together, \eqref{OverPertG} and \eqref{PLdefG} result in a perturbed inference model with Hamiltonian given by 
\begin{equation}
\mathcal{H}_N^{\mathrm{pert}}(\sigma,\lambda):= \mathcal{H}_N(\sigma)+\mathcal{H}^{\rm gauss}_N(\sigma,\lambda_0) + \mathcal{H}^{\rm exp}_N(\sigma,\lambda_{\mathcal{I}})\,.
\end{equation}
As the results and proofs in the Ising and soft bounded spins cases are well separated, we allow ourselves to use similarly to \eqref{def:bracket} the notation $\la \, \cdot\,\ra$ or $\la \, \cdot\,\ra_\lambda$ for the expectation with respect to the posterior Gibbs measure proportional to $\exp \mathcal{H}_N^{\mathrm{pert}}(\sigma,\lambda)$:
\begin{align}
G_{N}^{\rm pert}(\sigma,\lambda)=\mathbb{P}_\lambda(\sigma^*\in d\sigma \mid \mathcal{S}_N)&:= \frac{1}{\mathcal{Z}_{N}^{\rm pert}(\mathcal{S}_N)}\Big(\prod_{i\le N} P^*(d\sigma_i \mid \theta^*)\Big)  \exp\mathcal{H}_N^{\mathrm{pert}}(\sigma,\lambda) 
\end{align}
where now $\mathcal{S}_N:=\{W,\theta,\eps_N,\lambda,\pi,(i_{jI}),s_N\}$ with $W:=(Y,(Y_{ik}^{\rm gauss}),(Y_{jI}^{\rm exp}))$, and with Gibbs average
\begin{align}
 \big\la A((\sigma^{\ell})_{\ell\in \mathcal{C}})\big\ra &:= \int A((\sigma^{\ell})_{\ell\in \mathcal{C}}) \prod_{\ell\in \mathcal{C}} G_N^{\rm pert}(d\sigma^{\ell},\lambda) \,.\label{bracket_soft}
\end{align}
The integral is over the bounded domain $[-1,1]^{N\times |\mathcal{C}|}$. The measure $G_{N}^{\rm pert}(\cdot,\lambda)$ is the posterior distribution for a Bayesian optimal inference model, and therefore the Nishimori identity \eqref{Nishi_minimal} remains valid with the new definition of the Gibbs average, and also \eqref{Nishi} with an average $\e$ over $(\sigma^*,W,\theta,\pi,(i_{jI}))$ or equivalently $(\sigma^*,Y,Z,\xi,\theta,\pi,(i_{jI}))$.

Let the average free entropy of the pertubed model 
\begin{align*}
\E F_N^{\rm pert}(\lambda):=\e \ln \mathcal{Z}_{N}^{\rm pert}=\e\ln \int P^*(d\sigma\mid \theta^*)\, \exp \mathcal{H}_N^{\mathrm{pert}}(\sigma,\lambda) \,, 
\end{align*}
with, similarly as before,
\begin{equation}
Nv_N:=\sup\Big\{\e\big(F_N^{\rm pert}(\lambda)-\e F_N^{\rm pert}(\lambda)\big)^2 \, :\, \lambda_{0k}\in [2^{-k-1},2^{-k}] \ \text{for} \ k\geq 1,\, \lambda_I\in [1,2] \ \text{for} \ I\in\II\Big\}\,.
\end{equation}
We denote $\e_\lambda$ the expectation in $\lambda$ when $\lambda_{0k}\sim \mathcal{U}[2^{-k-1},2^{-k}]$ for $k\geq 1$ and all $\lambda_I\sim \mathcal{U}[1/2,1]$ for $I\in\II$. As in the binary case with \eqref{pertIsnonPert}, the perturbations are of a lower order with respect to the original Hamiltonian and, therefore, leave the free energy asymptotically invariant.

\subsubsection*{Main results for bounded spins} The following holds.
\begin{theorem}[Generalised overlap concentration for bounded spins]\label{GenOVer}
Suppose that $\supp(P^*)\subseteq [-1,1]^N$. Let $\lambda_{0k}\sim \mathcal{U}[2^{-k-1},2^{-k}]$. There exists an absolute constant $C>0$ such that
\begin{equation}
\e_{\lambda_{0k}}\e\big\la (R_{1,2}^{(k)} - \e\la R_{1,2}^{(k)} \ra)^2\big\ra
\leq \frac{C2^k}{\eps_N}\Big(\frac{v_N2^k}{N\eps_N}+\frac{1}{N}\Big)^{1/3} \quad \text{for all} \quad k\geq 1\,.
\label{OconcentrationG}
\end{equation}
This implies that, when the sequence $v_N$ does not grow too fast so that the above upper bound vanishes,
\begin{equation}
\lim_{N\to+\infty}\e_\lambda\e\big\la (R_{1,2}^{(k)} - \e\la R_{1,2}^{(k)} \ra)^2\big\ra= 0 \quad \text{for all} \quad k\geq 1\,.
\label{Oconcentration2G}
\end{equation}
\end{theorem}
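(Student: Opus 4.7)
I adapt the proof of Theorem~\ref{OverlapConc} to the $k$-th Gaussian side-channel. Since the Hamiltonian $\mathcal{H}_N^{\rm pert}(\sigma,\lambda)$ depends on $\lambda_{0k}$ only through the $k$-th summand of \eqref{OverPertG}, I freeze every other $\lambda_{0j}$ ($j\ne k$) and every $\lambda_I$, and treat $\lambda_{0k}\in[2^{-k-1},2^{-k}]$ as the sole varying parameter. Compared with Theorem~\ref{OverlapConc}, only two structural features change: the ``planted signal feature'' is now $(\sigma_i^*)^k$ rather than $\sigma_i^*$, so the relevant overlap becomes $R_{1,2}^{(k)}$; and the averaging interval has length $2^{-k-1}$ in place of $1/2$, which is the source of the $2^k$ factors in the final bound.

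First, I establish a first-derivative identity. Differentiating $\tfrac{1}{N}\E F_N^{\rm pert}(\lambda)$ in $\lambda_{0k}$, performing Gaussian integration by parts on the $Z_{ik}$-linear contribution (the counterterm $-\tfrac12\lambda_{0k}\eps_N(\sigma_i)^{2k}$ built into \eqref{OverPertG} is precisely tuned to cancel the self-overlap produced by this IBP), and applying the Nishimori identity, I obtain
\begin{equation*}
\frac{\partial}{\partial\lambda_{0k}}\frac{\E F_N^{\rm pert}(\lambda)}{N} \;=\; \frac{\eps_N}{2}\,\E\la R_{1,2}^{(k)}\ra.
\end{equation*}
Writing $\varphi(\lambda_{0k}):=\E F_N^{\rm pert}/N$, one has $\|\varphi'\|_\infty\le \eps_N/2$ since $|R_{1,2}^{(k)}|\le 1$. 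A second differentiation shows $\varphi$ is convex in $\lambda_{0k}$, and the same computation pointwise in the disorder shows that $\Phi(\lambda_{0k}):=F_N^{\rm pert}/N$ is convex modulo a centered $Z$-fluctuation of order $\sqrt{\eps_N 2^k/N}$ that is handled separately.

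Second, I run the convex-comparison concentration argument. For any $\eta$ smaller than the distance to the endpoints of $[2^{-k-1},2^{-k}]$, convexity of $\Phi$ and $\varphi$ yields the pointwise inequality
\begin{equation*}
|\Phi'(\lambda_{0k})-\varphi'(\lambda_{0k})|
\,\le\, \big[\varphi'(\lambda_{0k}+\eta)-\varphi'(\lambda_{0k}-\eta)\big] + \frac{C}{\eta}\sum_{s\in\{\lambda_{0k}-\eta,\,\lambda_{0k},\,\lambda_{0k}+\eta\}}|\Phi(s)-\varphi(s)|.
\end{equation*}
Squaring, taking $\E$, averaging over $\lambda_{0k}\in[2^{-k-1},2^{-k}]$ (which supplies a prefactor $2^{k+1}$ from dividing by the interval length), and inserting both $\E|\Phi-\varphi|^2\le v_N/N$ (by definition of $v_N$) and $\|\varphi'\|_\infty\le \eps_N/2$ (which yields $\int(\varphi'(\lambda+\eta)-\varphi'(\lambda-\eta))^2\,d\lambda\le 2\eta\eps_N^2$) produces an estimate of the form $C\cdot 2^k\eta\eps_N^2 + C v_N/(N\eta^2)$. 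Optimizing $\eta\sim (v_N/(2^k N\eps_N^2))^{1/3}$, converting $|\Phi'-\varphi'|^2$ into $\E\la(R_{1,2}^{(k)}-\E\la R_{1,2}^{(k)}\ra)^2\ra$ via the identity of the first step (and adding the $1/N$-sized contribution of the centered $Z$-fluctuation inside the cube root), and finally dividing by $(\eps_N/2)^2$ yields \eqref{OconcentrationG}. The limit \eqref{Oconcentration2G} then follows by choosing $\eps_N$ decaying slowly enough.

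\emph{Main obstacle.} The most delicate point is the bookkeeping of the centered $Z$-fluctuation $\tfrac{1}{2N}\sqrt{\eps_N/\lambda_{0k}}\sum_i Z_{ik}\la (\sigma_i)^k\ra$ present in $\Phi'-\varphi'$: on the shrinking interval $[2^{-k-1},2^{-k}]$ its second moment scales like $\eps_N 2^k/N$, and it is precisely this term that produces the $1/N$ inside the cube root and fixes the sharp power of $2^k$ that appears in the final bound.
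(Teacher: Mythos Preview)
Your approach is correct and is the same convexity-and-interpolation argument the paper uses; the paper, however, packages it more economically. Rather than redoing the computation on the interval $[2^{-k-1},2^{-k}]$ and tracking the $2^k$ factors by hand, the paper observes that writing $\lambda_{0k}=2^{-k}\tilde\lambda$ with $\tilde\lambda\in[1/2,1]$ and absorbing $2^{-k}$ into $\eps_N$ reduces the statement \emph{verbatim} to Theorem~\ref{OverlapConc}/Proposition~\ref{L-concentration} applied to the generalised spins $(\sigma_i)^k,(\sigma_i^*)^k\in[-1,1]$; substituting $\eps_N\to 2^{-k}\eps_N$ in \eqref{Oconcentration} immediately produces \eqref{OconcentrationG}.

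One point in your write-up is imprecise. The step ``converting $|\Phi'-\varphi'|^2$ into $\E\langle(R_{1,2}^{(k)}-\E\langle R_{1,2}^{(k)}\rangle)^2\rangle$ via the identity of the first step'' does not follow from the first-derivative identity alone: $\Phi'$ equals $\langle\mathcal{L}^{(k)}\rangle$ (a one-replica thermal average), not the two-replica overlap. What is needed is the analogue of \eqref{remarkable}, namely $\E\langle(R_{1,2}^{(k)}-\E\langle R_{1,2}^{(k)}\rangle)^2\rangle\le 4\,\E\langle(\mathcal{L}^{(k)}-\E\langle\mathcal{L}^{(k)}\rangle)^2\rangle$, whose proof (Gaussian integration by parts plus Nishimori, as in the Appendix) carries over unchanged with $\sigma_i$ replaced by $(\sigma_i)^k$. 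This also means you must separately bound the \emph{thermal} fluctuation $\E\langle(\mathcal{L}^{(k)}-\langle\mathcal{L}^{(k)}\rangle)^2\rangle$ by integrating the second derivative over $\lambda_{0k}$, in addition to the quenched piece $\E(\Phi'-\varphi')^2$ that your convexity argument controls. Both omissions are routine, but they are the steps that actually link the derivative fluctuations to the overlap.
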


The concentration of all other multioverlaps will be forced by the perturbation term \eqref{PLdefG} coming from side exponential channels. Again, more precisely, this perturbation will be used to prove in Theorem \ref{ThFdSidenG} below the analogue of the Franz-de Sanctis identities, and then we will use the identities to derive  the following.
\begin{theorem}[Multioverlap concentration for bounded spins]\label{MainThG}
Suppose that \eqref{Oconcentration2G} holds, the prior factorises as \eqref{ProductM}, and the symmetry between spins \eqref{spinSymm} holds. Under \eqref{scalingepsN}, \eqref{sNcond} we have
\begin{equation}
\lim_{N\to+\infty}\e_{\lambda} \e\big\la (R_{1,\ldots, n}^{(k_1,\ldots,k_n)}- \e\la R_{1,\ldots, n}^{(k_1,\ldots,k_n)}\ra)^2\big\ra = 0 \quad \text{for all}  \quad n\geq 1 \quad \text{and}\quad  k_1,\ldots,k_n\geq 1\,.
\label{MOconcentrationG}
\end{equation}
\end{theorem}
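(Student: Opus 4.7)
The plan is to follow the Panchenko--Franz-de Sanctis strategy adapted to soft-spin optimal Bayesian inference, mirroring the Ising case (Theorem~\ref{MainTh}) but handling the richer polynomial structure required by bounded, non-binary spins. The three moving parts are: establishing Franz-de Sanctis (FdS) identities from the exponential perturbation $\mathcal{H}^{\rm exp}_N(\sigma,\lambda_{\mathcal{I}})$, promoting them to moment identities for every polynomial in one spin, and then using them together with \eqref{Oconcentration2G} in an induction on the number of replicas.

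\textbf{Step 1 (FdS identities).} I would establish Theorem~\ref{ThFdSidenG} by exploiting that $F_N^{\rm pert}(\lambda)$ is convex in each coordinate $\lambda_I$ with uniformly bounded second derivative (as a log-Laplace functional of the exponential observations), and that it concentrates around its mean uniformly in $\lambda_{\mathcal{I}}$ at a rate controlled by $v_N$. A standard convexity-plus-concentration lemma then yields, for each $I\in\mathcal{I}$,
\[
\E_{\lambda_I}\E\bigl|\partial_{\lambda_I} F_N^{\rm pert}(\lambda) - \partial_{\lambda_I}\E F_N^{\rm pert}(\lambda)\bigr| \xrightarrow{N\to\infty} 0,
\]
under the scalings \eqref{scalingepsN}--\eqref{sNcond}. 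Differentiating \eqref{PLdefG} in $\lambda_I$, applying the Nishimori identity \eqref{Nishi} to dispose of the noise $\xi_{jI}$ and the planted signal $\sigma^*_{i_{jI}}$, and averaging over the uniform site indices $i_{jI}$ via spin symmetry \eqref{spinSymm}, this translates into the FdS-type statement that for every $n\ge 1$, $I\in\mathcal{I}$, and bounded continuous $f$ on $n$ replicas,
\[
\E_\lambda\Bigl|\E\bigl\la f(\sigma^1,\ldots,\sigma^n)\,\phi_{I,\lambda_I}\bigr\ra - \E\bigl\la f(\sigma^1,\ldots,\sigma^n)\bigr\ra\,\E\bigl\la \phi_{I,\lambda_I}\bigr\ra\Bigr| \to 0,
\]
where $\phi_{I,\lambda_I}$ is the local exponential-channel functional, a rational function of $P_I$ evaluated at one spin and its planted counterpart, averaged over a uniform site.

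\textbf{Step 2 (polynomial decoupling).} I would then expand $\phi_{I,\lambda_I}$ as a power series in $\lambda_I\in[1/2,1]$ and exploit the freedom to choose the dyadic coefficients $a_p$ in \eqref{eqApq}--\eqref{POLdef}: as $I$ ranges over $\mathcal{I}$, the $P_I$ (after rescaling by $2^{\iota(I)+m}$) span the polynomials on $[-1,1]$. Inverting this linear system, one extracts, for every $k\ge 1$,
\[
\E_\lambda\Bigl|\E\bigl\la f(\sigma^1,\ldots,\sigma^n)(\sigma^{n+1}_i)^k\bigr\ra - \E\bigl\la f(\sigma^1,\ldots,\sigma^n)\bigr\ra\,\E\bigl\la (\sigma^1_i)^k\bigr\ra\Bigr|\to 0,
\]
uniformly on average over $i\le N$. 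Via Nishimori this is equivalent to the ``one-spin decoupling'': at a typical site, the $(n+1)$-th replica behaves independently of the first $n$ under $\E\la\,\cdot\,\ra$, in the weak sense of matching all single-spin polynomial expectations.

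\textbf{Step 3 (induction on $n$).} Finally I would write the target variance as
\[
\E_\lambda\E\bigl\la(R^{(k_1,\ldots,k_n)}_{1,\ldots,n} - \E\la R^{(k_1,\ldots,k_n)}_{1,\ldots,n}\ra)^2\bigr\ra = \E_\lambda\E\bigl\la R^{(k_1,\ldots,k_n)}_{1,\ldots,n}\,R^{(k_1,\ldots,k_n)}_{n+1,\ldots,2n}\bigr\ra - \bigl(\E_\lambda\E\la R^{(k_1,\ldots,k_n)}_{1,\ldots,n}\ra\bigr)^2,
\]
expand each multioverlap as a spatial average over sites, and apply the Step~2 decoupling identity to peel off replicas one at a time. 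Each peeling incurs an $o(1)$ error after $\E_\lambda$-averaging, and after $n$ peelings the remaining quantity involves only generalised two-replica overlaps $R^{(k')}_{1,2}$, which are $o(1)$ on average in $\lambda$ by \eqref{Oconcentration2G} after a Cauchy-Schwarz estimate using $\supp(P^*)\subseteq[-1,1]^N$ to keep all spin products bounded by $1$. The base case $n=1$ is immediate from Nishimori and the $k=1$ instance of \eqref{Oconcentration2G}. The main obstacle is executing the peeling cleanly: each Step~2 identity is an $o(1)$ statement \emph{only after} averaging in $\lambda_I$, so the peeling must be ordered so successive averagings do not re-entangle earlier ones, and the $O(n^2)$ coincidences between the spatial indices on the two replica-groups must be absorbed using spin exchangeability at cost $O(1/N)$. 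A secondary subtlety, absent in the binary case, is that the full polynomial algebra in each spin must be handled---exactly why the countable dyadic enumeration \eqref{eqApq} and the enlarged family $\{R^{(k)}_{1,2}\}_{k\ge 1}$ in \eqref{Oconcentration2G} were introduced.
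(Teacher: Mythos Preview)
Your Step~1 is essentially right and mirrors the paper's derivation of Theorem~\ref{ThFdSidenG}. The gap is in Steps~2--3. The Franz--de Sanctis identity you obtain is not of the form ``$\phi_{I,\lambda_I}$ decouples from $f_n$'': it reads
\[
\e \frac{\la f_n\, d_{iI}^1\, e^{\sum_{\ell\le n}\theta_{iI}^\ell}\ra}{\la e^{\theta_{iI}}\ra^n}
\approx \e\la f_n\ra\,\e\frac{\la d_{iI}\,e^{\theta_{iI}}\ra}{\la e^{\theta_{iI}}\ra},
\]
where $d_{iI}^1$ lives on replica~$1$ (not a fresh replica $n{+}1$) and every replica carries a nonlinear reweighting $e^{\theta_{iI}^\ell}$ that depends on $P_I$ through $\ln(1+\lambda_I P_I(\cdot))$ and an exponential tilt. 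Because this dependence on $P_I$ is nonlinear and $\lambda_I$ is only controlled on average over $[1/2,1]$, you cannot ``expand in $\lambda_I$'' or ``invert the linear system in the $a_p$'' to isolate a clean $(\sigma_i^{n+1})^k$; no finite-$N$ one-spin decoupling of the form $\e\la f_n\,(\sigma_i^{n+1})^k\ra\approx\e\la f_n\ra\,\e\la\sigma_i^k\ra$ falls out. Your Step~3 decomposition is also incorrect as written: $\e\la R^2\ra\neq \e\la R_{1,\ldots,n}R_{n+1,\ldots,2n}\ra$ (the latter equals $\e\la R\ra^2$, so you have dropped the thermal variance), and $(\e_\lambda\e\la R\ra)^2$ should be $\e_\lambda(\e\la R\ra)^2$; the peeling that follows never makes clear how the two-overlap hypothesis~\eqref{Oconcentration2G} actually enters.

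The paper's route is genuinely different. It argues by contradiction, passes to a subsequential limit, and uses the Aldous--Hoover representation $\sigma(w,u,v,x)$. The hypothesis~\eqref{Oconcentration2G} is used \emph{exactly once}, to kill the $u$-dependence (thermal pure state, Section~\ref{SecAHsoft2}); this is what makes replicas exchangeable and reduces $\la\cdot\ra$ to an $x$-average. Then a decoupling lemma (the soft-spin analogue of \eqref{FdSlim1r3}) turns the limiting FdS identity into $\Var_w(Y(w))=0$ for a specific functional $Y$. The crucial step you are missing is analyticity: $Y$ is analytic in $\lambda$ near $0$, and since it is constant on the accumulation set $\Lambda\ni\lambda_k\to 0$ (respectively, since the coefficients $a_p$ in $P_I$ are dense at $0$), it is constant for all small $\lambda$ and all polynomial coefficients. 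Only \emph{then} does one take successive $\lambda$-derivatives at $0$, which is where the nonlinear reweighting collapses and the induction producing $R_{1,\ldots,n+2}^{(k_1,\ldots,k_{n+2})\infty}$ one replica at a time actually happens. The ``expansion'' you want in Step~2 is this derivative-at-zero argument, but it requires the Aldous--Hoover limit and analytic continuation to make sense; it cannot be done at finite~$N$ with $\lambda_I\in[1/2,1]$.
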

Again, the asymptotic meaning of this will be that there exists $\zeta_\lambda\in \Pr(\Pr[-1,1])$ such that, given i.i.d. $\mu_i\in \Pr[-1,1]$ from $\zeta_\lambda$, the spins $(\sigma_i^\ell)_{\ell\ge 1}$ are i.i.d. from $\mu_i$. 

\begin{corollary}[Asymptotic spin distribution]\label{cor:asymp_spin_dist}
    Under the hypotheses of Theorem \ref{MainThG}, there exist a sequence $\lambda^N$ such that for every subsequence $(N_j)_{j\geq1}$ along which the replicated system $(\sigma_i^\ell)_{i,\ell \geq1}$ converges in distribution and at the same time $\lambda^{N_j}$ converges to a certain limit $\lambda$, there exists a probability measure $\zeta_\lambda$ over the set of Borel probability measures on $[-1,1]$ such that, for all $i\geq1$, the spin variables $(\sigma_i^\ell)_{\ell\geq1}$ converge jointly in distribution towards independent samples from $\mu_i$, where the $(\mu_i)_{i\ge 1}$ are i.i.d. random measures distributed according to $\zeta_\lambda$.
\end{corollary}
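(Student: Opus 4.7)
The plan is to mirror the binary-case proof of the corollary following Theorem~\ref{MainTh}, now keeping track of arbitrary polynomial powers $k_j\ge 1$ in the generalised multioverlaps. The inputs are Theorem~\ref{MainThG} (concentration of every generalised multioverlap in $\E_\lambda$-mean) together with site-exchangeability~\eqref{spinSymm}; from the former I extract the asymptotic independence of any finite collection of distinct sites under the quenched Gibbs measure $\E\langle\cdot\rangle$, and the random-measure representation will then follow from the conditional i.i.d.\ structure of the replicas at each site.

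First, I would select a deterministic sequence $(\lambda^N)$ along which concentration holds pointwise. Writing
\begin{equation*}
f_T(N,\lambda):=\E\big\langle \big(R^{(k_1,\ldots,k_n)}_{1,\ldots,n}-\E\langle R^{(k_1,\ldots,k_n)}_{1,\ldots,n}\rangle\big)^2\big\rangle
\end{equation*}
for each tuple $T=(n;k_1,\ldots,k_n)$, Theorem~\ref{MainThG} asserts $\E_\lambda f_T(N,\lambda)\to 0$. Since the index set $\{T\}$ is countable, a diagonal Markov argument with summable thresholds $\eps_N^{(T)}\to 0$ produces $(\lambda^N)$ with $f_T(N,\lambda^N)\to 0$ for every $T$ simultaneously. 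Tychonoff compactness of $\prod_{k\ge 1}[2^{-k-1},2^{-k}]\times \prod_{I\in\II}[1/2,1]$ lets me extract a subsequence $(N_j)$ with $\lambda^{N_j}\to\lambda$, and a further diagonal extraction makes each of the countably many bounded joint moments $\E\langle \prod_{(i,\ell)\in\mathcal{C}}(\sigma_i^\ell)^{k_{i,\ell}}\rangle$ converge.

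Second, I would convert concentration into asymptotic site-independence. Site-exchangeability~\eqref{spinSymm} and boundedness give
\begin{equation*}
\E\big\langle \big(R^{(k_1,\ldots,k_n)}_{1,\ldots,n}\big)^m\big\rangle=\E\big\langle \prod_{i=1}^m \prod_p (\sigma_i^p)^{k_p}\big\rangle+\mathcal{O}(1/N),
\end{equation*}
and the bounded $L^2$-concentration from step one upgrades automatically to $L^m$ for every $m$, forcing $\E\langle \prod_{i=1}^m M(\sigma_i)\rangle\to (\E\langle M(\sigma_1)\rangle)^m$ for every monomial $M$ in the replica spins at a single site. Polarising $P=\sum_{j=1}^m t_jM_j$ and extracting the coefficient of $t_1\cdots t_m$ on both sides of this identity, then using site-exchangeability to collapse the $m!$ permutation terms to a single value, yields
\begin{equation*}
\E\big\langle \prod_{i=1}^m M_i(\sigma_i)\big\rangle\longrightarrow \prod_{i=1}^m \E\langle M_i(\sigma_1)\rangle
\end{equation*}
for arbitrary monomials $M_1,\ldots,M_m$ in the replica spins at one site: any finite collection of distinct sites is asymptotically mutually independent under $\E\langle\cdot\rangle$.

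Third, I would read off the random-measure representation. Under $G_N^{\rm pert}$ the replicas are i.i.d.\ given $\mathcal{S}_N$, so at every site $i$ the sequence $(\sigma_i^\ell)_{\ell\ge 1}$ is i.i.d.\ from the (random) single-site marginal $\mu_i$ of $G_N^{\rm pert}$, itself a Borel probability measure on $[-1,1]$. Along $(N_j)$ this random measure has a well-defined limit law (all its polynomial moments converge by step one), still denoted $\mu_i$, and the limit replicas at site $i$ are i.i.d.\ from $\mu_i$. The site-factorisation of step two makes $(\mu_i)_{i\ge 1}$ asymptotically mutually independent, and site-exchangeability makes them identically distributed, so in the limit they are i.i.d.\ from some law $\zeta_\lambda$ on Borel probability measures on $[-1,1]$, possibly depending on $\lambda$ and on the subsequence. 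The most delicate point in this scheme is the simultaneous realisation of $f_T(N,\lambda^N)\to 0$ across the countable family of tuples $T$ in step one; once this and a cluster point $\lambda$ are in hand, the remainder reduces to the polarisation/exchangeability algebra of step two and to the moment-to-distribution principle for uniformly bounded random variables.
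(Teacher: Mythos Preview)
Your argument is correct and gives a genuinely different route to the conclusion than the paper's. The paper works through the Aldous--Hoover representation already set up in Sections~\ref{SecAHb1}--\ref{SecAHb3} and~\ref{SecAHsoft1}--\ref{SecAHsoft3}: along any convergent subsequence the limiting array is $\sigma(w,u_\ell,v_i,x_{i,\ell})$, multioverlap concentration forces the asymptotic multioverlaps $R^{(k_1,\ldots,k_n)\infty}(w,(u_{\ell_j}))$ to be almost surely constant, and one then \emph{replaces} $(w,(u_\ell))$ by a fixed $(w_0,(u_{0,\ell}))$ to obtain a new representation $\tilde\sigma_\lambda(v_i,x_{i,\ell})$ with the same joint moments (hence same law), from which $\zeta_\lambda$ is read off as the push-forward $dv\circ(v\mapsto dx\circ(x\mapsto\tilde\sigma_\lambda(v,x))^{-1})^{-1}$. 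You instead bypass Aldous--Hoover entirely: from concentration of $R_P:=N^{-1}\sum_i P(\sigma_i)$ for every polynomial $P$ in replica spins you extract, via the polarisation $P=\sum_j t_j M_j$ and site-exchangeability, the full factorisation $\E\langle\prod_i M_i(\sigma_i)\rangle\to\prod_i\E\langle M_i(\sigma_i)\rangle$, which is precisely mutual independence of the site-columns $(\sigma_i^\cdot)$ in the limit; per-site de~Finetti then produces the $\mu_i$, and column independence plus exchangeability makes them i.i.d. Your route is somewhat more elementary (only de~Finetti, not the full row--column exchangeable theory) and makes the link ``multioverlap concentration $\Rightarrow$ site decoupling'' explicit; the paper's route is shorter given the Aldous--Hoover machinery already in place and gives a concrete functional form for the limit. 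Two small points worth making explicit in your write-up: (i) your step~1 extracts one subsequence, but since $f_T(N,\lambda^N)\to 0$ holds along the full sequence, steps~2--3 apply verbatim to \emph{every} subsequence on which both convergences hold, as the corollary requires; (ii) in the polarisation step, equating coefficients of $t_1\cdots t_m$ on both sides uses that along the chosen subsequence each coefficient already converges (all joint spin moments do), so the pointwise-in-$t$ convergence of bounded-degree polynomials upgrades to coefficient-wise convergence.
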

\begin{corollary}[Asymptotic decoupling]\label{cor:asymp_indep_disthm}
   Under the hypotheses of Theorem \ref{MainThG} the decoupling \eqref{decou} holds for every $k\geq 1$ and any collection $h_1,\dots,h_k:[-1,1] \to \R$ of continuous functions.
\end{corollary}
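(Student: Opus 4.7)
My plan is to reduce the statement to monomials via Stone--Weierstrass, then use the symmetry-between-spins hypothesis \eqref{spinSymm} to pass from fixed spin indices to empirical sums, and finally telescope the resulting product using the $n=1$ instance of Theorem~\ref{MainThG}. Since each $h_j$ is continuous on the compact interval $[-1,1]$, for every $\eps>0$ I pick polynomials $p_j$ with $\|h_j-p_j\|_\infty\le\eps$; combined with $|\sigma_j|\le 1$, this shows that both terms inside the absolute value in \eqref{decou} change by at most $Ck\eps$ (with $C$ depending only on $k$ and $\max_j\|h_j\|_\infty$) upon replacing each $h_j$ by $p_j$. By linearity, it therefore suffices to prove, for any integers $k_1,\ldots,k_k\ge 0$,
\begin{equation*}
\e_\lambda\Big|\e\la \prod_{j=1}^k \sigma_j^{k_j}\ra - \prod_{j=1}^k \e\la \sigma_j^{k_j}\ra\Big|\xrightarrow{N\to+\infty} 0.
\end{equation*}

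Next, spin exchangeability \eqref{spinSymm} gives $\e\la \prod_j \sigma_{i_j}^{k_j}\ra=\e\la \prod_j \sigma_j^{k_j}\ra$ for any distinct indices $i_1,\ldots,i_k\in\{1,\ldots,N\}$. Averaging over such tuples, with $(N)_k:=N(N-1)\cdots(N-k+1)$,
\begin{equation*}
\e\la \prod_j \sigma_j^{k_j}\ra = \frac{1}{(N)_k}\sum_{i_1\ne\cdots\ne i_k}\e\la \prod_j \sigma_{i_j}^{k_j}\ra = \e\la \prod_{j=1}^k R_1^{(k_j)}\ra + O(k^2/N),
\end{equation*}
where the remainder arises from extending the sum to all index tuples---the diagonal contribution being bounded by $k^2N^{k-1}$ since $|\sigma_i|\le 1$---and using $(N)_k/N^k=1+O(k^2/N)$. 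Similarly $\prod_j\e\la \sigma_j^{k_j}\ra=\prod_j\e\la R_1^{(k_j)}\ra$ exactly, since each single-spin Gibbs average is independent of the spin index by exchangeability.

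Finally, setting $\bar R^{(k_j)}:=\e\la R_1^{(k_j)}\ra\in[-1,1]$, the telescoping identity
\begin{equation*}
\prod_{j=1}^k R_1^{(k_j)} - \prod_{j=1}^k \bar R^{(k_j)} = \sum_{l=1}^k \Big(\prod_{j<l}\bar R^{(k_j)}\Big)\big(R_1^{(k_l)}-\bar R^{(k_l)}\big)\Big(\prod_{j>l}R_1^{(k_j)}\Big)
\end{equation*}
together with $|R_1^{(k_j)}|,|\bar R^{(k_j)}|\le 1$ yields, via Cauchy--Schwarz (or Jensen),
\begin{equation*}
\e_\lambda\Big|\e\la \prod_{j=1}^k R_1^{(k_j)}\ra - \prod_{j=1}^k \bar R^{(k_j)}\Big| \le \sum_{l=1}^k \sqrt{\e_\lambda\e\la (R_1^{(k_l)}-\bar R^{(k_l)})^2\ra}.
\end{equation*}
Each term on the right with $k_l\ge 1$ vanishes as $N\to+\infty$ by Theorem~\ref{MainThG} applied with $n=1$ and spin power $k_l$ (terms with $k_l=0$ are identically zero since $R_1^{(0)}=1$), and combining the three steps (taking $N\to+\infty$ first, then $\eps\to 0$) proves \eqref{decou}. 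There is no genuine obstacle: the whole content lies in Theorem~\ref{MainThG}, and the only point of mild care is to ensure the polynomial degrees in the Stone--Weierstrass step remain fixed independently of $N$, so that multioverlap concentration is applied to finitely many exponents for each fixed approximation error $\eps$.
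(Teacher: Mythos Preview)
Your argument is correct, and it takes a genuinely different (and more elementary) route than the paper's. The paper deduces the decoupling from Corollary~\ref{cor:asymp_spin_dist} via the Aldous--Hoover limit: along any convergent subsubsequence the asymptotic representation $\tilde\sigma_\lambda(v_i,x_{i,\ell})$ is independent of $w$ and $u$, which forces the difference in \eqref{decou} to converge to $0$ in distribution; a subsequence-of-subsequence argument in the Prokhorov metric plus boundedness then upgrades this to $L^2$. Your approach instead stays finite-$N$ throughout: Stone--Weierstrass reduces to monomials, exchangeability \eqref{spinSymm} converts the fixed-index product into a product of generalised magnetisations $R_1^{(k_j)}$ up to $O(k^2/N)$, and a telescoping plus Cauchy--Schwarz bound reduces everything to the concentration of each $R_1^{(k_l)}$ separately. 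What your route buys is that it only invokes the $n=1$ case of Theorem~\ref{MainThG}, i.e.\ concentration of the generalised magnetisations, which (as in Section~\ref{sec:magn}) follows directly from the Nishimori identity and the factorised prior without any of the Franz--de Sanctis machinery; in particular it makes transparent that the decoupling statement does not actually require multioverlap concentration for $n\ge 2$. The paper's route, on the other hand, packages the conclusion neatly as a consequence of the structural Corollary~\ref{cor:asymp_spin_dist}, which is arguably the conceptually natural way to see why decoupling holds.
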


\subsection{Outline of the paper} In the next Section \ref{ProofBin}, we will consider the case of binary spins and we will divide the proof into several subsections. We will first prove that the magnetisation concentrates by the Nishimori identity, and then recall a well-known proof of the overlap concentration based on the Nishimori identities and gaussian perturbation. After that, we will consider the case of general multioverlaps. We will start with a rigorous proof of the Franz-de Sanctis identities \cite{FdS} (in our setting) based on the exponential perturbation we introduced above. Then we will pass to the limit and rephrase everything in terms of the Aldous-Hoover representation. Finally, we will derive a consequence of the Franz-de Sanctis identities in this asymptotic language and show that it forces all multioverlaps to concentrate as long as the standard two-replicas overlap concentrates. In Section \ref{ProofSoft}, we will go over similar steps for general soft spins. There is some extra complexity in the Aldous-Hoover representation in this case, which is the reason why we present the case of binary spins first, namely, to illustrate the main ideas without unnecessary technicalities.

\section{The case of Ising spins: proof of Theorem~\ref{MainTh}}\label{ProofBin}
In this section we prove our main concentration theorem for the simpler binary spins case, whose proof already contains all the necessary ingredients for later generalisation to soft spins. In fact, in the case of soft spins almost all proofs will be identical by replacing a spin $\sigma_i$ by a generalised spin given by a polynomial $P(\sigma_i)$, so these proofs will not be repeated later on. We write these proofs for binary spins only to simplify  notation, and we note that the binary nature of spins will never really be used until Section \ref{SecAHb1} dealing with the asymptotic Aldous-Hoover representation. Only this part will be somewhat different in the case of soft spins and the corresponding modifications will then be explained. The proof will proceed by contradiction, assuming that (\ref{MOconcentration}) does not go to zero along some subsequence. Then, along a further subsubsequence, we will make sure that good properties hold in the limit (such as the Franz-de Sanctis identities) and imply that the multioverlaps must concentrate, leading to a contradiction.

We start by considering the magnetisation and the usual overlap before moving to the novelty, namely the treatment of the higher order multioverlaps.
\subsection{Magnetisation, $n=1$.}\label{sec:magn}
Proving concentration of the magnetisation $R_1:=N^{-1}\sum_{i\le N}\sigma_i$ is very simple and follows directly from the Nishimori identity. Denote $R_*:=N^{-1}\sum_{i\le N}\sigma_i^*$. Then the Nishimori identity \eqref{Nishi} implies 
$$
\e\la R_{1} \ra = \e R_*\,, \qquad \e\la R_{1}^2 \ra = \e R_*^2\,.
$$
Under the assumption \eqref{ProductM} of factorisation of the prior the entries of $\sigma^*\sim P^*$ are independent. As their variance is bounded by $1$,
\begin{align*}
&{\rm Var}(R_1)=\e\big\la (R_{1} -\e\la R_{1} \ra)^2\big\ra=\e(R_{*} -\e R_{*})^2\le 1/N\,.
\end{align*}
Taking the average over $\lambda$ of this inequality proves Theorem~\ref{MainTh} for $n=1$.

\subsection{Overlap, $n=2$: proof of Theorem~\ref{OverlapConc}}\label{sec:Overlap2Conc} The proof given here is now standard (see, e.g., \cite{BarbierMacris2019,barbier2017phase}). We directly prove the result for soft spins, with certainty $\sigma_i,\sigma_i^*\in[-1,1]$, as it makes essentially no difference. 

For this section it is convenient to introduce $\lambda_{0,N}:= \eps_N\lambda_0\in [\eps_N/2,\eps_N]$.
Let 
\begin{align}
\mathcal{H}' := \frac{d\mathcal{H}_N^{\rm gauss}(\sigma,\lambda_0)}{d\lambda_{0,N}}=\sigma\cdot \sigma^* + \frac{\sigma\cdot Z}{2\sqrt{\lambda_{0,N}}}-\frac{\|\sigma\|^2}{2} \,.\label{def_L}
\end{align}
The overlap fluctuations are upper bounded by those of $\mathcal{L}:=\mathcal{H}'/N$, which are easier to control, as
\begin{align}
\mathbb{E}\big\langle (R_{1,2} - \mathbb{E}\langle R_{1,2} \rangle)^2\big\rangle \le 4\,\mathbb{E}\big\langle (\mathcal{L} - \mathbb{E}\langle \mathcal{L}\rangle)^2\big\rangle\,.\label{remarkable}
\end{align}
A detailed derivation of this inequality can be found in the Appendix and involves only elementary algebra using the Nishimori identity and integrations by parts with respect to the gaussian noise $Z$. Recall definition \eqref{pertFreeEnergy} for the free energy. We have the following identities: for any given realisation of the quenched disorder
\begin{align}
 \frac{dF_N^{\rm pert}}{d\lambda_{0,N}}  = \langle \mathcal{H}' \rangle \,,\quad \text{and}\quad \frac{d^2F_N^{\rm pert}}{d\lambda_{0,N}^2}  &= \big\langle (\mathcal{H}'  - \langle \mathcal{H}' \rangle)^2\big\rangle-
 \frac{\langle \sigma\rangle \cdot Z}{4 \lambda_{0,N}^{3/2}} \,.\label{second-derivative}
\end{align}
The gaussian integration by part formula $\e[Zg(Z)]=\e\, g'(Z)$ for $g$ bounded and $Z\sim \mathcal{N}(0,1)$ yields
\begin{align}
\frac{1}{\sqrt{\lambda_{0,N}}}\mathbb{E}\big[\langle  \sigma \rangle\cdot Z\big] 
   =   \mathbb{E}\big\langle \|\sigma\|^2 \big\rangle - \E\|\langle \sigma \rangle\|^2 
   \label{NishiTildeZ}\,.
\end{align}
Averaging these identities (all domination conditions to exchange expectation and derivatives are met) and using again the Nishimori identity and gaussian integration by parts we find 
\begin{align}
 \frac{d\,\e F_N^{\rm pert}}{d\lambda_{0,N}} &= \mathbb{E}\langle \mathcal{H}' \rangle =\frac{N}{2}\mathbb{E}\langle R_{1,2}\rangle\,, \quad \text{and}\quad \frac{d^2\e F_N^{\rm pert}}{d\lambda_{0,N}^2} = \mathbb{E}\big\langle (\mathcal{H}' - \langle \mathcal{H}' \rangle)^2\big\rangle
 -\frac{1}{4\lambda_{0,N}} \mathbb{E}\big\langle \|\sigma- \langle \sigma \rangle\|^2\big\rangle\,.\label{second-derivative-average}
\end{align} 
The first derivative above can also be obtained by linking the free energy and mutual information $I(\sigma^*;W\mid\theta,\lambda,\pi,(i_{jk}),\eps_N,s_N)=- \e F_N^{\rm pert}(\lambda)+C$ for some $C$ independent of $\lambda_0$, followed by a direct application of the I-MMSE relation \cite{GuoShamaiVerdu_IMMSE}. The concentration of the overlap Theorem~\ref{OverlapConc} is then a direct consequence of the following result (combined with Fubini's Theorem) and \eqref{remarkable}:
\begin{proposition}[Fluctuations of $\mathcal{L}$]\label{L-concentration} Let $\lambda_0\sim\mathcal{U}[1/2,1]$. If $v_N/(N\eps_N)\to 0$ then there exists an absolute constant $C>0$ such that $$\e_{\lambda_0}\mathbb{E}\big\langle (\mathcal{L} - \mathbb{E}\langle \mathcal{L}\rangle)^2\big\rangle \le \frac{C}{\eps_N}\Big(\frac{v_N}{N\eps_N}+\frac{1}{N}\Big)^{1/3}\,.$$
\end{proposition}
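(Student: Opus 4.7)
The plan is to decompose the fluctuations of $\mathcal{L}$ into thermal and quenched parts,
\[
\E\la(\mathcal{L}-\E\la\mathcal{L}\ra)^2\ra \;=\; \underbrace{\E\la(\mathcal{L}-\la\mathcal{L}\ra)^2\ra}_{\text{thermal}} + \underbrace{\E(\la\mathcal{L}\ra - \E\la\mathcal{L}\ra)^2}_{\text{quenched}},
\]
and to bound each piece after averaging $\lambda_{0,N}=\eps_N\lambda_0$ uniformly over $[\eps_N/2,\eps_N]$.

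The thermal part is immediate from the identity \eqref{second-derivative-average} combined with the Nishimori-type Gaussian integration by parts \eqref{NishiTildeZ}, which together yield
\[
\E\la(\mathcal{L}-\la\mathcal{L}\ra)^2\ra \;=\; \frac{1}{N^2}\frac{d^2\E F_N^{\rm pert}}{d\lambda_{0,N}^2} \;+\; \frac{\E\la\|\sigma-\la\sigma\ra\|^2\ra}{4N^2\lambda_{0,N}}.
\]
Integrating in $\lambda_{0,N}$ telescopes the first term into a boundary difference of $d\E F_N^{\rm pert}/d\lambda_{0,N}=\tfrac{N}{2}\E\la R_{1,2}\ra$, bounded by $N$ in absolute value; the second term is bounded using $\int\lambda_{0,N}^{-1}\,d\lambda_{0,N}=\ln 2$ and the uniform estimate $\|\sigma-\la\sigma\ra\|^2\le 4N$. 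Dividing by the length $\eps_N/2$ of the averaging interval gives a thermal contribution of order $1/(N\eps_N)$.

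The main obstacle is the quenched part: a natural convexity-based approach to transferring fluctuations of $F_N^{\rm pert}$ to its derivative fails, because $F_N^{\rm pert}$ is \emph{not} convex in $\lambda_{0,N}$ due to the indefinite-sign term $-\la\sigma\ra\cdot Z/(4\lambda_{0,N}^{3/2})$ in its second derivative \eqref{second-derivative}. The key trick is to absorb this term into a data-only correction by introducing
\[
\tilde F_N(\lambda_{0,N}) \;:=\; F_N^{\rm pert}(\lambda_{0,N}) \;-\; \sqrt{N}\,\|Z\|\sqrt{\lambda_{0,N}}.
\]
Its second derivative picks up an additional $+\sqrt{N}\|Z\|/(4\lambda_{0,N}^{3/2})$, which by Cauchy-Schwarz and $\|\la\sigma\ra\|\le\sqrt{N}$ dominates the offending negative piece, so both $\tilde F_N$ and $\bar{\tilde F}_N:=\E\tilde F_N$ are convex in $\lambda_{0,N}$. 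The fluctuations of $\tilde F_N$ stay under control: the subtracted correction is $\sqrt{N\lambda_{0,N}}$-Lipschitz in $Z$, so the Gaussian Poincar\'e inequality gives $\Var(\|Z\|)\le 1$; combined with the hypothesis $\E(F_N^{\rm pert}-\E F_N^{\rm pert})^2\le Nv_N$ this yields $\E(\tilde F_N-\bar{\tilde F}_N)^2 \le C(Nv_N+N\eps_N)$ uniformly on the interval.

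With convexity in hand, apply the standard derivative-to-value comparison lemma: for convex $g,\bar g$, the elementary bounds $(g(\lambda+u)-g(\lambda))/u\ge g'(\lambda)\ge (g(\lambda)-g(\lambda-u))/u$ together with $(\bar g(\lambda\pm u)-\bar g(\lambda))/(\pm u)\gtrless \bar g'(\lambda\pm u)$ give, for any $u>0$ and $\lambda_{0,N}$ away from the endpoints,
\[
|\tilde F_N'(\lambda_{0,N})-\bar{\tilde F}_N'(\lambda_{0,N})| \le [\bar{\tilde F}_N'(\lambda_{0,N}+u)-\bar{\tilde F}_N'(\lambda_{0,N}-u)] + \frac{C}{u}\!\!\sum_{h\in\{-u,0,u\}}\!\!|\delta(\lambda_{0,N}+h)|,
\]
with $\delta:=\tilde F_N-\bar{\tilde F}_N$. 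Squaring and averaging over $\lambda_0$, the $\delta$-terms contribute $O((Nv_N+N\eps_N)/u^2)$, while the first contributes $O(uN^2/\eps_N^2)$ after telescoping $\int[\bar{\tilde F}_N'(\lambda+u)-\bar{\tilde F}_N'(\lambda-u)]\,d\lambda$ via monotonicity of $\bar{\tilde F}_N'$ together with the bound $\|\bar{\tilde F}_N'\|_\infty = O(N/\sqrt{\eps_N})$. Balancing in $u$ yields $u\sim(\eps_N^2(v_N+\eps_N)/N)^{1/3}$ --- this is exactly where the exponent $1/3$ arises --- and a quenched-fluctuation bound of order $\eps_N^{-4/3}((v_N+\eps_N)/N)^{1/3}$ for $\tilde F_N'$. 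Returning from $\tilde F_N'$ to $F_N^{\rm pert\,'}$ costs only an additional $O(N/\eps_N)$ from $\Var(\|Z\|)\le 1$, and dividing by $N^2$ to pass from $\mathcal{H}'$ to $\mathcal{L}$ reproduces precisely the claimed $\frac{C}{\eps_N}\big(\frac{v_N}{N\eps_N}+\frac{1}{N}\big)^{1/3}$.
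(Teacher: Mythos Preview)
Your proof is correct and follows essentially the same route as the paper: the thermal/quenched decomposition, the thermal bound via integrating the second-derivative identity, and the quenched bound via a convexified free energy plus the standard convexity lemma transferring value fluctuations to derivative fluctuations, with the $1/3$ exponent arising from the same optimization in the step size $u$. The only noteworthy difference is the convexifying correction: the paper subtracts $\sqrt{\lambda_{0,N}}\sum_{i\le N}|Z_i|$ (so that $-\la\sigma\ra\cdot Z+\sum_i|Z_i|\ge 0$ follows from $|\la\sigma_i\ra|\le 1$ coordinatewise, and the variance of the correction is controlled by independence of the $|Z_i|$), whereas you subtract $\sqrt{N\lambda_{0,N}}\,\|Z\|$ (so that $-\la\sigma\ra\cdot Z+\sqrt{N}\|Z\|\ge 0$ follows from Cauchy--Schwarz, and $\Var(\|Z\|)\le 1$ from the Gaussian Poincar\'e inequality). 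Both choices do the job and lead to the same final bound; the paper's version is slightly more elementary in that it avoids Poincar\'e, while yours is slightly cleaner notationally.
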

\begin{proof}
The proof of this proposition is broken in two parts, using the decomposition
\begin{align*}
\mathbb{E}\big\langle (\mathcal{L} - \mathbb{E}\langle \mathcal{L}\rangle)^2\big\rangle
& = 
\mathbb{E}\big\langle (\mathcal{L} - \langle \mathcal{L}\rangle)^2\big\rangle
+ 
\mathbb{E}(\langle \mathcal{L}\rangle - \mathbb{E}\langle \mathcal{L}\rangle)^2\,.
\end{align*}
The first type of fluctuations are with respect to the posterior distribution (or ``thermal fluctuations''), while the second fluctuations are ``quenched fluctuations'' with respect to the quenched randomness. We start with the first type, and prove, for $\lambda_0\sim\mathcal{U}[1/2,1]$,
\begin{align}
\e_{\lambda_0}\mathbb{E} \big\langle (\mathcal{L} - \langle \mathcal{L}\rangle)^2 \big\rangle  \le \frac{4+\ln2}{2N\eps_N}\,.  \label{thermalL}
\end{align}
By \eqref{second-derivative-average} we have
\begin{align*}
\int_{\eps_N/2}^{\eps_N}d\lambda_{0,N}\mathbb{E}\big\langle (\mathcal{H}' - \langle \mathcal{H}' \rangle)^2\big\rangle
&\leq \int_{\eps_N/2}^{\eps_N}d\lambda_{0,N}\Big(\frac{N}{4\lambda_{0,N}}+\frac{d^2\e F_N^{\rm pert}}{d\lambda_{0,N}^2}\Big) =
\frac{d\,\e F_N^{\rm pert}}{d\lambda_{0,N}} 
\Big|_{\lambda_{0,N}=\eps_N/2}^{\lambda_{0,N}=\eps_N}+ \frac{N\ln 2}{4}\,.
\end{align*}
By \eqref{second-derivative-average} the difference of derivatives is certainly smaller in absolute value than $N$. By changing back to $\lambda_0=\lambda_{0,N}/\eps_N$ and dividing by $1/2$ to construct the average over $\lambda_0$ then by $N^2$ gives \eqref{thermalL}.

Next we prove 
\begin{align} 
  \mathbb{E}_{\lambda_0}\mathbb{E}(\langle \mathcal{L}\rangle - \mathbb{E}\langle \mathcal{L}\rangle)^2 \le \frac{C}{\eps_N}\Big(\frac{v_N}{N\eps_N}+\frac{1}{N}\Big)^{1/3} \,.
\end{align}
 The proof is based on convexity arguments, so we need first to introduce proper $\lambda_{0,N}$-convex versions of the free entropy (and its expectation). Consider the following functions of $\lambda_{0,N}$:
\begin{align}\label{new-free}
 & \tilde F(\lambda_{0,N}) := \frac1N\big(F^{\rm pert}_N(\lambda_{0,N}) -\sqrt{\lambda_{0,N}} \sum_{i\le N}\vert Z_i\vert\big)\,, \quad \mathbb{E} \tilde F(\lambda_{0,N}):=  \frac1N\e F^{\rm pert}_N(\lambda_{0,N}) - \sqrt{\lambda_{0,N}}\, \mathbb{E}\,\vert Z_1\vert\,.
\end{align}
Because of 
\eqref{second-derivative} we see that the second derivative of $\tilde F(\lambda_{0,N})$ is non-negative so that it is convex. Evidently $\e \tilde F(\lambda_{0,N})$ is convex too.
Convexity then allows to use the following standard lemma:
\begin{lemma}[A bound for convex functions]\label{lemmaConvexity}
Let $G(x)$ and $g(x)$ be convex functions. Let $\delta>0$ and define $C^{-}_\delta(x) := g'(x) - g'(x-\delta) \geq 0$ and $C^{+}_\delta(x) := g'(x+\delta) - g'(x) \geq 0$. Then
\begin{align*}
|G'(x) - g'(x)| \leq \delta^{-1} \sum_{u \in \{x-\delta,\, x,\, x+\delta\}} |G(u)-g(u)| + C^{+}_\delta(x) + C^{-}_\delta(x)\,.
\end{align*}
\end{lemma}
First, from \eqref{new-free}, and letting $A_N := N^{-1}\sum_{i\le N} (\vert  Z_i\vert -\mathbb{E}\,\vert  Z_1\vert)$, we have 
\begin{align}\label{fdiff}
 \tilde F(\lambda_{0,N}) - \e\tilde F(\lambda_{0,N}) =\frac1N \big(F^{\rm pert}_N(\lambda_{0,N}) - \e F_N^{\rm pert}(\lambda_{0,N})\big) - \sqrt{\lambda_{0,N}}  A_N\,.
\end{align} 
Second, from \eqref{second-derivative}, \eqref{second-derivative-average} we obtain for the $\lambda_{0,N}$-derivatives
\begin{align}\label{derdiff}
\tilde F'(\lambda_{0,N}) - \e \tilde F'(\lambda_{0,N}) = 
\langle \mathcal{L} \rangle-\mathbb{E}\langle \mathcal{L} \rangle - \frac{A_N}{2\sqrt{\lambda_{0,N}}} \,.
\end{align}
From \eqref{fdiff} and \eqref{derdiff} it is then easy to show that Lemma~\ref{lemmaConvexity} implies
\begin{align}\label{usable-inequ}
\vert \langle \mathcal{L}\rangle - \mathbb{E}\langle \mathcal{L}\rangle\vert&\leq 
\delta^{-1} \sum_{u\in\, \mathcal{U}}
 \Big(\frac{\vert F_N^{\rm pert}(u) - \e F_N^{\rm pert}(u) \vert}N + \vert A_N \vert \sqrt{u} \Big)+ C_\delta^+(\lambda_{0,N}) + C_\delta^-(\lambda_{0,N}) + \frac{\vert A_N\vert}{2\sqrt{\lambda_{0,N}}} 
\end{align}
where $\mathcal{U}:=\{\lambda_{0,N} -\delta,\, \lambda_{0,N},\, \lambda_{0,N}+\delta\}$ and $$C_\delta^-(\lambda_{0,N}):= \e\tilde F'(\lambda_{0,N})-\e\tilde F'(\lambda_{0,N}-\delta)\ge 0\,,\qquad C_\delta^+(\lambda_{0,N}):=\e\tilde F'(\lambda_{0,N}+\delta)-\e\tilde F'(\lambda_{0,N})\ge 0\,.$$ 
Note that $\delta$ will 
be chosen later on strictly smaller than $\eps_N/2$ so that $\lambda_{0,N} -\delta$ remains 
positive. Remark that by independence of the noise variables $\mathbb{E}A_N^2 \le 1/N$. 
We square the identity \eqref{usable-inequ} and take its expectation. Then using $(\sum_{i\le p}v_i)^2 \le p\sum_{i\le p}v_i^2$ as well as definition \eqref{vNfreeEnergy},
\begin{align}\label{intermediate}
 \frac{1}{9}\mathbb{E}(\langle \mathcal{L}\rangle - \mathbb{E}\langle \mathcal{L}\rangle)^2
 &
 \leq \, 
 \frac{3}{N\delta^2} (v_N +\eps_N+\delta)+ C_\delta^+(\lambda_{0,N})^2 + C_\delta^-(\lambda_{0,N})^2
 + \frac{1}{2N\lambda_{0,N}} \,.
\end{align}
By \eqref{second-derivative-average} and \eqref{new-free} we have
\begin{align}
|\e\tilde F'(\lambda_{0,N})|  \leq \frac 12\Big(1  +\frac{1}{\sqrt{\lambda_{0,N}}} \Big) \quad \mbox{and thus} \quad|C_\delta^\pm(\lambda_{0,N})|\le 1  +\frac{1}{\sqrt{\eps_N/2-\delta}}\,,\label{boudfprime}  
\end{align}
because $|C_\delta^\pm(\lambda_{0,N})|=|\e\tilde F'(\lambda_{0,N}\pm\delta)-\e\tilde F'(\lambda_{0,N})|$. We reach
\begin{align*}
 \int_{\eps_N/2}^{\eps_N} d\lambda_{0,N}\, \big\{C_\delta^+(\lambda_{0,N})^2 + C_\delta^-(\lambda_{0,N})^2\big\}
 &\leq 
 \Big(1  +\frac{1}{\sqrt{\eps_N/2-\delta}}\Big)
 \int_{\eps_N/2}^{\eps_N} d\lambda_{0,N}\, \big\{C_\delta^+(\lambda_{0,N}) + C_\delta^-(\lambda_{0,N})\big\}
 \nonumber \\  
&\hspace{-4cm}= \Big(1  +\frac{1}{\sqrt{\eps_N/2-\delta}}\Big)\Big[\big(\e\tilde F(\eps_N/2+\delta) -\e \tilde F(\eps_N/2-\delta)\big)+ \big(\e\tilde F(\eps_N-\delta) - \e\tilde F(\eps_N+\delta)\big)\Big]\,.
\end{align*}
%
The mean value theorem and \eqref{boudfprime} 
imply $$|\e\tilde F(\lambda_{0,N}-\delta) - \e\tilde F(\lambda_{0,N}+\delta)|\le \delta\Big(1  +\frac{1}{{\sqrt{\eps_N/2-\delta}}}\Big)$$ for $\lambda_{0,N}\in[\eps_N/2,\eps_N]$. Therefore, setting $\delta  = \delta_N$ such that $1>\delta_N/ \eps_N\to 0$ and recalling $\eps_N<1$,
\begin{align*}
\int_{\eps_N/2}^{\eps_N} d\lambda_{0,N}\, \big\{C_\delta^+(\lambda_{0,N})^2 + C_\delta^-(\lambda_{0,N})^2\big\}\leq 
 2\delta_N  \Big(1 +\frac{1}{\sqrt{\eps_N/2-\delta_N}}\Big)^2\le 4\delta_N\frac{\eps_N/2-\delta_N+1}{\eps_N/2-\delta_N}\le \frac{8\delta_N}{\eps_N/2-\delta_N}\,.
\end{align*}
 Thus, integrating \eqref{intermediate} yields
\begin{align*}
\int_{\eps_N/2}^{\eps_N} d\lambda_{0,N}\ 
 \mathbb{E}(\langle \mathcal{L}\rangle - \mathbb{E}\langle \mathcal{L}\rangle)^2&\leq \frac{27\eps_N}{2N\delta_N^2}(v_N+2\eps_N) +\frac{144\delta_N}{\eps_N} +  \frac{9 \ln 2}{4N}+O\Big(\frac{\delta_N^2}{\eps_N^2}\Big)\,.
\end{align*}
Finally we optimise the bound choosing $\delta_N^3=  \Theta(\eps_N^2(v_N+\eps_N)/N)$. Then one can verify, recalling that $N\eps_N\to+\infty$, that the condition $\delta_N/\eps_N \to 0$ is indeed verified. The dominating term $\delta_N/\eps_N$ gives the result (once re-expressing the bound in terms of $\lambda_0=\lambda_{0,N}/\eps_N$). 
\end{proof}

\subsection{Multioverlaps, $n\ge 3$}
The proof of multioverlap concentration \eqref{MOconcentration} for all $n\geq 3$ is based on a new version of the Franz-de Sanctis identities from \cite{FdS}, adapted to the context of inference, and based on the exponential channel \eqref{Exp_channel} which is a novelty of the present contribution\footnote{The similarity with the Franz-de Sanctis identities from \cite{FdS} comes from the fact that we consider a Poisson number of such side-observations $Y^{\rm exp}_{jk}$, whose numbers are controlled by $(\pi_k)$ which are Poisson distributed. In their paper Franz and de Sanctis also introduce a Poisson number of perturbations but which are of the $p$-spin form, that is a canonical perturbation in spin glass literature.}.
\begin{theorem}[Franz-de Sanctis identities in inference]\label{ThFdSiden}
Assume \eqref{ProductM} and \eqref{spinSymm} hold and $s_N\leq N$. Let $i$ be a uniform index (averaged over by $\e_i$ included in $\e$ below), and define
\begin{align}
\theta_{ik}^\ell:= \ln(1+\lambda_k \sigma_i^\ell)-\lambda_k y_{ik} \sigma_i^\ell\,,\quad
y_{ik}:=\frac{\xi}{1+\lambda_k \sigma_i^*}\,,\quad\text{and}\quad
\dT_{ik}^\ell:= \frac{y_{ik}\sigma_i^\ell}{1+\lambda_k \sigma_i^*} \label{defs_FdS_0}  
\end{align}
with $\xi\sim {\rm Exp}(1)$ independently of everything else. Then, for any $k\ge 1$ and any function $f_n$ of finitely many spins on $n$ replicas and of the signal $\sigma^*$ such that $|f_n|\leq 1$,
\begin{equation}
\e_{\lambda} \left|
\e \frac{\la{f_n  \dT_{ik}^1e^{\sum_{\ell\le n} \theta_{ik}^\ell}}\ra}{\la{e^{\theta_{ik}}}\ra^n}
- \e \la{f_n }\ra
\e \frac{\la{\dT_{ik}e^{\theta_{ik}}}\ra}{\la{e^{\theta_{ik}}}\ra}
\right| \leq \Big\{\frac{2\times 10^3+2^{k+6}}{s_N}+4\times 10^4\times\Big(\frac{v_N N}{s_N^2}\Big)^{1/3}\Big\}^{1/2}.
\label{eqFdSpertfTh}
\end{equation}
\end{theorem}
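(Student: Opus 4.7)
The main structural observation is that $e^{\theta_{ik}(\sigma_i)}/\la e^{\theta_{ik}}\ra$ is exactly the Bayesian update factor produced by appending one more exponential side-observation $Y=\xi/(1+\lambda_k\sigma^*_i)$ to the Poisson stream of \eqref{Exp_channel}. Consequently the LHS of \eqref{eqFdSpertfTh} is the Gibbs expectation of $f_n\dT_{ik}^1$ in the $n$-replicated cavity system where each replica has absorbed this extra observation, while the RHS is its naive product factorization. The difference is thus essentially a covariance between $\la f_n\ra$ and a cavity observable $\la\dT_{ik}\ra_{\text{cav}}$, and since $|f_n|\le 1$, Cauchy--Schwarz reduces the theorem to a variance bound on $\la\dT_{ik}\ra_{\text{cav}}$ averaged over $\lambda$.

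The plan is to identify this cavity Gibbs average with a derivative of the perturbed free entropy and then invoke the concentration hypothesis $\var(F_N^{\rm pert})\le v_N N$ along exactly the lines of Proposition~\ref{L-concentration}. Via the Mecke--Campbell formula for the Poisson point process $(i_{jk},\xi_{jk})_{j\le\pi_k}$, the $\lambda_k$-derivative of $F_N^{\rm pert}$ is, after averaging over the exponential noise, $s_N$ times an expected single-observation contribution involving $\dT_{ik}$. A Nishimori symmetrization \eqref{Nishi_minimal} then swaps the factor $\sigma^*_i$ hidden inside $y_{ik}$ for a fresh replica, producing precisely the $\dT_{ik}^1\,e^{\sum_{\ell\le n}\theta_{ik}^\ell}/\la e^{\theta_{ik}}\ra^n$ structure of \eqref{eqFdSpertfTh}. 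Hence, showing that $\partial_{\lambda_k}F_N^{\rm pert}/s_N$ is concentrated around its $\lambda$-average translates directly into the decoupling statement.

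This concentration proceeds exactly as in the proof of Proposition~\ref{L-concentration}: I would split fluctuations into a ``thermal'' piece inside $\la\cdot\ra$, bounded by integrating $\partial_{\lambda_k}^2\E F_N^{\rm pert}$ over $\lambda_k\in[2^{-k-1},2^{-k}]$ and telescoping, which produces the $2^{k+6}/s_N$ contribution whose $k$-dependence reflects the $2^{-k-1}$ width of that integration interval, and a ``quenched'' piece outside $\la\cdot\ra$, bounded by the convexity-and-finite-difference argument of Lemma~\ref{lemmaConvexity} applied to $\lambda_k\mapsto F_N^{\rm pert}$ minus its Poisson-linear part; optimizing the truncation scale $\delta\sim(v_N N/s_N^2)^{1/3}$ then yields that exponent. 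The residual $2\times 10^3/s_N$ absorbs Mecke boundary corrections from the Poisson fluctuations of $\pi_k$ itself, and a final $\E_\lambda|X|\le(\E_\lambda X^2)^{1/2}$ Cauchy--Schwarz produces the overall square root on the RHS of \eqref{eqFdSpertfTh}.

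The main obstacle will be uniform moment control of $\dT_{ik}^1\prod_{\ell\le n}e^{\theta_{ik}^\ell}$ over $\lambda_k\in[2^{-k-1},2^{-k}]$: the exponential noise $\xi$ is only sub-exponentially integrable, the prefactor $(1+\lambda_k\sigma^*_i)^{-2}$ can reach $4$ at $\sigma^*_i=-1$, and these factors compound across the $n$ replicas and the exponential weight in the numerator. Unlike the Gaussian perturbation of Proposition~\ref{L-concentration}, there is no Stein identity available for the exponential channel, so the Mecke formula must do all the work of producing the centred cavity observable; this is what is responsible for the relatively large numerical constants appearing in the final bound.
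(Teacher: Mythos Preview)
Your proposal is correct and follows essentially the same route as the paper: the Mecke--Campbell identity you invoke is precisely the explicit Poisson computation the paper carries out to rewrite the cavity expression as $\e\la f_n\tilde{\mathcal{L}}_k(\sigma^1)\ra$, and the concentration of $\tilde{\mathcal{L}}_k$ via the thermal/quenched split is Proposition~\ref{thm:concen_der}, after which Cauchy--Schwarz finishes exactly as you describe. Two minor corrections: no Nishimori swap is needed at this stage, since the target \eqref{defs_FdS_0} still contains $\sigma^*$ (the paper applies Nishimori only later, in passing to the limit); and the moment-control ``obstacle'' you flag dissolves once the Mecke identity is in place, because the resulting $\e\la f_n\tilde{\mathcal{L}}_k\ra$ is manifestly integrable with $\e|\tilde{\mathcal{L}}_k|\le 4$.
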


Let us denote 
\begin{align}
\mathcal{H}_k':= \frac{d\mathcal{H}^{\rm exp}_N(\sigma,\lambda)}{d\lambda_k}=\sum_{j\le \pi_k} \sigma_{i_{jk}}\Big(\frac{1}{1+\lambda_k\sigma_{i_{jk}}}-\frac{\xi_{jk}}{(1+\lambda_k\sigma_{i_{jk}}^*)^2} \Big)\,, \quad\text{and} \quad \mathcal{L}_k := \frac{\mathcal{H}_k'}{s_N}\,.\label{Hkprime}
\end{align}
Define also
\begin{align}
\tilde{\mathcal{L}}_k:=\frac1{s_N}\sum_{j\le \pi_k} \frac{\sigma_{i_{jk}} \xi_{jk}}{(1+\lambda_k\sigma_{i_{jk}}^*)^2}\,.\label{Lkprime}
\end{align}
The Franz-de Sanctis identities are a corollary of the following key result.
\begin{proposition}[Fluctuations of $\mathcal{L}_k$ and $\tilde{\mathcal{L}}_k$]\label{thm:concen_der}
Recall \eqref{vNfreeEnergy}. We have
\begin{align}
  \mathbb{E}_{\lambda_k}\mathbb{E}\big\langle (\mathcal{L}_k - \mathbb{E}\langle \mathcal{L}_k\rangle)^2 \big\rangle \le \frac{2\times 10^3+2^{k+6}}{s_N} + 2\times 10^4\times (s_N^{-2}(v_N N+s_N))^{1/3}\,.
\end{align}
In the case where $v_N\le v$ for some constant $v>0$ and recalling condition \eqref{sNcond} this gives 
\begin{align}
  \mathbb{E}_{\lambda_k}\mathbb{E}\big\langle (\mathcal{L}_k - \mathbb{E}\langle \mathcal{L}_k\rangle)^2 \big\rangle \le \frac{2\times 10^3+2^{k+6}}{s_N}+4\times 10^4\times\Big(\frac{vN}{s_N^2}\Big)^{1/3}\,.
\end{align}
As a consequence, and still in the case $v_N\le v$,
\begin{align}
  \mathbb{E}_{\lambda_k}\mathbb{E}\big\langle (\tilde{\mathcal{L}}_k - \mathbb{E}\langle \tilde{\mathcal{L}}_k\rangle)^2 \big\rangle \le \frac{2\times 10^3+2^{k+6}+40}{s_N}+4\times 10^4\times\Big(\frac{vN}{s_N^2}\Big)^{1/3}\,.\label{conc_Lprime}
\end{align}
\end{proposition}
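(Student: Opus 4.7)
The proof parallels that of Proposition \ref{L-concentration}, with $\lambda_k$ and $s_N$ playing the roles of $\lambda_{0,N}$ and $N\eps_N$. Decompose
\[
\mathbb{E}\big\langle(\mathcal{L}_k - \mathbb{E}\langle\mathcal{L}_k\rangle)^2\big\rangle = \mathbb{E}\big\langle(\mathcal{L}_k - \langle\mathcal{L}_k\rangle)^2\big\rangle + \mathbb{E}\big(\langle\mathcal{L}_k\rangle - \mathbb{E}\langle\mathcal{L}_k\rangle\big)^2
\]
into thermal and quenched contributions and use throughout the identities
\[
\frac{dF_N^{\rm pert}}{d\lambda_k} = \langle\mathcal{H}_k'\rangle,\qquad \frac{d^2F_N^{\rm pert}}{d\lambda_k^2} = \big\langle(\mathcal{H}_k' - \langle\mathcal{H}_k'\rangle)^2\big\rangle + \langle\mathcal{H}_k''\rangle,
\]
where a direct differentiation gives $\mathcal{H}_k'' = -\sum_{j\le \pi_k}\sigma_{i_{jk}}^2/(1+\lambda_k\sigma_{i_{jk}})^2 + \sum_{j\le \pi_k}2\xi_{jk}\sigma_{i_{jk}}\sigma_{i_{jk}}^*/(1+\lambda_k\sigma_{i_{jk}}^*)^3$. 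Since $|\sigma|,|\sigma^*|\le 1$ and $\lambda_k\le 2^{-k}\le 1/2$, we have $1/|1\pm\lambda_k|\le 2$, from which the deterministic pointwise bounds $|\mathcal{H}_k'|\le 2\pi_k+4\sum_{j\le\pi_k}\xi_{jk}$ and $|\mathcal{H}_k''|\le 4\pi_k + 16\sum_{j\le\pi_k}\xi_{jk}$ follow. For the thermal part, integrating the second-derivative identity on $[2^{-k-1},2^{-k}]$ telescopes to $\mathbb{E}\langle\mathcal{H}_k'\rangle\big|_{2^{-k-1}}^{2^{-k}} - \int \mathbb{E}\langle\mathcal{H}_k''\rangle\, d\lambda_k$; both pieces are $O(s_N)$ by the pointwise bounds, and dividing by the interval length $2^{-k-1}$ and by $s_N^2$ accounts for the $(2\times 10^3+2^{k+6})/s_N$ contribution.

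\textbf{Quenched part.} The main obstacle is that $F_N^{\rm pert}$ is not a priori convex in $\lambda_k$, preventing a direct application of Lemma \ref{lemmaConvexity}. To remedy this, introduce the random convexifying correction
\[
\phi_k(\lambda_k) := \frac{F_N^{\rm pert}(\lambda_k)}{s_N} + \frac{C_k\lambda_k^2}{2}, \qquad C_k := \frac{4\pi_k + 16\sum_{j\le\pi_k}\xi_{jk}}{s_N}.
\]
By the pointwise bound on $|\mathcal{H}_k''|$ we have $\phi_k''(\lambda_k)\ge 0$, so both $\phi_k$ and $\mathbb{E}\phi_k$ are convex in $\lambda_k$. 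Apply Lemma \ref{lemmaConvexity} to $(\phi_k,\mathbb{E}\phi_k)$ with an increment $\delta\in(0,2^{-k-2})$, mirroring the Gaussian argument: since $\phi_k'(\lambda_k) = \langle\mathcal{L}_k\rangle + C_k\lambda_k$, this controls $|\langle\mathcal{L}_k\rangle - \mathbb{E}\langle\mathcal{L}_k\rangle|$ by $\delta^{-1}$ times values of $|\phi_k - \mathbb{E}\phi_k|$ at three shifted arguments, differences of $\mathbb{E}\phi_k'$, and a residual $\lambda_k|C_k-\mathbb{E}C_k|$. Squaring and taking expectations, we use $\mathbb{E}(\phi_k-\mathbb{E}\phi_k)^2\le 2\mathbb{E}(F_N^{\rm pert}-\mathbb{E}F_N^{\rm pert})^2/s_N^2 + \lambda_k^4\Var(C_k)/2\le C(v_NN+s_N)/s_N^2$, where $\Var(C_k)=O(1/s_N)$ follows from the Poisson variance $\Var(\pi_k)=s_N$ and exponential variance $\Var(\xi_{jk})=1$, while \eqref{vNfreeEnergy} gives $\mathbb{E}(F_N^{\rm pert}-\mathbb{E}F_N^{\rm pert})^2\le v_NN$. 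Integrating over $\lambda_k\in[2^{-k-1},2^{-k}]$ and optimising $\delta$ via $\delta^3\sim (v_NN+s_N)/s_N^2$ yields the $(s_N^{-2}(v_NN+s_N))^{1/3}$ term.

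\textbf{Bound for $\tilde{\mathcal{L}}_k$.} Decompose $\tilde{\mathcal{L}}_k = \mathcal{L}_k^{(1)} - \mathcal{L}_k$, where $\mathcal{L}_k^{(1)} := s_N^{-1}\sum_{j\le\pi_k}\sigma_{i_{jk}}/(1+\lambda_k\sigma_{i_{jk}})$, and apply the triangle inequality with the $\mathcal{L}_k$-bound obtained above. For binary spins the identity $\sigma/(1+\lambda_k\sigma) = (\sigma-\lambda_k)/(1-\lambda_k^2)$ (valid because $\sigma^2 = 1$) reduces $\mathcal{L}_k^{(1)}$ to an affine combination of $s_N^{-1}\sum_{j\le\pi_k}\sigma_{i_{jk}}$ and $\pi_k/s_N$; both concentrate at rate $O(1/s_N)$, the former by combining the Poisson concentration of $\pi_k/s_N$, the random-index averaging of $\sigma_{i_{jk}}$ around $R_1\pi_k/s_N$, and the magnetisation concentration of $R_1$ established in Section \ref{sec:magn}; the latter directly from $\Var(\pi_k)=s_N$. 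This gives $\mathbb{E}_{\lambda_k}\mathbb{E}\langle(\mathcal{L}_k^{(1)}-\mathbb{E}\langle\mathcal{L}_k^{(1)}\rangle)^2\rangle\le O(1/s_N)$, producing the additional $40/s_N$ in \eqref{conc_Lprime}.
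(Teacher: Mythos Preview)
Your proof follows the same architecture as the paper's: thermal/quenched decomposition, a convexifying correction to $F_N^{\rm pert}$ so that Lemma~\ref{lemmaConvexity} applies, and optimisation over $\delta$. Your quadratic convexifier $C_k\lambda_k^2/2$ differs from the paper's choice $\sum_{j\le\pi_k}(8\lambda_k^2\xi_{jk}-\ln(1-\lambda_k))$, but both dominate the pointwise bound on $|\mathcal{H}_k''|$, so both work; the paper's choice keeps the computations at the unnormalised scale, yours at the normalised one. For the thermal and quenched parts your argument is essentially the paper's.

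The treatment of $\tilde{\mathcal L}_k$ is where the two genuinely diverge. The paper applies the Nishimori identity \emph{directly} to $g(\sigma,\pi_k):=\sum_{j\le\pi_k}\sigma_{i_{jk}}/(1+\lambda_k\sigma_{i_{jk}})$, obtaining $\mathbb E\langle(g(\sigma)-\mathbb E\langle g(\sigma)\rangle)^2\rangle=\Var(g(\sigma^*,\pi_k))$, and then bounds this variance by the law of total variance and a Cauchy--Schwarz estimate on the covariance with $-s_N\tilde{\mathcal L}_k$; this yields $\Var(\tilde{\mathcal L}_k)\le\Var(\mathcal L_k)+40/s_N$ with no factor-of-two loss and, crucially, never uses that $\sigma^2=1$, so it transfers verbatim to soft spins (consistent with the paper's remark that binarity is not used before Section~\ref{SecAHb1}). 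Your route via the binary identity $\sigma/(1+\lambda_k\sigma)=(\sigma-\lambda_k)/(1-\lambda_k^2)$ is correct for Ising spins but binary-specific. More importantly, your ``random-index averaging of $\sigma_{i_{jk}}$ around $R_1\pi_k/s_N$'' step is underspecified: the indices $(i_{jk})$ enter the Gibbs measure through $\mathcal H^{\rm exp}_N$, so one cannot average over them as if independent of $\langle\,\cdot\,\rangle$. The clean fix is precisely Nishimori applied to $A(\sigma)=s_N^{-1}\sum_{j\le\pi_k}\sigma_{i_{jk}}$ (giving $\Var A(\sigma^*)=O(1/s_N)$), which is effectively what the paper does on $g$ in one stroke. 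Finally, your triangle inequality gives $\Var(\tilde{\mathcal L}_k)\le 2\Var(\mathcal L_k^{(1)})+2\Var(\mathcal L_k)$, hence only $2\times$ the $\mathcal L_k$-bound plus $O(1/s_N)$, not the exact ``$+40/s_N$'' you claim; this is harmless for the application but does not match the stated constant.
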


\begin{proof}
The proof mirrors the strategy used for proving Theorem~\ref{OverlapConc}, found in section~\ref{sec:Overlap2Conc}. We start with the ``thermal concentration'', namely concentration with respect to the (perturbed) posterior distribution. We will prove
  \begin{align}
    \mathbb{E}_{\lambda_k}\mathbb{E}\big\langle (
  \mathcal{H}'_k-\langle \mathcal{H}'_k\rangle)^{2}\big\rangle\le 2^{k+6}s_N\,.\label{thermal}
  \end{align}
The proof starts from the identities (exchanging expectation and derivative can be done):
\begin{align}
\frac{dF_N^{\rm pert}}{d\lambda_k}=\langle\mathcal{H}'_k\rangle\,, \qquad \frac{d\,\e F_N^{\rm pert}}{d\lambda_k}=\mathbb{E}\langle\mathcal{H}'_k\rangle\,,\label{first_der_freeEn}
\end{align}
and for the second derivative
\begin{align}
\frac{d^2F_N^{\rm pert}}{d\lambda_k^2}=\big\langle (\mathcal{H}_k'  - \langle \mathcal{H}_k' \rangle)^2\big\rangle+\langle{\mathcal{H}_k''} \rangle\,, \qquad \frac{d^2\e F_N^{\rm pert}}{d\lambda_k^2}=\mathbb{E}\big\langle (\mathcal{H}_k'  - \langle \mathcal{H}_k' \rangle)^2\big\rangle+\e\langle\mathcal{H}_k'' \rangle\,,\label{second_der}
\end{align}
where
\begin{align}
 \mathcal{H}_k'':=\frac{d^2\mathcal{H}_k}{d\lambda_k^2} =\sum_{j\le \pi_k} \Big(-\frac{1}{(1+\lambda_k\sigma_{i_{jk}})^2} +2\frac{\sigma_{i_{jk}}\sigma^*_{i_{jk}}\xi_{jk}}{(1+\lambda_k\sigma_{i_{jk}}^*)^3}\Big) \quad \text{with}\quad |\mathbb{E}\langle \mathcal{H}_k''\rangle |\le 20 s_N\,.\label{sec_der_pert}
\end{align}
We used $\lambda_k\le 1/2$, that the spins are such that $|\sigma_{i_{jk}}|\le 1$ and $\mathbb{E}\xi_{jk}=1$. Therefore from \eqref{second_der} we get 
\begin{align}
\mathbb{E}_{\lambda_k}\mathbb{E}\big\langle (\mathcal{H}_k'  - \langle \mathcal{H}_k' \rangle)^2\big\rangle\le 2^{k+1} \frac{d\,\e F^{\rm pert}_N }{d\lambda_k}\Big|_{\lambda_k=2^{-k-1}}^{2^{-k}} + 20s_N\,.\label{to_combine}
\end{align}
Note that 
\begin{align}
\Big|\frac{d\, \e F^{\rm pert}_N}{d\lambda_k}\Big|=|\mathbb{E}\langle \mathcal{H}_k'\rangle|\le 6s_N\label{der_freeEn_bounded}
\end{align}
which implies the identity \eqref{thermal} when combined with \eqref{to_combine}.

We also need to control the fluctuations with respect to the quenched randomness. We prove:
  \begin{align}
    \mathbb{E}_{\lambda_k}\mathbb{E}\big\langle (\langle \mathcal{H}_k'\rangle-\mathbb{E}\langle \mathcal{H}_k'\rangle)^{2}\big\rangle\le 2\times 10^3 \times s_N + 2\times 10^4\times (s_N^{4}(v_NN+s_N))^{1/3}\,.\label{quenched}
  \end{align}
  As before, let us define proper functions which are convex in $\lambda_k$ for a given $k$ (which are this time extensive):
  \begin{align}
  \tilde F(\lambda_k):=F^{\rm pert}_N +\sum_{j\le \pi_k} \big(8\lambda_k^2 \xi_{jk} -\ln(1-\lambda_k )\big) \quad \text{and} \quad  \mathbb{E}\tilde F(\lambda_k) = \e F_N^{\rm pert}+ s_N\big(8\lambda_k^2-\ln(1-\lambda_k)\big)\,.\label{def:tildeFs}
  \end{align}
  In particular
  \begin{align}
  \tilde F-\e\tilde F:=F_N^{\rm pert}-\e F_N^{\rm pert} +A_N\,, \quad A_N :=\sum_{j\le \pi_k} \big(8\lambda_k^2 \xi_{jk} -\ln(1-\lambda_k )\big)-s_N\big(8\lambda_k^2  -\ln(1-\lambda_k )\big)\,.\label{0.10}
  \end{align}
  From \eqref{second_der} and \eqref{sec_der_pert} one can easily see that these functions are convex in $\lambda_k$. Applying lemma~\ref{lemmaConvexity} to $\lambda_k\mapsto \tilde F$ and $\lambda_k\mapsto\e \tilde F$, and using identities \eqref{first_der_freeEn} and \eqref{0.10}, yields (we slightly abuse notation and use $\lambda_k$ for both the variable and a specific value)
\begin{align}
| \mathbb{E}\langle\mathcal{H}'_k\rangle-\langle\mathcal{H}'_k\rangle|\le \Big|\frac{dA_N}{d\lambda_k}\Big|+\delta^{-1}\sum_{u\in \,\mathcal{U}}\big(|\tilde F(u)-\e \tilde F(u)|+|A(\lambda_k=u)|\big) +C_\delta^+(\lambda_k)+C_\delta^-(\lambda_k)
\end{align}
where $\mathcal{U}:=\{\lambda_k-\delta,\lambda_k,\lambda_k+\delta\}$ and 
\begin{align}
\qquad C_\delta^-(\lambda_k):=\e \tilde F'(\lambda_k)-\e \tilde F'(\lambda_k-\delta)\ge 0\,, \qquad C_\delta^+(\lambda_k):=\e \tilde F'(\lambda_k+\delta)-\e \tilde F'(\lambda_k)\ge 0\,. \label{Cs}  
\end{align}
Here the prime symbol $'$ means $\lambda_k$-derivative. Then $(\sum_{i\le p}v_i)^2\le p\sum_{i\le p}v_i^2$ implies, when also taking the quenched expectation, that the above inequality becomes
\begin{align}
\frac19\mathbb{E}(\langle\mathcal{H}'_k\rangle-\mathbb{E}\langle\mathcal{H}'_k\rangle)^2\le \mathbb{E}\Big(\frac{dA_N}{d\lambda_k}\Big)^2 +3\delta^{-2}(Nv_N +\sup_{\lambda_k}\mathbb{E}A_N^2)+\mathbb{E}\big[C_\delta^+(\lambda_k)^2+C_\delta^-(\lambda_k)^2\big]\,.\label{ineq_squared}
\end{align}
Denote $a:=\sum_{j\le \pi_k} \big(8\lambda_k^2 \xi_{jk} -\ln(1-\lambda_k )\big)$. Using the law of total variance we start by controlling
\begin{align*}
  \mathbb{E}A_N^2={\rm Var}(a)=\mathbb{E}_{\pi_k}{\rm Var}_{(\xi_{jk})}(a)+{\rm Var}(\mathbb{E}_{(\xi_{jk})} a)=64\lambda_k^4s_N + \big(8\lambda_k^2  -\ln(1-\lambda_k )\big)^2s_N\le 13s_N
\end{align*}
using that the noise variables $\xi_{jk}$ are i.i.d. of variance $1$ and $\lambda_k\le 1/2$. For the next term we proceed similarly. Define $a':=\sum_{j\le \pi_k} \big(16\lambda_k \xi_{jk} +\frac{1}{1-\lambda_k} \big)$ which is the $\lambda_k$-derivative of $a$. Then
\begin{align*}
  \mathbb{E}\Big(\frac{dA_N}{d\lambda_k}\Big)^2={\rm Var}(a')=\mathbb{E}_{\pi_k}{\rm Var}_{(\xi_{jk})}(a')+{\rm Var}(\mathbb{E}_{(\xi_{jk})} a')=256\lambda_k^2s_N + \Big(16\lambda_k  +\frac{1}{1-\lambda_k} \Big)^2s_N\le 164s_N\,.
\end{align*}
Now consider the last term in \eqref{ineq_squared}. First note, using \eqref{der_freeEn_bounded} and definitions \eqref{def:tildeFs} and \eqref{Cs},
\begin{align}
| C^{\pm}_\delta(\lambda_k)|\le 2|\e \tilde F'(\lambda_k)|\le 2(6s_N+10s_N)=32s_N\,.
\end{align}
We will soon consider the $\lambda_k$-expectation of the inequality \eqref{ineq_squared}. For this particular term it gives (using Fubini)
\begin{align}
\mathbb{E}_{\lambda_k}\mathbb{E}\big[C_\delta^+(\lambda_k)^2+C_\delta^-(\lambda_k)^2\big]\le 32s_N\mathbb{E}\,\mathbb{E}_{\lambda_k}\big[C_\delta^+(\lambda_k)+C_\delta^-(\lambda_k)\big]\,.
\end{align}
By definition \eqref{Cs}, and as $\lambda_k\sim  \mathcal{U}[2^{-k-1},2^{-k}]$,
\begin{align*}
 \mathbb{E}_{\lambda_k}\big[C_\delta^+(\lambda_k)+C_\delta^-(\lambda_k)\big]&=\frac12\Big[\big(\e\tilde F(2^{-k}+\delta)-\e\tilde F(2^{-k}-\delta)\big)+\big(\e\tilde F f(2^{-k-1}-\delta)-\e\tilde F(2^{-k-1}+\delta)\big)\Big]\\
 &\le\frac12\times 4\times 2 \delta\times 16s_N=64\,\delta\, s_N
\end{align*}
using again $|\e\tilde F'|\le 16 s_N$. Therefore
\begin{align}
 \mathbb{E}_{\lambda_k}\mathbb{E}\big[C_\delta^+(\lambda_k)^2+C_\delta^-(\lambda_k)^2\big]\le 2^{11}\delta\,s_N^2 \,.
\end{align}
Gathering all our results in \eqref{ineq_squared} that we average over $\lambda_k$ yields
\begin{align}
\mathbb{E}_{\lambda_k}\mathbb{E}(\langle\mathcal{H}'_k\rangle-\mathbb{E}\langle\mathcal{H}'_k\rangle)^2\le 1496\,s_N +27\delta^{-2}(Nv_N+13s_N)+18432\,\delta\,s_N^2\,.\label{ineq_squared_2}
\end{align}
The bound is optimised choosing $\delta=\delta_N$ with $\delta_N^3=\Theta(s_N^{-2}(Nv_N  + s_N))$ which finally yields the inequality \eqref{quenched}. Combining the thermal \eqref{thermal} and quenched \eqref{quenched} bounds, and dividing by $s_N^2$, ends the proof of the first part of Proposition~\ref{thm:concen_der}.

In order to deduce the concentration result for $\tilde{\mathcal{L}}_k$ notice that by the Nishimori identity the first term entering ${\mathcal{L}}_k$ concentrates automatically: letting $g(\sigma,\pi_k):= \sum_{j\le \pi_k} \sigma_{i_{jk}}/(1+\lambda_k\sigma_{i_{jk}})$ we have
\begin{align}
\e\big\langle(g(\sigma)-\e\langle g(\sigma)\rangle)^2\big\rangle = \e\big(g(\sigma^*)-\e g(\sigma^*)\big)^2={\rm Var}(g(\sigma^*,\pi_k))\,.
\end{align}
By the law of total variance this gives
\begin{align}
 {\rm Var}(g(\sigma^*,\pi_k))=\e_{\pi_k}{\rm Var}_{\sigma^*}(g(\sigma^*,\pi_k))+{\rm Var}(\e_{\sigma^*} g(\sigma^*,\pi_k)) \le 4s_N+4s_N
\end{align}
using that $|\sigma^*_{i_{jk}}/(1+\lambda_k\sigma_{i_{jk}}^*)|\le 2$ because $\lambda_k\le 1/2$, and $\e \pi_k={\rm Var}(\pi_k)=s_N$. Therefore
\begin{align*}
\mathbb{E}_{\lambda_k} {\rm Var}(\tilde{\mathcal{L}}_k)&\le \mathbb{E}_{\lambda_k} {\rm Var}({\mathcal{L}}_k)+\frac{8}{s_N}+\frac2{s_N^2}\Big|{\rm Cov}\Big(g(\sigma,\pi_k),-\sum_{j\le \pi_k} \frac{\sigma_{i_{jk}}\xi_{jk}}{(1+\lambda_k\sigma_{i_{jk}}^*)^2} \Big)\Big|\\
&\le \mathbb{E}_{\lambda_k} {\rm Var}({\mathcal{L}}_k)+\frac{8}{s_N}+\frac2{s_N^2}\Big\{8s_N {\rm Var}\Big(\sum_{j\le \pi_k} \frac{\sigma_{i_{jk}}\xi_{jk}}{(1+\lambda_k\sigma_{i_{jk}}^*)^2} \Big)\Big\}^{1/2}
\end{align*}
using $|{\rm Cov}(a,b)|\le [{\rm Var}(a){\rm Var}(b)]^{1/2}$ and again ${\rm Var}(g(\sigma,\pi_k))={\rm Var}(g(\sigma^*,\pi_k))\le 8s_N$. By similar computations as before based on the law of total variance one gets that 
\begin{align*}
{\rm Var}\Big(\sum_{j\le \pi_k} \frac{\sigma_{i_{jk}}\xi_{jk}}{(1+\lambda_k\sigma_{i_{jk}}^*)^2} \Big)\le 16 \,{\rm Var}\Big(\sum_{j\le \pi_k} \xi_{jk} \Big)\le 32 s_N\,.
\end{align*}
Combining everything we reach
\begin{align*}
\mathbb{E}_{\lambda_k} {\rm Var}(\tilde{\mathcal{L}}_k)&\le \mathbb{E}_{\lambda_k} {\rm Var}({\mathcal{L}}_k)+\frac{8}{s_N}+\frac2{s_N^2}\sqrt{256 s_N^2}=\mathbb{E}_{\lambda_k} {\rm Var}({\mathcal{L}}_k)+\frac{40}{s_N}
\end{align*}
which is the result \eqref{conc_Lprime}.
\end{proof}

We are now in position to prove Theorem \ref{ThFdSiden} based on Proposition~\ref{thm:concen_der}.

\begin{proof}[Proof of Theorem \ref{ThFdSiden}]
Recall definitions \eqref{Hkprime} and \eqref{Lkprime}.  We will emphasise the dependence in the first replica $\sigma=\sigma^1$ by writting explicitly $\tilde{\mathcal{L}}_k(\sigma^1)$. By Proposition~\ref{thm:concen_der} and Cauchy-Schwarz, for any $k\ge 1$ we have
\begin{equation}
\e_{\lambda}\big|
\e \la f_n  \tilde{\mathcal{L}}_k(\sigma^1)\ra
- \e \la f_n \ra
\e \la  \tilde{\mathcal{L}}_k(\sigma)\ra  \big| \leq \Big\{\frac{2\times 10^3+2^{k+6}+40}{s_N}+4\times 10^4\times\Big(\frac{vN}{s_N^2}\Big)^{1/3}\Big\}^{1/2}\,.
\end{equation}
Recalling the definitions of the quantities entering the Franz-de Sanctis identities
\begin{align}
\theta_{ik}^\ell:= \ln(1+\lambda_k \sigma_i^\ell)-\lambda_k y_{ik} \sigma_i^\ell\,,\quad
y_{ik}:=\frac{\xi}{1+\lambda_k \sigma_i^*}\,,\quad\text{and}\quad
\dT_{ik}^\ell:= \frac{y_{ik}\sigma_i^\ell}{1+\lambda_k \sigma_i^*}\label{defs_FdS}
\end{align}
with $\xi\sim {\rm Exp}(1)$ independent of everything, it remains to show that
\begin{align}
\begin{split}\label{FdSpip}
\e \la f_n  \tilde{\mathcal{L}}_k(\sigma^1)\ra 
&=
\e\frac{\la{f_n  \dT_{ik}^1e^{\sum_{\ell\le n} \theta_{ik}^\ell}}\ra}{\la{e^{\theta_{ik}}}\ra^n} \quad \mbox{as well as} \quad \e \la \tilde{\mathcal{L}}_k(\sigma)\ra= \e \frac{\la{\dT_{ik}e^{\theta_{ik}}}\ra}{\la{e^{\theta_{ik}}}\ra}\,.
\end{split}
\end{align}
The Poisson number $\pi_k\sim{\rm Poiss}(s_N)$ appearing in $\tilde{\mathcal{L}}_k$ is independent of everything. Summing over events $\{r : \pi_k=r\geq 0 \}$ (for a given index $k\ge 1$),
\begin{align}
\e \la f_n  \tilde{\mathcal{L}}_k(\sigma^1)\ra 
&=\frac1{s_N}
\sum_{r\geq 0} \frac{s_N^r}{r!}e^{-s_N} \e \Big\langle f_n\sum_{j\le r} \frac{\sigma_{i_{jk}}^1\xi_{jk}}{(1+\lambda_k\sigma_{i_{jk}}^*)^2} \Big\rangle_{\pi_k=r} \nonumber
\\
&=
\frac1{s_N}\sum_{r\geq 1} \frac{s_N^r}{r!}e^{-s_N} \e \Big\langle f_n\sum_{j\le r}\frac{ \sigma_{i_{jk}}^1\xi_{jk}}{(1+\lambda_k\sigma_{i_{jk}}^*)^2} \Big\rangle_{\pi_k=r} \nonumber
\\
&=
\frac1{s_N}\sum_{r\geq 1} \frac{s_N^r}{r!}e^{-s_N} r\,\e \Big\langle f_n \frac{ \sigma_{i_{1k}}^1\xi_{1k}}{(1+\lambda_k\sigma_{i_{1k}}^*)^2} \Big\rangle_{\pi_k=r} \nonumber\\
&=
\sum_{r\geq 1} \frac{s_N^{r-1}}{(r-1)!}e^{-s_N} \e \Big\langle f_n\frac{ \sigma_{i_{1k}}^1\xi_{1k}}{(1+\lambda_k\sigma_{i_{1k}}^*)^2} \Big\rangle_{\pi_k=r}  \,.\label{last_poisson_manip}
\end{align}
The first equality is by definition, writting explicitely the average over the poisson number $\pi_k$, while the third equality is by symmetry under the expectation.

Recall the definition of the exponential perturbation \eqref{exp_pertu}, that can be re-written $\mathcal{H}^{\rm exp}_N(\sigma,\lambda)=\sum_{k\ge 1} \mathcal{H}_k(\sigma)$ with the obvious definition for $\mathcal{H}_k(\sigma)$. For fixed $\pi_k=r$ the latter can be decomposed as
\begin{align}
\mathcal{H}_k(\sigma^\ell)=\sum_{j\le r} \Big(\ln(1+\lambda_k\sigma^\ell_{i_{jk}})-\frac{\lambda_k \xi_{jk} \sigma^\ell_{i_{jk}}}{1+\lambda_k\sigma^*_{i_{jk}}}\Big)= \Theta_{{1k}}^\ell+ \sum_{2\le j\le r}\Theta_{{jk}}^\ell=: \Theta_{{1k}}^\ell+ \tilde{\mathcal{H}}^{r-1}_k(\sigma^\ell)\label{H_k_def}
\end{align}
where $$\Theta_{{jk}}^\ell:=\ln(1+\lambda_k\sigma^\ell_{i_{jk}})-\frac{\lambda_k \xi_{jk} \sigma^\ell_{i_{jk}}}{1+\lambda_k\sigma^*_{i_{jk}}}\,, \qquad \tilde{\mathcal{H}}_k^{r-1}(\sigma^\ell):=\sum_{2\le j\le r} \Big(\ln(1+\lambda_k\sigma^\ell_{i_{jk}})-\frac{\lambda_k \xi_{jk} \sigma^\ell_{i_{jk}}}{1+\lambda_k\sigma^*_{i_{jk}}}\Big)\,.$$
The terms $\mathcal{H}_k(\sigma^\ell)$ still appear for all replicas $\sigma^1,\ldots,\sigma^n$ in the Gibbs average $\la\,\cdot\,\ra_{\pi_k=r}$ in \eqref{last_poisson_manip}. Denote also $D^1_{{1k}}:= \sigma_{i_{1k}}^1\xi_{1k}/(1+\lambda_k\sigma_{i_{1k}}^*)^2$ and, similarly to \eqref{HNpert}, the ``partially perturbed'' Hamiltonian $$
\mathcal{H}_N^{\mathrm{g}}(\sigma):= \mathcal{H}_N(\sigma)+\mathcal{H}^{\rm gauss}_N(\sigma,\lambda_0)\,,$$ namely only perturbed by the gaussian channel. Explicitly, the term entering \eqref{last_poisson_manip} then reads after a basic manipulation (the sum $\sum_{\sigma^1\cdots \sigma^n}$ is over $\{-1,1\}^{N\times n}$)
%
%
\begin{align*}
\e\langle f_n D^1_{{1k}}\rangle_{\pi_k=r}&= \e  \Big(\prod_{2\le j\le r} \e_{i_{jk}} \e_{\xi_{jk}}\Big) \e_{i_{1k}}\e_{\xi_{1k}} \Big[\frac{\sum_{\sigma^1\cdots \sigma^n}  e^{\sum_{\ell\le n}(\mathcal{H}_N^{\mathrm{g}}(\sigma^\ell)+\tilde{\mathcal{H}}_k^{r-1}(\sigma^\ell))} f_n D^1_{1k} e^{\sum_{\ell \le n}\Theta_{{1k}}^\ell}}{\sum_{\sigma^1\cdots \sigma^n}  e^{\sum_{\ell\le n}(\mathcal{H}_N^{\mathrm{g}}(\sigma^\ell)+\tilde{\mathcal{H}}_k^{r-1}(\sigma^\ell))} } \\
  &\qquad\qquad\qquad\qquad\qquad\qquad\qquad\qquad \times\Big(\frac{\sum_{\sigma^1\cdots \sigma^n}  e^{\sum_{\ell\le n}( \mathcal{H}_N^{\mathrm{g}}(\sigma^\ell)+ \tilde{\mathcal{H}}_k^{r-1}(\sigma^\ell))} e^{\sum_{\ell \le n}\Theta_{{1k}}^\ell}}{\sum_{\sigma^1\cdots \sigma^n}  e^{\sum_{\ell\le n}(\mathcal{H}_N^{\mathrm{g}}(\sigma^\ell)+\tilde{\mathcal{H}}_k^{r-1}(\sigma^\ell))} }\Big)^{-1}\Big]\,.
\end{align*}
In the above identity we have explicitly written the quenched expectation with respect to the exponential noise and random spin indices indexed by $k$ and whose number is given by $\pi_k=r$. The rest of the quenched disorder is averaged all together by $\e$. We have separated the $(\Theta_{1k}^\ell)_{\ell\le n}$ explicitly. We now denote the Gibbs average with $\mathcal{H}_k(\sigma^\ell)$ replaced by $\tilde{\mathcal{H}}_k^{r-1}(\sigma^\ell)$ for all replicas as $\la\,\cdot\,\ra^{'}_{\pi_k=r}$. Then for $\pi_k=r\geq 1$ we obtain
\begin{align}
\e\langle f_n D^1_{{1k}}\rangle_{\pi_k=r}=\e\, \e_{i_{1k}} \e_{\xi_{1k}} \frac{\la{f_n  D_{1k}^1e^{\sum_{\ell \le n} \Theta^\ell_{1k}}\ra^{'}_{\pi_k=r} }}{(\la{e^{\Theta_{1k}}}\ra_{\pi_k=r}^{'})^n}\,.\label{D1_eq}
\end{align}
Note that the random variables $i_{1k}\sim\mathcal{U}\{1\,\ldots,N\}$ and $\xi_{1k}\sim {\rm Exp}(1)$ only enter the functions $D_{1k}^1$ and $(\Theta_{1k}^{\ell})_{\ell\le n}$ that have been isolated and therefore the Gibbs average $\la\,\cdot\,\ra^{'}_{\pi_k=r}$ is independent of all these. We emphasise this fact by renaming this index, noise variable and functions in the right-hand side of \eqref{D1_eq} as $i_{1k}\to i$, $\xi_{1k}\to \xi$, $\Theta_{1k}^{\ell} \to \theta_{ik}^\ell=\ln(1+\lambda_k\sigma_i^\ell)-\lambda_k  \sigma_i^\ell \xi/(1+\lambda_k\sigma_i^*)$ and $D_{1k}^1\to d_{ik}^1=\sigma_{i}^1\xi/(1+\lambda_k\sigma_{i}^*)^2$ where $\xi\sim{\rm Exp}(1)$ and $i\sim\mathcal{U}\{1,\ldots,N\}$ are independent of everything else; these match definitions \eqref{defs_FdS}. With these new variables the last equality becomes simply
$$\e\langle f_n D^1_{{1k}}\rangle_{\pi_k=r}=\e  \frac{\la{f_n  d_{ik}^1e^{\sum_{\ell \le n} \theta^\ell_{ik}}\ra^{'}_{\pi_k=r} }}{(\la{e^{\theta_{ik}}}\ra_{\pi_k=r}^{'})^n}$$
where in the right-hand side the symbol $\e$ includes the expectation with respect to the independent $\xi$ and $i$ in addition of the quenched disorder appearing in the bracket $\langle \, \cdot\,\ra_{\pi_k=r}^{'}$. Making the change of variables $m=r-1$, the sum in \eqref{last_poisson_manip} becomes
\begin{align*}
 \e \la f_n  \tilde{\mathcal{L}}_k(\sigma^1)\ra=\sum_{m\geq 0} \frac{s_N^m}{m!}e^{-s_N} \e  \frac{\la{f_n  d_{ik}^1e^{\sum_{\ell \le n} \theta^\ell_{ik}}\ra^{'}_{\pi_k=m+1} }}{(\la{e^{\theta_{ik}}}\ra_{\pi_k=m+1}^{'})^n} \,.\label{last_poisson_manip_2} 
\end{align*}
Because now in the Gibbs average $\pi_k=m+1$, the terms $\tilde{\mathcal{H}}^{m}_k(\sigma^1),\ldots,\tilde{\mathcal{H}}^{m}_k(\sigma^n)$ in $\la\,\cdot\,\ra^{'}_{\pi_k=m+1}$ become, respectively, copies of $\mathcal{H}_k(\sigma^1),\ldots,\mathcal{H}_k(\sigma^n)$ as seen from \eqref{H_k_def}. Therefore $\mathcal{H}_N^{\mathrm{g}}(\sigma^\ell)+\tilde{\mathcal{H}}_k^{m}(\sigma^\ell)$ defining the measure $\la\,\cdot\,\ra^{'}_{\pi_k=m+1}$ is equal in distribution, when $m\sim{\rm Poiss}(s_N)$, to the perturbed Hamiltonian $\mathcal{H}_N^{\mathrm{pert}}(\sigma^\ell,\lambda)$ given by \eqref{HNpert} that defines the original measure $\la\,\cdot\,\ra_{\pi_k}$ with $\pi_k\sim{\rm Poiss}(s_N)$. This proves the first equation in \eqref{FdSpip}. The second equation follows from the first replacing $f_n $ by $1$.
\end{proof}

\subsection{Passing to the limit}\label{sec:passing_limit} Suppose there exists a subsequence $(N_j)_{j\geq 1}$ along which \eqref{MOconcentration} fails for some $n\geq 3$, namely,
\begin{equation}
\e_{\lambda}\e\big\la (R_{1,\ldots,n}  - \e\la R_{1,\ldots,n} \ra)^2\big\ra \geq \delta>0\,.
\label{MOconcentrationPf}
\end{equation}
Since for a given function $f_n$ the set of its allowed arguments as well as $k\ge 1$ are countable, the equations \eqref{Oconcentration2}, \eqref{eqFdSpertfTh} and \eqref{MOconcentrationPf} imply that we can choose some $\lambda=\lambda^N=(\lambda_k^N)_{k\geq 0}$ varying with $N$, with $\lambda_k^N\in [2^{-k-1},2^{-k}]$, such that, along the same subsequence $(N_j)_{j\geq 1}$,
\begin{equation}
\e\big\la (R_{1,\ldots,n}  - \e\la R_{1,\ldots,n} \ra)^2\big\ra \geq \frac{\delta}{2}>0\,,
\label{MOconcentrationPf2}
\end{equation}
and
\begin{equation}
\e\big\la (R_{1,2}  - \e\la R_{1,2} \ra)^2\big\ra \to 0\,, \qquad
\left|
\e \frac{\la{f_n  \dT_{ik}^1e^{\sum_{\ell\le n} \theta_{ik}^\ell}}\ra}{\la{e^{\theta_{ik}}}\ra^n}
- \e \la{f_n }\ra
\e \frac{\la{\dT_{ik}e^{  \theta_{ik}}}\ra}{\la{e^{ \theta_{ik}}}\ra}
\right|\to 0
\label{eqFdSpertf32}
\end{equation} 
with definitions \eqref{defs_FdS_0}, and this jointly for all possible arguments of $f_n$ and $k\ge 1$, with these specific parameters $\lambda=\lambda^N$. The Gibbs measure $\la\,\cdot\,\ra$ is also for the Hamiltonian with these parameters $\lambda^N$.

Let us prove the existence of such $\lambda^N$. Let the multioverlap variance ${\rm Var}(R_{1,\ldots,n}):=\e\la (R_{1,\ldots,n}  - \e\la R_{1,\ldots,n} \ra)^2\ra\le 1$. We have ($\boldsymbol{1}(\cdot)$ is the indicator):
\begin{align*}
&\e_{\lambda}\boldsymbol{1}({\rm Var}(R_{1,\ldots,n})\ge \delta/2)+\delta/2 \nonumber\\
&\qquad\ge\e_{\lambda}\boldsymbol{1}({\rm Var}(R_{1,\ldots,n})\ge \delta/2)+(\delta/2)\e_{\lambda} \boldsymbol{1}({\rm Var}(R_{1,\ldots,n})< \delta/2) \nonumber\\
&\qquad\qquad\ge\e_{\lambda}[{\rm Var}(R_{1,\ldots,n})\boldsymbol{1}({\rm Var}(R_{1,\ldots,n})\ge \delta/2)]+\e_{\lambda}[{\rm Var}(R_{1,\ldots,n})\, \boldsymbol{1}({\rm Var}(R_{1,\ldots,n})< \delta/2)] \geq \delta
\end{align*}
by \eqref{MOconcentrationPf} for the last inequality, so that $$\mathbb{P}_\lambda({\rm Var}(R_{1,\ldots,n})\ge \delta/2)\ge \delta/2>0$$ uniformly in $(N_j)_{j\ge 1}$. The pairs $(f_n ,k)$ (where by $f_n$ we mean the function $f_n$ with a given set of arguments) can be injectively indexed by integers $j\ge 1$. Then denote ${\rm FdS}_j$ a bounded term of the form of what is appearing between the absolute values in \eqref{eqFdSpertfTh} for some specific $(f_n ,k)$. Then let $$|{\rm FdS}|:=\sum_{j\ge 1}2^{-j}\,|{\rm FdS}_j|\,.$$ From \eqref{Oconcentration2}, \eqref{eqFdSpertfTh} the Markov inequality implies
\begin{align*}
\mathbb{P}_\lambda({\rm Var}(R_{1,2})\le \eps)\ge 1-\frac{\e_\lambda{\rm Var}(R_{1,2})}{\eps}\,, \qquad \mathbb{P}_\lambda(|{\rm FdS}|\le \eps)\ge 1-\frac{\e_\lambda|{\rm FdS}|}{\eps}\,.
\end{align*}
As long as
\begin{align*}
  \mathbb{P}_\lambda({\rm Var}(R_{1,\ldots,n})&\ge \delta/2)+\mathbb{P}_\lambda({\rm Var}(R_{1,2})\le \eps)+\mathbb{P}_\lambda(|{\rm FdS}|\le \eps)\ge \frac{\delta}{2}+2-\frac{\e_\lambda{\rm Var}(R_{1,2})}{\eps}-\frac{\e_\lambda|{\rm FdS}|}{\eps}
\end{align*}
is also greater or equal than $2+\delta/4$, namely whenever $\eps\ge 4 (\e_\lambda{\rm Var}(R_{1,2})+\e_\lambda|{\rm FdS}|)/\delta=: C^N$ (with $C^N\to 0$ by \eqref{Oconcentration2}, \eqref{eqFdSpertfTh} when appropriately choosing $s_N$), then the following condition is satisfied
\begin{align}
 \mathbb{P}_\lambda\big(\{{\rm Var}(R_{1,\ldots,n})\ge \delta/2 \}  \cap\{{\rm Var}(R_{1,2}) \le \eps\}\cap \{|{\rm FdS}|\le \eps\}\big)\ge \frac{\delta}{4}\,.
\end{align}
Therefore, choosing an appropriate sequence $\eps=\eps^N\to 0$ along $(N_j)_{j\ge 1}$, with $\eps^N\ge C^N$, proves the existence of $\lambda^N=(\lambda_k^N)_{k\geq 0}$. 

By the Nishimori identity applied to the functions
$$ (\sigma^*,\sigma^1,\sigma^2,\ldots,\sigma^n,W)\mapsto \frac{{f_n  \dT_{ik}^1e^{\sum_{\ell\le n} \theta_{ik}^\ell}}}{\la{e^{\theta_{ik}}}\ra^n}\,, \quad (\sigma^*,\sigma^1,\sigma^2,\ldots,\sigma^n)\mapsto {f_n } \quad \text{and} \quad  (\sigma^*,\sigma^1,W)\mapsto \frac{{\dT_{ik}e^{\theta_{ik}}}}{\la{e^{\theta_{ik}}}\ra}\,,$$
we can replace the quenched signal $\sigma^*$ in all the integrands in the second equation in \eqref{eqFdSpertf32} by another replica (note that the denominators $\la{e^{\theta_{ik}}}\ra$ are just functions of the data $W$ and therefore remain unchanged by the application of the Nishimori identity). For convenience of notation, we will denote this new replica by $\sigma^\diamond$ to distinguish from the disorder $\sigma^*$ and at the same time not to occupy any specific index. Then, the second equation in \eqref{eqFdSpertf32} can be written as
\begin{equation}
\left |
\e \e_\diamond
\frac{\rrac[a]{f_n \dT_{ik}^1 e^{\sum_{\ell\le n} \theta_{ik}^\ell}}}{\rrac[a]{e^{\theta_{ik}}}^n}
- \e \e_\diamond \rrac[a]{f_n}
\e \e_\diamond\frac{\rrac[a]{\dT_{ik} e^{\theta_{ik}}}}{\rrac[a]{e^{\theta_{ik} }}}
\right|\to 0\,,
\label{eqFdSpertf3cp}
\end{equation}
where $\e_\diamond$ denotes the Gibbs average $\la\,\cdot\,\ra$ with respect to the replica $\sigma^\diamond$ only, $\la\,\cdot\,\ra$ denotes the Gibbs average with respect to all other ``standard'' replicas, $f_n$ is a function of finitely many spins on $n$ replicas and on $\sigma^\diamond$, $\xi\sim \mathrm{Exp}(1)$, and
$$
\theta_{ik}^\ell=\ln(1+\lambda_k \sigma_i^\ell)-\lambda_k y_{ik} \sigma_i^\ell\,,\quad
y_{ik}=\frac{\xi}{1+\lambda_k \sigma_i^\diamond},\quad
\dT_{ik}^\ell= \frac{y_{ik}\sigma_i^\ell}{1+\lambda_k \sigma_i^\diamond}\,,
$$
with $\lambda_k=\lambda_k^N$ for all $k\geq 0$.

Then we extract a further subsequence $(N_{j_a})_{a\ge 1}$ of $(N_j)_{j\ge 1}$ along which $\lambda^N_k\to\lambda_k\in[2^{-k-1},2^{-k}]$ for all $k\ge 0$ (by Cantor's diagonalisation). Below, we will work with the set $\Lambda:=\{\lambda_k: k\geq 1\}.$

Finally, we can choose a further subsequence along which the distribution of the array $(\sigma_i^\ell)$ under the quenched Gibbs measure $\e[G_N^{{\rm pert}}(\,\cdot\,,\lambda)^{\otimes \infty}]$ converges in the sense of finite dimensional distributions (by Prohorov's Theorem, since the space $\{-1,1\}^{\mathbb{N}^2}$ equipped with its product topology and the discrete metric is compact).

\subsection{Aldous-Hoover representation in the limit.} \label{SecAHb1}
In this latter subsequential thermodynamic limit along which the quenched Gibbs measure of $(\sigma_i^\ell)$ converges, the distribution of spins will inherit the symmetry between sites and replicas from the model,
\begin{equation}
(\sigma_i^\ell)_{i,\ell\geq 1}\stackrel{\rm d}{=}\big(\sigma_{\rho_1(i)}^{\rho_2(\ell)}\big)_{i,\ell\geq 1}
\end{equation}
for any permutations $\rho_1$ and $\rho_2$ of finitely many indices. By the Aldous-Hoover representation \cite{Aldous, Hoover} (see also Section 1.4 in \cite{SKmodel}), such symmetry implies that
\begin{equation}
(\sigma_i^\ell)_{i,\ell\ge 1} \stackrel{\rm d}{=}\big(\sigma(w,u_\ell,v_i,x_{i,\ell})\big)_{i,\ell\ge 1}
\label{AHrepr}
\end{equation}
for some function $\sigma\colon [0,1]^4\to \{-1,1\}$ (that may a priori depend on the form of $f_n$ and the subsequential limit selected in the previous section in case there are mutliple subsequential limits for the quenched Gibbs measure), and where $w$, $(u_\ell)$, $(v_i)$ and $(x_{i,\ell})$ are i.i.d. uniform $\mathcal{U}[0,1]$ random variables. This means that, along the above subsequence, for any finite subset $\mathcal{C}\subseteq \Natural^2$,
$$
\e \big\la \prod_{(i,\ell)\in \mathcal{C}} \sigma_i^\ell \big\ra\to 
\e  \prod_{(i,\ell)\in \mathcal{C}} \sigma(w,u_\ell,v_i,x_{i,\ell})\,,
$$
where in the limit, $\la\,\cdot\,\ra$ becomes the expectation in the random variables that depend on the replica indices, namely, the expectation in $(u_\ell)$ and $(x_{i,\ell})$ (see Appendix). Moreover, given representation \eqref{AHrepr}, if we denote
\begin{equation}\label{def_bs}
\bs(w,u,v):=\int_0^1\! \sigma(w,u,v,x)\, dx
\end{equation}
then the asymptotic analogue of the multioverlaps is given by
\begin{equation}
R_{\ell_1,\ldots, \ell_n} \to R^\infty_{\ell_1,\ldots, \ell_n}(w,(u_{\ell_j})_{j\le n}) :=\int_0^1 \prod_{j\leq n} \bs(w,u_{\ell_j},v)\,dv\,, \label{asymptMultiover}
\end{equation}
in the weak convergence sense, namely, the joint moments of all multioverlaps before the limit converge to joint moments of these analogues in the limit (see \cite{Pspins} or the Appendix). With this notation, in the limit along the above subsequence, the equations \eqref{MOconcentrationPf2} and \eqref{eqFdSpertf32} imply that
\begin{equation}
\e\big\la (R_{1,\ldots,n}^\infty  - \e\la R_{1,\ldots,n}^\infty  \ra)^2\big\ra \geq \frac{\delta}{2}>0\,,
\label{MOconcentrationPf3}
\end{equation}
but, on the other hand,
\begin{equation}
\e\big\la (R_{1,2}^\infty)^2 \big\ra = \big(\e\la R_{1,2}^\infty  \ra\big)^2\quad \text{and}\quad
\e \e_\diamond  \frac{\rrac[a]{f_n \dT_1^1 e^{\sum_{\ell\le n} \theta_{1}^\ell}}}{\rrac[a]{e^{\theta_1}}^n}
= \e \e_\diamond \rrac[a]{f_n}
\e \e_\diamond\frac{\rrac[a]{\dT_1 e^{\theta_1}}}{\rrac[a]{e^{\theta_1}}}
\label{eqFdSpertf4}
\end{equation}
for any function $f_n$ of finitely many spins $\sigma_i^\ell, \sigma_i^\diamond$ of finitely many replicas $\ell$ and the special replica $\diamond$ with $2\leq i\leq m$ for some $m$, $\xi\sim \mathrm{Exp}(1)$, and
$$
\theta_1^\ell=\ln(1+\lambda \sigma_1^\ell)-\lambda y_1 \sigma_1^\ell ,\quad
y_1=\frac{\xi}{1+\lambda \sigma_1^\diamond},\quad
\dT_1^\ell= \frac{y_1\sigma_1^\ell}{1+\lambda \sigma_1^\diamond},
$$
with $\lambda\in \Lambda:=\{\lambda_k: k\geq 1\}$. The reason we exclude spin index $i=1$ in the coordinates of $f_n $ is to reserve it specifically for $(\sigma_1^\ell)_{\ell \le n}$, because before the limit the spins $(\sigma_i^\ell)_{\ell\le n}$ appearing explicitly in Theorem~\ref{ThFdSiden} depended on a uniform random index $i\in\{1,\ldots,N\}$, which by symmetry can be fixed to $1$ as long as we avoid the spin indices on which $f_n $ depends (because a random uniformly chosen index $i\leq N$ belongs to $\{2,\ldots, m\}$ with vanishing probability in the limit).

\subsection{Thermal pure state} \label{SecAHb2} The identity $\e\la (R_{1,2}^\infty)^2 \ra = (\e\la R_{1,2}^\infty \ra)^2$ means that
$$
R_{1,2}^\infty = R_{1,2}^\infty(w,u_1,u_2) = \int_0^1 \bs(w,u_1,v)\bs(w,u_2,v)\,dv
$$
is constant and is thus almost surely independent of $w,u_1,u_2$. This means that, in fact, the function $\bs(w,u,v) = \bs(w,v)$ almost surely (i.e., it does not depend on $u$) and  $\int_0^1\! \bs(w,v)^2\,dv = \mathrm{const}$: the system is said to lie in a ``thermal pure state''. This appears, for example, in Theorem~5 in \cite{RSKsat} or Lemma~1 in \cite{1RSB}, and can be explained in a few words. Indeed, if we consider a (random) measure $du\circ(u\mapsto \bs(w,u,\,\cdot\,))^{-1}$ on $(L^2[0,1],dv)$, the concentration of the overlap means that the scalar product between two points (functions in $L^2$) sampled from this measure is constant, which means that the measure concentrates on one (random) function $\bs(w,\,\cdot\,)$ on the sphere of some fixed constant radius in $L^2$. In particular, instead of \eqref{AHrepr} we now have
\begin{equation}
(\sigma_i^\ell)_{i,\ell\ge 1} \stackrel{\rm d}{=}\big(\sigma(w,v_i,x_{i,\ell})\big)_{i,\ell\ge 1}
\label{Ocon}
\end{equation}
for some (any) function $\sigma$ of three variables such that $\int_0^1\! \sigma(w,v,x)\, dx = \bs(w,v)$; see also the proof of Corollary \ref{cor:asymp_spin_dist} in Section~\ref{sec:proof_coro} for a similar argument. At the level of the asymptotic spin array $(\sigma_i^\ell)_{i,\ell\ge 1}$ this means that the expectation $\la \, \cdot\, \ra$ is now asymptotically equivalent to a simple integral over $(x_{i,\ell})$ only, and that the replica indices can be freely exchanged. Writting a (finite) joint moment of spins gives, asymptotically,
\begin{align*}
 \e\big\langle \prod_{(i,\ell)\in \mathcal{C}} \sigma_i^\ell\big\rangle = \e\prod_{(i,\ell)\in \mathcal{C}} \bs(w,u_\ell,v_i)=\e\prod_{(i,\ell)\in \mathcal{C}} \bs(w,v_i)
\end{align*}
(here we asssumed for simplicity that $\mathcal{C}$ does not contain repeated elements) which, e.g., concretely implies that $\e \langle\sigma_1^1\sigma_1^2\sigma_2^1\sigma_2^2\rangle=\e\langle \sigma_1\sigma_2\rangle^2$ is asymptotically also equal to $\e \langle\sigma_1^1\sigma_1^2\sigma_2^3\sigma_2^4\rangle=\e(\langle \sigma_1\rangle \langle \sigma_2\rangle)^2$. Also, the asymptotic multioverlaps therefore simplify to
\begin{equation}
R^\infty_{\ell_1,\ldots, \ell_n}(w)= R^\infty_{1,\ldots, n}(w) =\int_0^1 \bs(w,v)^n\,dv\,. \label{asymptMultiover_simple}
\end{equation}

\subsection{Concentration of multioverlaps, $n\geq 3$.}  \label{SecAHb3}
To see how concentration of the overlap in the form \eqref{Ocon} implies concentration of all multioverlaps, let us first derive the following consequence of the identities \eqref{eqFdSpertf4}.

\begin{lemma}[A decoupling lemma]
If $\xi_1,\xi_2\sim \operatorname{Exp}(1)$ are independent and, for $j=1,2,$  
$$
\theta_j:=\ln(1+\lambda \sigma_j)-\lambda y_j \sigma_j\,,\quad
y_j:=\frac{\xi_j}{1+\lambda \sigma_j^\diamond}\,,\quad
\dT_j:= \frac{y_j\sigma_j}{1+\lambda \sigma_j^\diamond}\,,
$$
where $\lambda\in \Lambda=\{\lambda_k : k\geq 1\}$ then
\begin{equation}
\e\e_\diamond \frac{\rrac[a]{\dT_1 e^{\theta_1} \dT_2 e^{\theta_2}}}{\rrac[a]{e^{\theta_1} e^{\theta_2}}}
= 
\e\e_\diamond \frac{\rrac[a]{\dT_1 e^{ \theta_1}}}{\rrac[a]{e^{ \theta_1}}}
\e\e_\diamond \frac{\rrac[a]{\dT_2 e^{\theta_2}}}{\rrac[a]{e^{ \theta_2}}}\,.
\label{FdSlim1r3}
\end{equation}
\end{lemma}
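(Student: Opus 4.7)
Plan: The identity \eqref{FdSlim1r3} will be obtained by applying the asymptotic Franz--de Sanctis identity \eqref{eqFdSpertf4} to a suitably \emph{tilted} Gibbs measure, followed by a cancellation coming from the thermal pure state structure of Section~\ref{SecAHb2}. The key observation is that adding one extra exponential side-observation to the Bayesian model corresponds to tilting the Gibbs measure by $e^{\theta_1}$; this tilted model is itself Bayes-optimal, so the Franz--de Sanctis machinery applies again.

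Draw $\xi_1\sim\mathrm{Exp}(1)$ independently of everything else, and introduce the tilted Gibbs average $\la \cdot\ra_1 := \la \cdot\, e^{\theta_1}\ra/\la e^{\theta_1}\ra$. This is the asymptotic posterior of the Bayesian model enriched by one further exponential side-observation $Y^{\rm new} = \xi_1/(1+\lambda \sigma_1^*)$ at spin~$1$. Because this enrichment adds only finitely many bounded-in-expectation observations, it preserves all hypotheses required by Theorem~\ref{ThFdSiden} (factorised prior; spin exchangeability at indices $\neq 1$; free-energy concentration), so the arguments of that theorem and of Section~\ref{sec:passing_limit} apply verbatim and yield the asymptotic Franz--de Sanctis identity for $\la\cdot\ra_1$, with the outer expectation $\e$ now also averaging over $\xi_1$. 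By spin exchangeability in the enriched model at indices $\neq 1$, apply this tilted FdS with $n=1$ at spin index~$2$ and with the choice $f = \dT_1$, which is a bounded function of $\sigma_1^1, \sigma_1^\diamond$ for each fixed $\xi_1$. Expanding $\la\cdot\ra_1 = \la\cdot\, e^{\theta_1}\ra/\la e^{\theta_1}\ra$ on both sides (and using $\la \dT_1\dT_2\, e^{\theta_1+\theta_2}\ra = \la \dT_1 e^{\theta_1}\dT_2 e^{\theta_2}\ra$ for single-replica brackets) yields
$$
\e\e_\diamond \frac{\la \dT_1 e^{\theta_1}\dT_2 e^{\theta_2}\ra}{\la e^{\theta_1}e^{\theta_2}\ra} = \e\e_\diamond \frac{\la \dT_1 e^{\theta_1}\ra}{\la e^{\theta_1}\ra}\cdot \e\e_\diamond \frac{\la \dT_2 e^{\theta_1+\theta_2}\ra}{\la e^{\theta_1+\theta_2}\ra}.
$$

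It remains to reduce the second factor on the right to $\e\e_\diamond \la\dT_2 e^{\theta_2}\ra/\la e^{\theta_2}\ra$, which is immediate from the thermal pure state structure of Section~\ref{SecAHb2}: as the asymptotic Gibbs measure factorises over sites, pointwise in the quenched disorder one has $\la\dT_2\, e^{\theta_1+\theta_2}\ra = \la e^{\theta_1}\ra\la\dT_2 e^{\theta_2}\ra$ and $\la e^{\theta_1+\theta_2}\ra = \la e^{\theta_1}\ra\la e^{\theta_2}\ra$, so the $\la e^{\theta_1}\ra$ factors cancel in the ratio. Substituting into the previous display gives \eqref{FdSlim1r3}. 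The conceptual crux is the first step: verifying that all hypotheses of Theorem~\ref{ThFdSiden} and its passage to the limit are preserved by the enrichment. Once this is granted, the remainder of the argument is elementary algebra combined with the single-replica site factorisation in the thermal pure state.
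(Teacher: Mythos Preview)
Your approach is conceptually appealing but differs from the paper's and has a genuine gap that your final paragraph flags but does not close.

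The paper's proof works entirely with the already-established identity \eqref{eqFdSpertf4} for the \emph{original} limiting measure. It truncates $\xi_2\le M$ so that $\theta_2$ is bounded, writes
\[
\frac{\la \dT_1 e^{\theta_1}\dT_2 e^{\theta_2}\ra}{\la e^{\theta_1}e^{\theta_2}\ra}
=\frac{\la \dT_1 e^{\theta_1}\dT_2 e^{\theta_2}\ra/\la e^{\theta_1}\ra}{\la e^{\theta_1}e^{\theta_2}\ra/\la e^{\theta_1}\ra},
\]
approximates $1/x$ on a compact interval by a polynomial, expands the resulting powers of $\la e^{\theta_1}e^{\theta_2}\ra/\la e^{\theta_1}\ra$ into replicas, and applies \eqref{eqFdSpertf4} term-by-term with the bounded functions $f_{n+1}=\dT_2^1\exp\sum_{\ell\le n+1}\theta_2^\ell$. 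No new model, no new choice of $\lambda^N$, and no appeal to the thermal pure state.

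Your route instead needs the asymptotic Franz--de Sanctis identity for the \emph{enriched} model, and here is the gap. Identity \eqref{eqFdSpertf4} holds for the specific sequence $\lambda^N$ selected in Section~\ref{sec:passing_limit}, and that selection was made against a fixed countable list of constraints coming from the original model. Theorem~\ref{ThFdSiden} for the enriched model would only give $\e_\lambda|\text{FdS error}|\to 0$; it says nothing about the particular $\lambda^N$ already chosen. To make your argument go through you would have to go back and enlarge the countable family in Section~\ref{sec:passing_limit} to include the enriched-model FdS errors \emph{before} picking $\lambda^N$---a structural modification of the proof, not a ``verbatim'' application. A second, smaller issue: you take $f=\dT_1$, but $\dT_1$ is unbounded in $\xi_1$, so you would still need a truncation step of the same kind the paper uses. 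Finally, note that your argument consumes the thermal pure state factorisation of Section~\ref{SecAHb2} as an extra hypothesis, whereas the paper's proof of the lemma does not; this is harmless for the eventual application but makes your version of the lemma strictly weaker.
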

\begin{proof}
Let us take large $M\gg 1$ and consider a set $A=\{\xi_2:0\le \xi_2\leq M\}$. On this set, we have the bound $|\theta_2|\leq M_\lambda$ for some constant $M_\lambda$. If we write
$$
\frac{\rrac[a]{\dT_1 e^{  \theta_1} \dT_2 e^{ \theta_2}}}{\rrac[a]{e^{  \theta_1} e^{  \theta_2}}}
=
\frac{\rrac[a]{\dT_1 e^{  \theta_1} \dT_2 e^{ \theta_2}}/\la e^{  \theta_1}\ra}{\rrac[a]{e^{  \theta_1} e^{  \theta_2}}/\la e^{  \theta_1}\ra}\,,
$$
note that on the set $A$ the denominator is in the interval $[e^{-3M_\lambda},e^{3M_\lambda}]$, and approximating $1/x$ on this interval by a polynomial $\sum_{n=0}^r c_n x^n$ uniformly within error $\eps$, we get
\begin{align*}
\e\e_\diamond \frac{\rrac[a]{\dT_1 e^{  \theta_1} \dT_2 e^{ \theta_2}}}{\rrac[a]{e^{  \theta_1} e^{  \theta_2}}} \boldsymbol{1}(\xi_2\in A)
&\approx
\sum_{n=0}^r c_n \e\e_\diamond
\frac{\rrac[a]{\dT_1 e^{\theta_1} \dT_2 e^{\theta_2}}}{\la e^{  \theta_1}\ra}
 \Brac[r]{\frac{\rrac[a]{e^{  \theta_1} e^{  \theta_2}}}{\la e^{  \theta_1}\ra}}^n \boldsymbol{1}(\xi_2\in A)\,.
\end{align*}
We can represent the $n$th term on the right-hand side using replicas as
$$
c_n\e\e_\diamond
\frac{\rrac[a]{\dT_1^1 e^{ \sum_{\ell \le n+1}\theta_1^\ell} \dT_2^1 e^{ \sum_{\ell \le n+1}\theta_2^\ell}}}{\la e^{\theta_1}\ra^{n+1}}\boldsymbol{1}(\xi_2\in A)\,.
$$
We can then apply \eqref{eqFdSpertf4} with the function $f_{n+1}:=d_2^1\exp\sum_{\ell\le n+1}\theta_2^\ell$ for a fixed $\xi_2$ first and average over $\xi_2$ to rewrite this as
$$
c_n\e\e_\diamond \frac{\rrac[a]{\dT_1 e^{  \theta_1}}}{\rrac[a]{e^{  \theta_1}}}
\e\e_\diamond \rrac[a]{\dT_2 e^{ \theta_2}}
\rrac[a]{e^{  \theta_2}}^n \boldsymbol{1}(\xi_2\in A)\,.
$$
Summing all the terms and again using that $|1/x- \sum_{n=0}^r c_n x^n|\leq \eps$ on the interval $[e^{-3M_\lambda},e^{3M_\lambda}]$, we showed that (within error $2\eps$)
$$
\e\e_\diamond \frac{\rrac[a]{\dT_1 e^{  \theta_1} \dT_2 e^{ \theta_2}}}{\rrac[a]{e^{  \theta_1} e^{  \theta_2}}} \boldsymbol{1}(\xi_2\in A)
\approx
\e\e_\diamond \frac{\rrac[a]{\dT_1 e^{  \theta_1}}}{\rrac[a]{e^{  \theta_1}}}
\e\e_\diamond \frac{\rrac[a]{\dT_2 e^{  \theta_2}}}{\rrac[a]{e^{  \theta_2}}}\boldsymbol{1}(\xi_2\in A)\,.
$$
Letting $\eps\downarrow 0$ and then letting $M\uparrow\infty$ finishes the proof.
\end{proof}

We finally have all ingredients to finish the proof of Theorem \ref{MainTh}.

\begin{proof}[Proof of Theorem \ref{MainTh}]
The rest of the proof of Theorem \ref{MainTh} is very similar in spirit to the calculations in \cite{1RSB, finiteRSB}.  

Recall that $\sigma_j = \sigma(w,v_j,x_j)$ and $\sigma_j^\diamond = \sigma(w,v_j,x_j^\diamond)$ and we can interpret $\la\,\cdot\,\ra$ as the expectation with respect to $(x_j)$, $\e_\diamond$ as the expectation in $(x_j^\diamond)$ and $\e$ as the expectation in $w$, $(\xi_j)$ and $(v_j)$. Since all random variables indexed by $j=1,2$ are independent, if we denote by $\e_{|w}$ the conditional expectation given $w$ (that includes $\e_{\diamond}$) then the left-hand side of \eqref{FdSlim1r3} can be written as
$$
\e\Bigl( \e_{|w} \frac{\rrac[a]{\dT_1 e^{\theta_1}}}{\rrac[a]{e^{\theta_1}}}\Bigr)
\Bigl(\e_{|w}\frac{\rrac[a]{ \dT_2 e^{\theta_2}}}{\rrac[a]{e^{\theta_2}}}\Bigr)
=
\e\Bigl( \e_{|w} \frac{\rrac[a]{\dT_1 e^{\theta_1}}}{\rrac[a]{e^{\theta_1}}}\Bigr)^2.
$$
If we let 
$$
Y=Y(w):=  \e_{|w} \frac{\rrac[a]{\dT_1 e^{\theta_1}}}{\rrac[a]{e^{\theta_1}}}
= \e_{|w} \frac{y_1}{1+\lambda \sigma_1^\diamond} \frac{\rrac[a]{\sigma_1 e^{\theta_1}}}{\rrac[a]{e^{\theta_1}}}
$$ 
then \eqref{FdSlim1r3} gives that $\Var(Y(w))=0$ and $Y=\e Y$ almost surely. Let us write down $Y$ a bit more explicitly. Taking the expectation in $\xi_1$ first (which, again, does not appear in $\langle\, \cdot\,\rangle$) and using that, conditionally on $\sigma_1^\diamond$,
$$
y_1 = \frac{\xi_1}{1+\lambda \sigma_1^\diamond}\sim \operatorname{Exp}(1+\lambda\sigma_1^\diamond)\,,
$$
we can write
\begin{align*}
Y(w)&=\e_{|w}\int_0^\infty (1+\lambda\sigma_1^{\diamond})e^{-(1+\lambda\sigma_1^\diamond)y} \frac{y}{1+\lambda\sigma_1^{\diamond}}
\frac{\rrac[a]{\sigma_1 e^{\theta_1}}}{\rrac[a]{e^{\theta_1}}}\, dy
\\
&=\e_{|w}\int_0^\infty e^{-\lambda\sigma_1^\diamond y} \frac{\rrac[a]{\sigma_1 e^{\theta_1}}}{\rrac[a]{e^{\theta_1}}}ye^{-y}\, dy
\\
&=\e_{|w}\int_0^\infty e^{-\lambda\sigma_1^\diamond y} \frac{\rrac[a]{\sigma_1 (1+\lambda\sigma_1)e^{-\lambda y\sigma_1}}}{\rrac[a]{(1+\lambda\sigma_1)e^{-\lambda y\sigma_1}}}ye^{-y}\, dy
\\
&=\e_{|w}\int_0^\infty \rrac[a]{e^{-\lambda\sigma_1 y}} \frac{\rrac[a]{\sigma_1 (1+\lambda\sigma_1)e^{-\lambda y\sigma_1}}}{\rrac[a]{(1+\lambda\sigma_1)e^{-\lambda y\sigma_1}}}ye^{-y}\, dy\,,
\end{align*}
where in the last line we used that the expectation $\e_\diamond$ of $\sigma_1^\diamond = \sigma(w,v_1,x_1^\diamond)$ with respect to $x_1^\diamond$ is the same as the expectation $\la\,\cdot\,\ra$ of $\sigma_1 = \sigma(w,v_1,x_1)$ with respect to $x_1$. To simplify the notation, let us now omit index $1$, write $\sigma = \sigma(w,v,x)$, 
$$
Y(w)= \e_{|w}\int_0^\infty \rrac[a]{e^{-\lambda\sigma y}} \frac{\rrac[a]{\sigma (1+\lambda\sigma)e^{-\lambda y\sigma}}}{\rrac[a]{(1+\lambda\sigma)e^{-\lambda y\sigma}}}ye^{-y}\, dy\,,
$$
where $\la\,\cdot\,\ra$ is the expectation with respect to $x$.

Up to this point we only considered $\lambda\in \Lambda$ and showed that $Y(w)=\e Y$ almost surely for all such $\lambda.$ Therefore, for $w$ in a set $\mathcal{A}$ of probability one,  $Y(w)=\e Y$ for all $\lambda\in \Lambda.$ However, both 
$$ g_w:\gamma\mapsto g_w(\gamma):=\e_{|w}\int_0^\infty \rrac[a]{e^{-\gamma\sigma y}} \frac{\rrac[a]{\sigma (1+\gamma\sigma)e^{-\gamma y\sigma}}}{\rrac[a]{(1+\gamma\sigma)e^{-\gamma y\sigma}}}ye^{-y}\, dy$$
for a fixed $w$, as well as its $w$-expectation $\e g_w(\gamma)$ are analytic functions of $\gamma$ in a small neighbourhood of $0$ (the measure defining the bracket $\langle \, \cdot\, \rangle$ appearing in the definition of $g_w(\gamma)$ does not depend on $\gamma$, it depends instead on the set $\Lambda$ of limiting values of the sequence $(\lambda_k^N)$). Since $g_w(\gamma)-\e g_w(\gamma)=0$ almost surely for all $\gamma \in \Lambda$ where the set $\Lambda$ accumulates at $0$, then $g_w(\gamma)=\e g_w(\gamma)$ almost surely for all $\gamma$ in a small neighbourhood of zero, namely, for all $w\in\mathcal{A}$, the equation $Y(w)=\e Y$ holds for all $\lambda$ in a small neighbourhood of zero. From now on we assume that $w\in\mathcal{A}$.

Since 
$$
Z(w):= \e_{|w}\int_0^\infty \rrac[a]{\sigma (1+\lambda\sigma)e^{-\lambda y\sigma}} ye^{-y}\, dy
$$
is a linear function of the magnetisation $\e_v \bs(w,v)$ (by Taylor expansion and using that $\sigma\in\{-1,1\}$ so that no power of $\sigma$ other than $1$ appear) and, thus, independent of $w$ (we already proved in Section~\ref{sec:magn} that the magnetisation concentrates),
$$
X(w):=\frac{Z(w)-Y(w)}{\lambda}= \e_{|w}\int_0^\infty \rrac[a]{\sigma e^{-\lambda y\sigma }} \frac{\rrac[a]{\sigma (1+\lambda\sigma)e^{-\lambda y\sigma}}}{\rrac[a]{(1+\lambda\sigma)e^{-\lambda y\sigma}}}ye^{-y}\, dy
$$
is almost surely constant as well for all $\lambda$ in a small neighbourhood of zero. 

At $\lambda=0$ this equals to the overlap $\e_{|w}\la\sigma\ra^2 = R_{1,2}^\infty(w)$, which we already knew to concentrate. We will now show that the fact that all derivatives in $\lambda$ at zero of $X(w)$ are independent of $w$ implies that all multioverlaps concentrate. Given $n\geq 1,$ when we compute $\frac{\partial^n}{\partial \lambda^n}$, every time we apply derivative to the denominator we use that 
$$
\frac{\partial}{\partial\lambda}\frac{1}{\rrac[a]{(1+\lambda\sigma)e^{-\lambda y\sigma}}^k} = - k\frac{\rrac[a]{[\sigma - y\sigma (1+\lambda\sigma)]e^{-\lambda y\sigma}}}{\rrac[a]{(1+\lambda\sigma)e^{-\lambda y\sigma}}^{k+1}}\,.
$$
Notice that the numerator at $\lambda=0$ of this last expression is equal to $(1-y)\la\sigma\ra$, so applying derivative to the denominator in $X$'s expression brings out another ``replica'' $\la\sigma\ra$. In fact, applying $\frac{\partial^n}{\partial \lambda^n}$ to $X$'s denominator will produce the term (inside the integral)
$$
(-1)^n n!
\rrac[a]{\sigma e^{-\lambda y\sigma }} \frac{\rrac[a]{\sigma (1+\lambda\sigma)e^{-\lambda y\sigma}}}{\rrac[a]{(1+\lambda\sigma)e^{-\lambda y\sigma}}^{n+1}}
\rrac[a]{[\sigma - y\sigma (1+\lambda\sigma)]e^{-\lambda y\sigma}}^n\,,
$$
which at $\lambda=0$ equals $(-1)^n n! (1-y)^n \la\sigma\ra^{n+2}.$ After integration, this term equals (with $\xi\sim \operatorname{Exp}(1)$)
$$
n!\, \e_{|w}\la\sigma\ra^{n+2}\int_0^\infty  (y-1)^n ye^{-y}\, dy
 = n! \,R_{1,\ldots,n+2}^\infty\, \e \xi(\xi-1)^n\,.
$$
Note that the coefficient $\e \xi(\xi-1)^n = \e (\xi-1)^{n+1} + \e (\xi-1)^n > 0$ for $n\geq 1$. What we have just discussed is the term obtained by applying all derivatives to the denominator only which is inside the integral in $X(w)$'s expression. If along the way we apply a derivative in $\lambda$ to any factor instead in the numerator, this will not create a new replica, so all those terms will produce a linear combination of multioverlaps on strictly less than $n+2$ replicas, which by induction we assume to be independent of $w$. This implies that $R_{1,\ldots,n+2}^\infty$ is independent of $w$.

Therefore, all multioverlaps are constant in this particular subsequential thermodynamic limit, $R_{1,\ldots,n}^\infty(w)=\e R_{1,\ldots,n}^\infty$ almost surely. This contradicts \eqref{MOconcentrationPf3} which was a consequence of assuming the existence of a subsequence along which \eqref{MOconcentrationPf} holds, i.e., along which $\e_{\lambda}\e\la (R_{1,\ldots,n}  - \e\la R_{1,\ldots,n} \ra)^2\ra \geq \delta$ for any $\delta >0$. Therefore such subsequence does not exist, which proves Theorem~\ref{MainTh}.
\end{proof}

Concentration of multioverlaps means that the distribution $dv\circ(v\mapsto  \bs(w,v))^{-1}$ is almost surely independent of $w$ and is equal to some $\zeta\in\Pr[-1,1]$. As a result, as we mentioned below Theorem~\ref{MainTh}, the spins $\sigma_i^\ell$ (in this subsequential limit) can be generated by taking an i.i.d. sequence $m_i\sim\zeta$ and then flipping independent $\pm 1$ valued coins with expected value $m_i$ to output $(\sigma_i^\ell)_{\ell \ge 1}$.

\section{The case of soft spins: proof of Theorem~\ref{MainThG}}\label{ProofSoft}
The proof for soft bounded spins follows closely the one for Ising spins, so we will be more brief.
\subsection{Magnetisation, $n=1$, and generalised overlaps, $n=2$} Because of the assumption of factorised prior \eqref{ProductM}, the proof of concentration of the magnetisation $R_{1}$ is identical to the one provided in Section~\ref{sec:magn}. 

The proof of Theorem~\ref{GenOVer} is a straightforward extension of the one of Theorem~\ref{OverlapConc}. Indeed, simply notice that the key relation \eqref{remarkable} can be easily extended to the generalised overlap \eqref{gene_overlap} based on $\mathcal{L}^{(k)}:=N^{-1}d\mathcal{H}^{\rm gauss}/d\lambda_{0k}$ with Hamiltonian \eqref{OverPertG}. Then, given index $k$ in the perturbation Hamiltonian \eqref{OverPertG}, after redefining $(\sigma^*_i)^k,(\sigma^\ell_i)^k$ as generalised spins (still certainly taking values in $[-1,1]$), the proof of concentration of $\mathcal{L}^{(k)}$ is identical to the one of Proposition~\ref{L-concentration}, when using the change of variable $[2^{-k-1},2^{-k}]\ni\lambda_{0k}\to \lambda_{0k}2^{-k}$ with $\lambda_{0k}\in[1/2,1]$ and then absorbing the $2^{-k}$ in $\eps_N$.

\subsection{Multioverlaps, $n\ge 3$}
The key result is again the following modification of Theorem~\ref{ThFdSiden}, whose proof is identical and will not be repeated.
\begin{theorem}[Franz-de Sanctis identities in inference, soft spins case]\label{ThFdSidenG}
Let $i$ be a random uniform index in $\{1,\ldots,N\}$ and, for any $I\in \II$ and $\xi\sim \mathrm{Exp}(1)$, 
$$
\theta_{iI}^\ell:=\ln(1+ \lambda_I P_I(\sigma_{i}^\ell))- \lambda_Iy_{iI}  P_I(\sigma_{i}^\ell) \,,\quad
y_{iI}=\frac{\xi}{1+\lambda_I P_I(\sigma_{i}^*)}\,,\quad
\dT_{iI}^\ell= \frac{y_{iI}  P_I(\sigma_i^\ell)}{1+\lambda_I P_I(\sigma_{i}^*)}\,.
$$
Under the same hypotheses as in Theorem~\ref{ThFdSiden} we have
\begin{equation}
\e_{\lambda} \left|
\e \frac{\la{f_n  \dT_{iI}^1 e^{\sum_{\ell\le n} \theta_{iI}^\ell}}\ra}{\la{e^{  \theta_{iI}}}\ra^n}
- \e \la{f_n }\ra
\e \frac{\la{\dT_{iI}e^{  \theta_{iI}}}\ra}{\la{e^{ \theta_{iI}}}\ra}
\right| \leq 
C_I \Big\{\frac{1}{s_N}+\Big(\frac{v_N N}{s_N^2}\Big)^{1/3}\Big\}^{1/2}\,.
\label{eqFdSpertfThG}
\end{equation}
\end{theorem}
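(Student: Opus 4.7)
The plan is to mirror the proof of Theorem~\ref{ThFdSiden} almost verbatim, replacing the spin $\sigma_{i_{jk}}$ at every occurrence by the bounded polynomial $P_I(\sigma_{i_{jI}})$ and the parameter $\lambda_k \in [2^{-k-1},2^{-k}]$ by $\lambda_I \in [1/2,1]$. Since $|P_I|\le 1/2$ uniformly and $\lambda_I\le 1$, every factor $1+\lambda_I P_I(\cdot)$ stays in $[1/2,3/2]$, so analogues of $\mathcal{H}_k'$, $\mathcal{H}_k''$, $\mathcal{L}_k$, and $\tilde{\mathcal{L}}_k$ from \eqref{Hkprime}, \eqref{Lkprime}, \eqref{sec_der_pert} are well-defined, with all pointwise bounds controlled by absolute constants times $\pi_I$.

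First I would establish the soft-spin analogue of Proposition~\ref{thm:concen_der}, namely that $\e_{\lambda_I}\e\la(\tilde{\mathcal{L}}_I - \e\la \tilde{\mathcal{L}}_I\ra)^2\ra$ is bounded by $C(s_N^{-1} + (v_N N/s_N^2)^{1/3})$ for some constant $C$ that may depend on $I$ only through the uniform sup-norm bound on $P_I$. The thermal part is obtained by integrating the analogue of \eqref{second_der} over $\lambda_I\in[1/2,1]$; the interval now has fixed length $1/2$ rather than $2^{-k-1}$, so the prefactor $2^{k+1}$ present in the binary case disappears. For the quenched part I would define a convex surrogate $\tilde F(\lambda_I):=F_N^{\rm pert}+\sum_{j\le\pi_I}(c_1\lambda_I^2\xi_{jI}-c_2\ln(1-\lambda_I/2))$ with absolute constants $c_1,c_2$ large enough to dominate the (bounded) negative part of the second derivative coming from \eqref{sec_der_pert}, then apply Lemma~\ref{lemmaConvexity} exactly as in \eqref{usable-inequ}--\eqref{ineq_squared_2} and optimise in $\delta_N$ with $\delta_N^3=\Theta(s_N^{-2}(Nv_N+s_N))$. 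The Nishimori-identity step deducing concentration of $\tilde{\mathcal{L}}_I$ from that of $\mathcal{L}_I$ carries over unchanged because it only uses boundedness of the integrand, not the binary nature of $\sigma$.

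The second and final step is the Poisson-algebra identity \eqref{FdSpip}: condition on $\pi_I=r$, expand $\e\la f_n\tilde{\mathcal{L}}_I(\sigma^1)\ra$ as a sum over $r\ge 1$ weighted by $s_N^r e^{-s_N}/r!$, use the i.i.d.\ symmetry among the $r$ Poisson atoms to isolate the $j=1$ summand (yielding the combinatorial factor $r$ that cancels $s_N^{-1}\cdot s_N^r/r!=s_N^{r-1}/(r-1)!$), then substitute $m=r-1$ so that the remaining $r-1$ atoms can be re-absorbed into a fresh Poisson$(s_N)$ Gibbs measure. This absorbs back into $\mathcal{H}^{\rm exp}_N$ and produces the ratio of Gibbs brackets on the right-hand side of \eqref{eqFdSpertfThG}, after relabelling $(i_{1I},\xi_{1I})\to (i,\xi)$ with $i\sim\mathcal{U}\{1,\dots,N\}$ and $\xi\sim\mathrm{Exp}(1)$ independent of everything else. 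The Cauchy--Schwarz step combining this identity with the $\tilde{\mathcal{L}}_I$ concentration bound then yields \eqref{eqFdSpertfThG}.

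The main obstacle I anticipate is purely bookkeeping rather than conceptual: one must verify that the absolute-value bounds used in \eqref{der_freeEn_bounded} and the convexity construction still close up when $P_I$ replaces $\sigma$, and that the constant $C_I$ in the statement can be taken independent of $I$ (or at worst polynomially dependent on $|P_I|_\infty$, which is bounded). The Poisson manipulation itself is insensitive to the spin alphabet, so the binary-to-soft translation introduces no new difficulty there.
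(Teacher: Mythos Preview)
Your proposal is correct and follows exactly the approach the paper takes: the paper itself states that the proof of Theorem~\ref{ThFdSidenG} ``is identical [to that of Theorem~\ref{ThFdSiden}] and will not be repeated,'' and your write-up supplies precisely the bookkeeping details that this one-line remark leaves implicit. Your observation that the convex surrogate must be modified (e.g.\ replacing $-\ln(1-\lambda_k)$ by something like $-c_2\ln(1-\lambda_I/2)$, since now $\lambda_I$ ranges up to $1$) and that the interval length $1/2$ replaces $2^{-k-1}$ in the thermal bound are exactly the adjustments required, and the Poisson manipulation is indeed alphabet-insensitive.
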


\subsection{Passing to the limit} Suppose that \eqref{MOconcentrationG} fails for some indices $k_1,\ldots,k_n$, namely, that there exists some subsequence $(N_j)_{j\geq 1}$ along which
\begin{equation}
\e_{\lambda}
\e\big\la (R_{1,\ldots, n}^{(k_1,\ldots,k_n)} - \e\la R_{1,\ldots, n}^{(k_1,\ldots,k_n)} \ra)^2\big\ra \geq \delta>0
\label{MOconcentrationPfA}
\end{equation}
for some (any) $\delta>0$. On the other hand, \eqref{eqFdSpertfThG} holds for any set of arguments of $f_n$ and $I\in \II$. Since this is a countable collection, the equations \eqref{Oconcentration2G}, \eqref{eqFdSpertfThG} and \eqref{MOconcentrationPfA} imply that we can choose some $\lambda=\lambda^N$ varying with $N$, with $\lambda_{0k}^N\in [2^{-k-1},2^{-k}]$ and $\lambda_I^N\in [1/2,1]$, such that, along the same subsequence $(N_j)_{j\geq 1}$,
\begin{equation}
\e\big\la (R_{1,\ldots, n}^{(k_1,\ldots,k_n)} - \e\la R_{1,\ldots, n}^{(k_1,\ldots,k_n)} \ra)^2\big\ra \geq \frac{\delta}{2}>0\,,
\label{MOconcentrationPf2G}
\end{equation}
and
\begin{equation}
\e\big\la (R_{1,2}^{(k)}- \e\la R_{1,2}^{(k)} \ra)^2\big\ra \to 0\,,\qquad
\left|
\e \frac{\la{f_n  \dT_{iI}^1 e^{\sum_{\ell\le n} \theta_{iI}^\ell}}\ra}{\la{e^{  \theta_{iI}}}\ra^n}
- \e \la{f_n }\ra
\e \frac{\la{\dT_{iI}e^{\theta_{iI}}}\ra}{\la{e^{ \theta_{iI}}}\ra}
\right|\to 0
\label{eqFdSpertf32G}
\end{equation} 
jointly for all possible arguments of $f_n$ and $I\in\II$, where now the Gibbs measure $\la\,\cdot\,\ra$ is for the Hamiltonian with these specific parameters $\lambda^N$ (see Section~\ref{sec:passing_limit} for a detailed proof of existence of such $\lambda^N$). 

As in the binary case, by the Nishimori identity, we can replace the disorder $\sigma^*$ in all the integrands in the second equation in \eqref{eqFdSpertf32G} by another replica. For convenience of notation, we will denote this new replica by $\sigma^\diamond$ to distinguish from the disorder $\sigma^*$ and at the same time not to occupy any specific index. Then, the second equation in \eqref{eqFdSpertf32G} can be written as
\begin{equation}
\left|
\e\e_\diamond \frac{\la{f_n  \dT_{iI}^1 e^{\sum_{\ell\le n} \theta_{iI}^\ell}}\ra}{\la{e^{  \theta_{iI}}}\ra^n}
- \e\e_\diamond \la{f_n }\ra
\e\e_\diamond \frac{\la{\dT_{iI}^1e^{\theta_{iI}}}\ra}{\la{e^{ \theta_{iI}}}\ra}
\right|\to 0
\label{eqFdSpertf32Gcp}
\end{equation}
where $\e_\diamond$ denotes the Gibbs average $\la\,\cdot\,\ra$ with respect to the replica $\sigma^\diamond$ only, $\la\,\cdot\,\ra$ denotes the Gibbs average with respect to all other replicas, $f_n$ is a function of finitely many spins on $n$ replicas and on $\sigma^\diamond$, and
$$
\theta_{iI}^\ell=\ln(1+ \lambda_I^N P_I(\sigma_{i}^\ell))- \lambda_I^Ny_{iI}  P_I(\sigma_{i}^\ell) \,,\quad
y_{iI}=\frac{\xi}{1+\lambda_I^N P_I(\sigma_{i}^\diamond)}\,,\quad
\dT_{iI}^\ell= \frac{y_{iI}  P_I(\sigma_i^\ell)}{1+\lambda_I^N P_I(\sigma_{i}^\diamond)}\,.
$$
Now, by Cantor's diagonalisation let us extract a further subsequence of $(N_j)_{j\geq 1}$ such that all $\lambda^{N}_{0k} \to \lambda_{0k}\in  [2^{-k-1},2^{-k}]$ and $\lambda_I^N\to \lambda_I\in [1/2,1]$ converge and, moreover, the distribution of all spins on all replicas $(\sigma_i^\ell)_{i,\ell\geq 1}$ under $\e\la\,\cdot\,\ra$ also converges weakly in the finite-dimensional sense along this subsequence.

\subsection{Aldous-Hoover representation.}\label{SecAHsoft1}
In the case of soft spins, the Aldous-Hoover representation \eqref{AHrepr} can be  expressed in terms of multioverlaps as follows. If we denote
$$
\bs^{(k)}(w,u,v):=\int_0^1\! \sigma(w,u,v,x)^k\, dx
$$
then the asymptotic analogue of the multioverlap above is
\begin{equation}
R_{\ell_1,\ldots, \ell_n}^{(k_1,\ldots,k_n)}\to R_{\ell_1,\ldots, \ell_n}^{(k_1,\ldots,k_n)\infty}(w,(u_{\ell_j})_{j\le n}) := \int_0^1 \prod_{j\leq n} \bs^{(k_j)}(w,u_{\ell_j},v)\,dv\,,
\end{equation}
in the sense that joint moments of all multioverlaps before the limit converge to joint moments of these analogues in the limit. For the limiting generalised overlap 
\begin{align}
R_{1,2}^{(k)}\to R_{1,2}^{(k)\infty}(w,u_1,u_2) := \int_0^1 \bs^{(k)}(w,u_{1},v)\bs^{(k)}(w,u_{2},v)\,dv\,.  \label{def:geneOver}
\end{align}
As before, \eqref{MOconcentrationPf2G} and \eqref{eqFdSpertf32G} become in this subsequential limit  
\begin{equation}
\e\big\la (R_{1,\ldots, n}^{(k_1,\ldots,k_n)\infty} - \e\la R_{1,\ldots, n}^{(k_1,\ldots,k_n)\infty} \ra)^2\big\ra \geq \frac{\delta}{2}>0
\label{MOconcentrationPf2Ga}
\end{equation}
together with
\begin{equation}
\e \big\la (R_{1,2}^{(k)\infty})^2 \big\ra = \big(\e\la R_{1,2}^{(k)\infty}  \ra\big)^2\,,\qquad
\e\e_\diamond  \frac{\la{f_n  \dT_{1I}^1 e^{\sum_{\ell\le n} \theta_{1I}^\ell}}\ra}{\la{e^{  \theta_{1I}}}\ra^n}
= \e_\diamond \e \la{f_n }\ra\,
\e_\diamond \e \frac{\la{\dT_{1I}e^{ \theta_{1I}}}\ra}{\la{e^{ \theta_{1I}}}\ra}
\label{eqFdSpertf32Ga}
\end{equation} 
for any bounded function $f_n$ of finitely many spins $\sigma_i^\diamond, (\sigma_i^\ell)$ for $2\leq i\leq m$ and $\ell\le n$ for some $m$ and $n$, and
$$
\theta_{1I}^\ell=\ln(1+ \lambda_I P_I(\sigma_{1}^\ell))- \lambda_I y_{1I}  P_I(\sigma_{1}^\ell) \,,\quad
y_{1I}=\frac{\xi}{1+\lambda_I P_I(\sigma_{1}^\diamond)}\,,\quad
\dT_{1I}^\ell= \frac{y_{1I} P_I(\sigma_1^\ell)}{1+\lambda_I P_I(\sigma_{1}^\diamond)}\,,
$$
for some $\lambda_I\in [1/2,1]$, $\xi\sim {\rm Exp}(1)$ independently of everything else.

\subsection{Thermal pure state} \label{SecAHsoft2}
As in the Ising case, the fact that the limiting generalised overlaps \eqref{def:geneOver} concentrate (by Theorem~\ref{GenOVer}), namely $R_{1,2}^{(k)\infty}(w,u_1,u_2)=\e R_{1,2}^{(k)\infty}$ almost surely, means that $\bs^{(k)}(w,u,v) = \bs^{(k)}(w,v)$ almost surely (i.e., the function does not depend on $u$) and 
$$
\int_0^1 \bs^{(k)}(w,v)^2\,dv = c_k\,.
$$
Indeed, if we consider a (random) measure $du\circ(u\mapsto \bs^{(k)}(w,u,\,\cdot\,))^{-1}$ on $(L^2[0,1],dv)$, the concentration of the overlap means that the scalar product between two points (functions in $L^2$) sampled from this measure is constant, which means that the measure concentrates on one (random) function $\bs^{(k)}(w,\,\cdot\,)$ on the sphere of fixed constant radius $\sqrt{c_k}$ in $L^2$. This implies that
\begin{equation}
(\sigma_i^\ell)_{i,\ell\geq 1} \stackrel{\rm d}{=}\big(\sigma(w,v_i,x_{i,\ell})\big)_{i,\ell\geq 1}
\label{AHso}
\end{equation}
for some (any) function $\sigma$ of three variables such that
$$
\bs^{(k)}(w,u,v)=\bs^{(k)}(w,v)=\int_0^1 \sigma(w,v,x)^k\, dx\,.
$$
In particular the multioverlaps verify
$$R_{\ell_1,\ldots, \ell_n}^{(k_1,\ldots,k_n)\infty}= R_{1,\ldots, n}^{(k_1,\ldots,k_n)\infty}(w)\,.$$
\subsection{Concentration of multioverlaps, $n\geq 3$.}\label{SecAHsoft3}
From \eqref{eqFdSpertf32Ga} we derive the analogue of \eqref{FdSlim1r3} by an identical proof.
\begin{lemma}[A decoupling lemma]
If $\xi_1,\xi_2\sim \operatorname{Exp}(1)$ are independent then, for all $I\in\II$,
\begin{equation}
\e\e_\diamond \frac{\rrac[a]{\dT_1 e^{\theta_1} \dT_2 e^{\theta_2}}}{\rrac[a]{e^{\theta_1} e^{\theta_2}}}
= 
\e\e_\diamond \frac{\rrac[a]{\dT_1 e^{ \theta_1}}}{\rrac[a]{e^{ \theta_1}}}
\e\e_\diamond \frac{\rrac[a]{\dT_2 e^{\theta_2}}}{\rrac[a]{e^{ \theta_2}}}\,,
\label{FdSlim1r3S}
\end{equation}
where, for some $\lambda\in [1/2,1]$, 
$$
\theta_j:=\ln(1+ \lambda P_I(\sigma_j))- \lambda y_j  P_I(\sigma_j) \,,\quad
y_j:=\frac{\xi_j}{1+\lambda P_I( \sigma_j^\diamond)}\,,\quad
\dT_j:= \frac{y_j P_I(\sigma_j)}{1+\lambda P_I( \sigma_j^\diamond)}\,.
$$
\end{lemma}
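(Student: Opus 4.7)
The proof plan is to follow exactly the same strategy as for the binary decoupling lemma in the Ising case, since the only structural use of the spins there was through the Franz-de Sanctis identities (Theorem \ref{ThFdSiden}), which have a direct analogue in the soft spins setting (Theorem \ref{ThFdSidenG}). First I would observe that, because $P_I$ takes values in $[-1/2,1/2]$ and $\lambda\in[1/2,1]$, the quantity $1+\lambda P_I(\sigma)$ lies in $[3/4,5/4]$, so $\ln(1+\lambda P_I(\sigma_j))$ is uniformly bounded, and the denominator $\rrac[a]{e^{\theta_1}e^{\theta_2}}$ is pinched in a positive interval as soon as $\xi_1,\xi_2$ are bounded. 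This is precisely the ingredient that makes the binary argument go through.

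Next, fix a large $M\gg 1$ and restrict $\xi_2$ to the set $A:=\{\xi_2\le M\}$. On $A$ we have $|\theta_2|\le M_{\lambda,I}$ for some constant $M_{\lambda,I}$ depending only on $\lambda$, $I$, and $M$ (via $y_2\le M/(1-\lambda/2)$). Dividing numerator and denominator by $\la e^{\theta_1}\ra$, the ratio in the denominator of $\rrac[a]{\dT_1 e^{\theta_1}\dT_2 e^{\theta_2}}/\rrac[a]{e^{\theta_1}e^{\theta_2}}$ lies in a compact positive interval on $A$. I then approximate $1/x$ uniformly on that interval by a polynomial $\sum_{n=0}^r c_n x^n$ up to error $\eps$, so that
\begin{equation*}
\e\e_\diamond \frac{\rrac[a]{\dT_1 e^{\theta_1}\dT_2 e^{\theta_2}}}{\rrac[a]{e^{\theta_1} e^{\theta_2}}}\boldsymbol{1}(\xi_2\in A)
\approx \sum_{n=0}^r c_n\, \e\e_\diamond \frac{\rrac[a]{\dT_1 e^{\theta_1}\dT_2 e^{\theta_2}}}{\la e^{\theta_1}\ra}\,\Brac[r]{\frac{\rrac[a]{e^{\theta_1}e^{\theta_2}}}{\la e^{\theta_1}\ra}}^{\!n}\boldsymbol{1}(\xi_2\in A).
\end{equation*}

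Each term on the right can be rewritten using $n+1$ replicas of the Gibbs measure as
\begin{equation*}
c_n\,\e\e_\diamond \frac{\rrac[a]{\dT_1^1 e^{\sum_{\ell\le n+1}\theta_1^\ell}\,\dT_2^1 e^{\sum_{\ell\le n+1}\theta_2^\ell}}}{\la e^{\theta_1}\ra^{n+1}}\boldsymbol{1}(\xi_2\in A).
\end{equation*}
Holding $\xi_2$ fixed, I then apply the soft-spin Franz-de Sanctis identity \eqref{eqFdSpertf32Ga} to the function $f_{n+1}:=\dT_2^1 e^{\sum_{\ell\le n+1}\theta_2^\ell}$, which depends only on the spins at site $2$ (so the spin-index-$1$ perturbation in \eqref{eqFdSpertf32Ga} is decoupled from $f_{n+1}$). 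This factors each term as $c_n\,\e\e_\diamond[\rrac[a]{\dT_1 e^{\theta_1}}/\rrac[a]{e^{\theta_1}}]\cdot \e\e_\diamond \rrac[a]{\dT_2 e^{\theta_2}}\rrac[a]{e^{\theta_2}}^n \boldsymbol{1}(\xi_2\in A)$. Summing back over $n$ and using the polynomial approximation again in reverse yields the factorization \eqref{FdSlim1r3S} up to error $O(\eps)$ on $\{\xi_2\le M\}$.

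Finally I let $\eps\downarrow 0$ and then $M\uparrow\infty$; the tail contribution from $\{\xi_2>M\}$ vanishes since $\dT_j, e^{\theta_j}/\la e^{\theta_j}\ra$ are integrable against the exponential noise (the $\xi_2$-tails of $\dT_2 e^{\theta_2}/\rrac[a]{e^{\theta_2}}$ decay exponentially in $\xi_2$ thanks to the factor $e^{-\lambda y_2 P_I(\sigma_2)}$ combined with the positivity of $P_I(\sigma_2^\diamond)$ after Nishimori-type symmetrization). The only step that needs minor care beyond a verbatim repetition of the binary proof is the uniform control of $|\theta_2|$ on $A$, which is where the boundedness of $P_I$ and the separation of $1+\lambda P_I$ from zero enter. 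I do not anticipate any genuinely new difficulty, as the algebraic scaffolding of the argument is identical to the Ising case and only uses the Franz-de Sanctis identity as a black box.
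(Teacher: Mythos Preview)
Your approach is essentially identical to the paper's, which simply states that the proof is ``identical'' to the binary decoupling lemma and gives no further details. One minor correction: your tail justification invokes a spurious ``positivity of $P_I(\sigma_2^\diamond)$'' (false, since $P_I$ can take negative values); the correct and simpler reason the $\{\xi_2>M\}$ contribution vanishes is that $\bigl|\la \dT_j e^{\theta_j}\ra/\la e^{\theta_j}\ra\bigr|\le C\xi_j$ (the ratio is a tilted Gibbs average of $\dT_j$, and $|\dT_j|\le C\xi_j$), which is integrable against the $\mathrm{Exp}(1)$ law of $\xi_j$.
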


\begin{proof}[Proof of Theorem \ref{MainThG}] To finish the proof, we will arrive at a contradiction with \eqref{MOconcentrationPf2Ga}. The proof is identical to the one in the binary case up to the point were (here again $\sigma = \sigma(w,v,x)$ and $\langle\, \cdot\,\rangle$ is the $x$-expectation)
$$
X(w):= \e_{|w}\int_0^\infty \rrac[a]{P(\sigma) e^{-\lambda P(\sigma)y}} \frac{\rrac[a]{P(\sigma) (1+\lambda P(\sigma))e^{- \lambda P(\sigma)y}}}{\rrac[a]{(1+\lambda P(\sigma))e^{-\lambda P(\sigma)y}}}ye^{-y}\, dy
$$
is almost surely constant, for all polynomials with coefficients bounded by $1$ and small enough $\lambda$. By taking derivatives in $\lambda$ as in the binary case, we get by induction that 
$$
\e_{|w}\la P(\sigma)\ra^{n} = \e_v \la P(\sigma)\ra^{n}
$$ 
are independent of $w$. By expanding this in the coefficients of $P(\sigma)$, we get that all multioverlaps $\e_v \prod_{\ell\le n} \la\sigma^{k_\ell}\ra = R_{1,\ldots,n}^{(k_1,\ldots, k_n)\infty}$ concentrate. This contradicts \eqref{MOconcentrationPf2Ga}, which finishes the proof.
\end{proof}

In this case, the concentration of multioverlaps means that we can redefine $\sigma(w,v,x)$ in \eqref{AHso} and find a function $\sigma(v,x)$ such that the array $(\sigma_i^\ell)_{i,\ell\geq 1}$ is equal in distribution to $(\sigma(v_i,x_{i,\ell}))_{i,\ell\geq 1}$, see similar arguments in Section~\ref{sec:proof_coro} below. The distribution of this array can be encoded via a random measure $\mu(v):=dx\circ(x\mapsto  \sigma(v,x))^{-1}\in \Pr[-1,1]$ with the distribution $\zeta\in \Pr(\Pr[-1,1])$.

\section{Proof of the corollaries: asymptotic representation and decoupling}\label{sec:proof_coro}

As a direct consequence of Theorems \ref{MainTh} and \ref{MainThG}, the proofs of the corollaries follow. We focus on the case where the spins are in $[-1,1]$, but nothing changes when they are binary.

\begin{proof}[Proof of Corollary \ref{cor:asymp_spin_dist}]
    Let $\lambda^N$ be an appropriate sequence of perturbation parameters allowing for multioverlap concentration (which exist by Theorem \ref{MainThG}). Take a subsequence along which it converges, with limit $\lambda$. Multioverlap concentration means that along every subsubsequence of system sizes s.t. also the spin variables $(\sigma_i^\ell)_{i,\ell\geq1}$ converge in distribution, the associated asymptotic multioverlaps $R_{\ell_1,\ldots, \ell_n}^{(k_1,\ldots,k_n)\infty}(w,(u_{\ell_j})_{j\le n})$ are a.s. constant for all $(k_1,\ldots,k_n)$. This implies that there exist fixed values $w_0\in[0,1]$ and $(u_{0,\ell})_{\ell\geq1}\in[0,1]^\N$ s.t. for all $(k_1,\dots,k_n)$, $$R_{\ell_1,\ldots, \ell_n}^{(k_1,\ldots,k_n)\infty}(w,(u_{\ell_j})_{j\le n}) = R_{\ell_1,\ldots, \ell_n}^{(k_1,\ldots,k_n)\infty}(w_0,(u_{0,\ell_j})_{j\le n})$$ almost surely. 

    We now prove that we may take the Aldous-Hoover limit to be given (for the spin $\sigma_i^\ell$, with $i,\ell\geq1$) by a new function $$\tilde \sigma_\lambda(v_i,x_{i,\ell}) := \sigma(w_0,u_{0,\ell},v_i,x_{i,\ell}).$$ By doing so, the values of the means of all the multioverlaps and their products remain unchanged. This implies that the joint distribution of the variables $(\tilde \sigma_\lambda(v_i,x_{i,\ell}))_{i,\ell\ge 1}$ is the same as the one of $(\sigma(w,u_\ell,v_{i},x_{i,\ell}))_{i,\ell\ge 1}$. Indeed, recall that any generic joint moment of spins can be straightforwardly expressed as the mean of a product of multioverlaps. By what we said previously these moments are asymptotically the same when computed using representation $\sigma$ (which may depend on the considered subsubsequence, and thus on $\lambda$) or the simplified one $\tilde \sigma_\lambda$: the joint moments generating functions of spin variables distributed according to both Aldous-Hoover limits are the same. This proves that the joint distributions of both limits are the same. Therefore, they are equivalent.
    
    The conclusion of the Corollary~\ref{cor:asymp_spin_dist} is obtained by re-expressing the fact that the distributional limit of $(\sigma_i^\ell)_{i,\ell\geq1}$ along this convergent subsubsequence is $(\tilde \sigma_\lambda(v_i,x_{i,\ell}))_{i,\ell \geq1}$ (with $(v_i)_{i\geq1}$ and $(x_{i,\ell})_{i,\ell\geq1}$ all i.i.d. uniform in $[0,1]$) in terms of random measures.
\end{proof}

\begin{proof}[Proof of Corollary \ref{cor:asymp_indep_disthm}]
    Let $\{h_i\}_{i\in[k]}$ be as in the statement of the corollary and $(N_j)_{j\ge 1}$ be a subsequence of $N$. Because the spin variables are tight, we have that there exists some subsubsequence $(N_{j_m})_{m\ge 1}$ along which the spins $\sigma_1,\dots,\sigma_k$ (and their replicas) converge jointly in distribution, with asymptotic Aldous-Hoover representation $\sigma(w,u,v,x)$. 

    By the independence of $\sigma(w,u,v,x)$ on $w$ and $u$ (consequence of Corollary~\ref{cor:asymp_spin_dist}), we have that along this subsubsequence
    \begin{equation}
        \E \Big\langle{\prod_{j=1}^k h_j(\sigma_{j})}\Big\rangle - \prod_{j=1}^k \E\Big\langle{h_j(\sigma_{j})}\Big\rangle \xrightarrow{\rm d}  0\,,\label{meanh-hmean}
    \end{equation}
    where the convergence is in distribution over the regularisation $\lambda$. Because $(\mathbb{R},|\cdot|)$ is separable, convergence in distribution of the induced measure of this random variable is equivalent to convergence in Prokhorov's metric ${\rm d_P}(\cdot,\cdot)$. We have then proved that every subsequence has a subsubsequence s.t.
    \begin{equation*}
        \E \Big\langle{\prod_{j=1}^k h_j(\sigma_{j})}\Big\rangle - \prod_{j=1}^k \E \Big\langle{h_j(\sigma_{j})}\Big\rangle \xrightarrow{\rm d_P} 0\,.
    \end{equation*}
    Because the limit is always the same, we have the convergence along the entire sequence $N$. This means that as $N\to+\infty$ \eqref{meanh-hmean} holds. And finally, because all the factors $h_j(\sigma_{j})$ are bounded, this implies convergence in $L^2$.

\end{proof}

\section*{Appendix}

\subsection*{Proof of \eqref{pertIsnonPert}}

 Denote $f_N:=\e F_N/N$ and $f_N^{\rm pert}:= \e F_N^{\rm pert}(\lambda)/N$. By the triangle inequality $$|f_N^{\rm pert}(\lambda)-f_N|\le |f_N^{\rm pert}(\lambda_0,(\lambda_k)_{k\ge 1})-f_N^{\rm pert}(\lambda_0=0,(\lambda_k)_{k\ge 1})|+|f_N^{\rm pert}(\lambda_0=0,(\lambda_k)_{k\ge 1})-f_N^{\rm pert}(\lambda=(0))|$$ where $f_N^{\rm pert}(\lambda=(0))=f_N$ is the unperturbed normalised free energy. We know from \eqref{second-derivative-average} that
\begin{align*}
 \Big|\frac{df_N^{\rm pert}(\lambda)}{d\lambda_0} \Big|=\frac{\eps_N}2|\e\la R_{1,2}\ra|\le \frac{\eps_N}2\,.
\end{align*}
Therefore $|f_N^{\rm pert}(\lambda_0,(\lambda_k)_{k\ge 1})-f_N^{\rm pert}(\lambda_0=0,(\lambda_k)_{k\ge 1})|\le \lambda_0\eps_N/2\le \eps_N/2$. 

We now consider the second term. Let $k\ge 1$. We recall \eqref{der_freeEn_bounded} that says $$\Big|\frac{df_N^{\rm pert}(\lambda)}{d\lambda_k} \Big|\le \frac{6s_N}{N}\,.$$
Therefore $|f_N^{\rm pert}(\lambda_0=0,(\lambda_k)_{k\ge 1})-f_N^{\rm pert}(\lambda=(0))|\le (6s_N/N)\sum_{k\ge 1} \lambda_k\le 6s_N/N$ as $\lambda_k\in[2^{-k-1},2^{-k}]$. By hypothesis $\eps_N$ and $s_N/N$ both vanishe as $N$ grows, thus the result.

Another more direct way to see that $|f_N^{\rm pert}(\lambda)-f_N|=o_N(1)$ is to write a bound that is uniform in $\sigma$ for the perturbation $|\mathcal{H}^{\rm gauss}_N(\sigma,\lambda_0) + \mathcal{H}^{\rm exp}_N(\sigma,\lambda)|$ and use it to extract it from $f_N^{\rm pert}(\lambda)$, but this yields a weaker convergence of the order $|f_N^{\rm pert}(\lambda)-f_N|=O(\sqrt{\eps_N}+s_N/N)$.

\subsection*{Proof of inequality~\eqref{remarkable}}
Let $R_{1,*} :=\sigma\cdot \sigma^*/N$. We start by proving the identity
\begin{align}
-2\,\mathbb{E}\big\langle R_{1,*}(\mathcal{L} - \mathbb{E}\langle \mathcal{L}\rangle)\big\rangle
&=\mathbb{E}\big\langle (R_{1,*} - \mathbb{E}\langle R_{1,*} \rangle)^2\big\rangle
+ \mathbb{E}\big\langle (R_{1,*}-   \langle R_{1,*} \rangle)^2\big\rangle\,.\label{47}
\end{align}
Recall $\lambda_{0,N}:= \eps_N\lambda_0$. Using the definition \eqref{def_L} gives
\begin{align}
2\,\mathbb{E}\big\langle R_{1,*} (\mathcal{L} -\mathbb{E}\langle \mathcal{L} \rangle) \big\rangle
   =\, &  \mathbb{E} \Big [ \frac{1}{N}\big\langle R_{1,*} \|\sigma\|^2 \big\rangle - 2 \langle R_{1,*}^2 \rangle - \frac{1}{N\sqrt{\lambda_{0,N}}}\big\langle R_{1,*}\, Z \cdot \sigma \big\rangle \Big ] \nonumber \\
  &  \qquad-  \mathbb{E}\langle R_{1,*} \rangle  \, \mathbb{E} \Big [ \frac{1}{N}\big\langle \|\sigma\|^2 \big\rangle - 2 \langle R_{1,*} \rangle - \frac{1}{N\sqrt{\lambda_{0,N}}} Z \cdot \langle \sigma\rangle \Big ]\,. \label{eq:QL:1}
\end{align}
A gaussian integration by part yields
\begin{align*}
\frac{1}{N\sqrt{\lambda_{0,N}}}\mathbb{E}\big\langle R_{1,*} \,Z \cdot \sigma \big\rangle 
  & = \frac{1}{N}\mathbb{E}\big\langle R_{1,*} \|\sigma\|^2 \big\rangle - \frac{1}{N}\E\big\langle R_{1,*} \,\sigma \cdot \langle \sigma \rangle \big \rangle =\frac1{N}\mathbb{E}\big\langle R_{1,*} \|\sigma\|^2 \big\rangle - \E \langle R_{1,*}  \rangle^2\,.
\end{align*}
Fort the last equality we used the Nishimori identity as follows
$$
\frac1N\E\big\langle R_{1,*}\, \sigma \cdot \langle \sigma \rangle \big \rangle=\frac1{N^2}\E\big\langle (\sigma\cdot \sigma^*) (\sigma \cdot \langle \sigma \rangle) \big \rangle= \frac1{N^2}\E \big\langle (\sigma^*\cdot \sigma) (\sigma^* \cdot \langle \sigma \rangle) \big \rangle=\E \langle R_{1,*} \rangle^2\,.
$$ 
We have already proved $\mathbb{E} \langle Z\cdot  \sigma \rangle/\sqrt{\lambda_{0,N}} 
=  \mathbb{E}\langle \|\sigma\|^2 \rangle - \E \langle R_{1,*}\rangle$ in \eqref{NishiTildeZ}. Therefore \eqref{eq:QL:1} simplifies to 
\begin{align*}
2\,\mathbb{E}\big\langle R_{1,*} (\mathcal{L} -\mathbb{E}\langle \mathcal{L} \rangle) \big\rangle &= \mathbb{E}\langle R_{1,*}\rangle^2 - 2\,\mathbb{E}\langle R_{1,*}^2\rangle +  (\mathbb{E}\langle R_{1,*}\rangle)^2 \nonumber\\
&= -  \big ( \mathbb{E}\langle R_{1,*}^2\rangle - (\mathbb{E}\langle R_{1,*}\rangle)^2 \big ) -  \big ( \mathbb{E}\langle R_{1,*}^2\rangle - \mathbb{E}\langle R_{1,*}\rangle^2 \big )
\end{align*} 
which is identity \eqref{47}. This identity implies the inequality
\begin{align*}
2\big|\mathbb{E}\big\langle R_{1,*}(\mathcal{L} - \mathbb{E}\langle \mathcal{L}\rangle)\big\rangle\big|&=2\big|\mathbb{E}\big\langle (R_{1,*}-\mathbb{E}\langle R_{1,*} \rangle)(\mathcal{L} - \mathbb{E}\langle \mathcal{L}\rangle)\big\rangle\big|\ge \mathbb{E}\big\langle (R_{1,*} - \mathbb{E}\langle R_{1,*} \rangle)^2\big\rangle
\end{align*}
and an application of the Cauchy-Schwarz inequality gives
\begin{align*}
2\big\{\mathbb{E}\big\langle (R_{1,*}-\mathbb{E}\langle R_{1,*} \rangle)^2\big\rangle\, \mathbb{E}\big\langle(\mathcal{L} - \mathbb{E}\langle \mathcal{L}\rangle)^2\big\rangle \big\}^{1/2} \ge\mathbb{E}\big\langle (R_{1,*} - \mathbb{E}\langle R_{1,*} \rangle)^2\big\rangle:= {\rm Var}(R_{1,*})\,.
\end{align*}
Finally using the consequence of the Nishimori identity ${\rm Var}(R_{1,*})={\rm Var}(R_{1,2})$ ends the proof.

\subsection*{Asymptotic multioverlaps in terms of the Aldous-Hoover representation} Let us start by showing that $\la\,\cdot\,\ra$ asymptotically becomes the expectation in the random variables $(u_\ell)$, $(x_{i,\ell})$. Consider a generic joint moment of the quenched Gibbs measure over finitely many spins and replicas, where spins are grouped according to their replica index. Using the Aldous-Hoover representation \eqref{AHrepr} these asymptotically become, in the considered subsequential limit, 
\begin{align*}
 \e \big\la \prod_{\ell \le n} \prod_{i \in \mathcal{C}_\ell} \sigma_i^\ell \big\ra= \e \prod_{\ell \le n} \big\la \prod_{i \in \mathcal{C}_\ell} \sigma_i^\ell \big\ra \to  \e \prod_{\ell \le n}  \prod_{i \in \mathcal{C}_\ell} \sigma(w,u_\ell,v_i,x_{i,\ell})=\e_{w,(v_i)} \prod_{\ell \le n} \e_{u_\ell, (x_{i,\ell})_{i\in \mathcal{C}_\ell}}  \prod_{i \in \mathcal{C}_\ell} \sigma(w,u_\ell,v_i,x_{i,\ell})
\end{align*}
where $w$, $(u_\ell)$, $(v_i)$ and $(x_{i,\ell})$ are i.i.d. uniform $\mathcal{U}[0,1]$ random variables. By identification we get that for a given replica $\sigma^\ell$ the expectation $\la\,\cdot\,\ra$ asymptotically translates into the expectation with respect to $u_\ell$ and $(x_{i,\ell})_{i\in \mathcal{C}_\ell}$, so in general for a function of multiple replicas $\la\,\cdot\,\ra$ becomes the expectation over all the variables indexed by a ``replica index'' $(u_\ell)$, $(x_{i,\ell})$.

Next we prove identity \eqref{asymptMultiover}.
Let us consider a generic (finite) multioverlaps joint moment. Define sets $\{\mathcal{L}_i\}_{i\ge 1}$, whose only finitely many of them are non empty, where $\mathcal{L}_i$ is a finite set of replica indices corresponding to the replicas whose $i$th spin appears in the considered multioverlaps joint moment. Recall that multioverlaps joint moments can be reduced to a product over spins $(\sigma_i^\ell)$ as already observed in \eqref{multi_are_spins}. Let us write multioverlaps as $R_{\mathcal{L}_i}:=\e_{i_j} \prod_{\ell\in\mathcal{L}_i}\sigma^\ell_{i_j}$ where $i_j$ is uniform among $\{1,\ldots,N\}$. Defining an empty product to be one $\prod_{\emptyset}(\cdots):=1$, a generic multioverlaps joint moment reads
\begin{align*}
  \e\big\la \prod_{i\ge 1} R_{\mathcal{L}_i}\big\ra = \e\big\la \prod_{i\ge 1}\e_{i_j}\prod_{\ell\in \mathcal{L}_i}\sigma_{i_j}^{\ell}\big \ra= \e\big\la \prod_{i\ge 1}\prod_{\ell\in \mathcal{L}_i}\sigma_{i_j}^{\ell}\big \ra +\mathcal{O}(N^{-1})
\end{align*}
where the last equality from the symmetry among spins \eqref{spinSymm}. Then
\begin{align*}
  \e\big\la \prod_{i\ge 1}\prod_{\ell\in \mathcal{L}_i}\sigma_{i_j}^{\ell}\big \ra\to \e_{w,(u_\ell)} \prod_{i\ge 1} \e_{v_{i}} \prod_{\ell \in\mathcal{L}_i}\e_{x_{i,\ell}} \sigma(w,u_\ell,v_{i},x_{i,\ell})=\e_{w,(u_\ell)} \prod_{i\ge 1} \int_0^1 \prod_{\ell \in\mathcal{L}_i}\bs(w,u_\ell,v)\,dv\,.
\end{align*}
By identification we obtain the claimed identity \eqref{asymptMultiover}.

\section*{Acknowledgements}
J.B. is extremely grateful to Nicolas Macris and Chun-Lam Chan for numerous discussions. D.P. was partially supported by NSERC and Simons Fellowship.

\bibliographystyle{plain}  
\bibliography{refs}

\end{document}